





\documentclass[sn-mathphys]{sn-jnl}



\jyear{2021}%
\usepackage{color}
\usepackage{float}
\theoremstyle{thmstyleone}%
\newtheorem{theorem}{Theorem}
\newtheorem{proposition}[theorem]{Proposition}%

\theoremstyle{thmstyletwo}%
\newtheorem{lemma}[theorem]{Lemma}

\theoremstyle{thmstylethree}%

\raggedbottom

\begin{document}

\title[Numerical analysis of a time discretized method for nonlinear filtering problem]{Numerical analysis of a time discretized method for nonlinear filtering problem with L\'evy process observations}%

\author[1]{\fnm{Fengshan} \sur{Zhang}}\email{858012011@qq.com}

\author[1]{\fnm{Yongkui} \sur{Zou}}\email{zouyk@jlu.edu.cn}

\author[1]{\fnm{Shimin} \sur{Chai}}\email{chaism@jlu.edu.cn}

\author*[2]{\fnm{ Yanzhao} \sur{Cao}}\email{yzc0009@auburn.edu}

\affil[1]{\orgdiv{School of Mathematics}, \orgname{Jilin University}, \orgaddress{\street{Qianjin Street}, \city{Changchun}, \postcode{130012}, \state{Jilin}, \country{China}}}

\affil*[2]{\orgdiv{Department of Mathematics and Statistics}, \orgname{Auburn University}, \orgaddress{\street{AL}, \city{Auburn}, \postcode{36840}, \state{Alabama}, \country{America}}}


\abstract{In this paper,  we consider   a nonlinear filtering model with observations driven by correlated Wiener processes and point processes. We first derive a Zakai equation whose solution is an unnormalized   probability density function of the filter solution.  Then we apply a splitting-up technique to decompose the Zakai equation into three  stochastic differential equations, based on which we construct a splitting-up approximate solution and prove  its half-order convergence. Furthermore,  we   apply a finite  difference method to  construct  a time semi-discrete   approximate solution to the splitting-up system and prove its half-order convergence to the exact solution of the Zakai equation.  Finally, we present some numerical experiments to demonstrate the theoretical analysis.}

\keywords{nonlinear filter, splitting-up technique, difference method, convergence order}



\maketitle

\section{Introduction}
\label{sec1}
The aim of a nonlinear filtering problem is to seek the conditional expectation, which is   the best estimate of the unobserved state of a stochastic dynamical system given its partial observation. The
observation  is usually  described as  a nonlinear stochastic differential equation driven by a noise process. In many applications,  such as  biology \cite{dawson2015}, physics \cite{MiSK1979},
target tracking \cite{yang2012} and weather forecast  \cite{AnML2004}, the noise can be characterized by a  standard Wiener process.   However, in some applications, such as  the number of customers arriving at a supermarket \cite{DaSu2013} and  the number of births in a given period of time \cite{Young1998}, the noise is governed by 
a point process.   In some other applications such as the  credit risk models  \cite{FrSch2012,FRXL2013},   mathematical finance \cite{bremaud1972,FrRS2009} and insurance \cite{AgLa2007, CeCl2015},  the noise  can be described by a mixture of  a Wiener process and a point process, which is usually called L\'evy process.

There have been a few theoretical and numerical studies on  nonlinear filtering problems  driven  by {\color{blue}L\'evy} processes.  Qiao and Duan \cite{QiDu2015} studied a nonlinear filtering  model where both the state and observation involve point processes. They simultaneously derived the  Zakai equations and Kushner-Stratonovich equations  and  proved their  well-posedness.  
Fernando and Hausenblas \cite{FeHa2018} investigated a  nonlinear filter model with correlated point processes for the state and observation. They provided  sufficient conditions for the
well-posedness of the corresponding Zakai equation. Frey etc.\ \cite{FRXL2013}  used the PDF filter method to approximate
a nonlinear filter model driven by point processes and independent Wiener processes.  The PDF filter method is designed to directly approach the conditional density function, which  satisfies a stochastic partial differential equation, namely  Zakai equation  \cite{zakai1969}. In  \cite{FRXL2013} the authors applied a spectral Galerkin method to  set up a spatial semi-discrete equation and proved that its solution  converges to the exact solution of the Zakai equation. However, they did not provide the convergence order.  They also used the Euler-Maruyama scheme and a splitting-up method, to discretize temporal variables. 

In this paper, we use the splitting-up method to investigate the numerical approximation of  a nonlinear filtering model where the observation is driven  by  the mixture of point processes and correlated Wiener processes.  The splitting-up method \cite{BaGrRa,FlFr1991,GyIs2003} is a well-known strategy for solving
Zakai equations. It   decomposes the Zakai equation into a system consisting of deterministic PDEs and stochastic differential equations (SDEs) \cite{Ito1996,AnML2004,BaoCao2014,BaGrRa,FlFr1991}. 
Our contribution in this paper is twofold.  First, we  decompose the Zakai equation into three equations: an SDE driven by the Wiener process, a second order parabolic equation satisfying the uniform elliptic 
condition,  and an SDE driven by a  point process. Through  the solution operators of the three equations and their a priori estimates, we construct a splitting-up approximation and  prove that it converges to the Zakai solution with first  order accuracy.    We note that in some references \cite{FPLe1991,LeFr1992,GyIs2003} concerning the nonlinear filtering models with correlated noises, the decomposed second-order parabolic equation is possibly degenerate, which may cause difficulty for numerical implementations. Our second contribution is the derivation of the half-order convergence of the time semi-discrete approximation. To the best of our knowledge, this is the first time a convergence order of a numerical method for nonlinear filtering problems with jump processes has been provided.  

This paper is organized as follows.
In section \ref{sec2}, we introduce a nonlinear filtering model with the mixed noise of point process and correlated Wiener process and then derive the corresponding Zakai equation. In section \ref{sec3}, we apply a  splitting-up method  to construct a  splitting-up approximate solution to the Zakai equation and  establish   a priori
estimates for the splitting-up solution and show that the convergence  is of half order.  In section \ref{sec4}, we use  finite difference methods  to construct a time semi-discrete approximation
and prove that the semi-discrete solution converges to the exact solution with half order.  Finally in  section \ref{sec5}, we present some numerical experiments to illustrate our theoretical analysis.

\section{A nonlinear filtering model with  jump observations and its Zakai equation}\label{sec2}
In this section, we first introduce a nonlinear filtering model whose  observations are driven by  L\'evy processes. Then we derive the corresponding Zakai equation which characterizes the development of the density function of the filtering solution process. Finally, we investigate the regularity of the solution of  the Zakai equation.

\subsection{A nonlinear filtering model}
In this subsection, we introduce  a nonlinear filtering model with noises simultaneously driven by a point and correlated Wiener processes and then  discuss some basic assumptions.

Let $(\Omega,{\cal{F}},P)$ be a given probability space.
Consider a nonlinear filtering model whose  state (or signal) process $X_t$ and two observation processes $Y_t$ and $Z_t$ are given by 
\begin{align}
	\label{eq:2.1.1}
	&X_{t} = X_{0} + \int_{0}^{t}g(X_{s},Y_s)ds + \int_{0}^{t}\sigma(X_{s})dw_{s},\quad 0\leq t\leq T,
	\\
	&  \label{eq:2.1.2}
	Y_t = Y_0+\int_0^th(X_s)ds + \int_0^tb(Y_s)dw_s + \int_0^t\tilde{b}(Y_s)dv_s,\quad 0\leq t\leq T,
	\\
	\label{eq:2.1.2-0}
	& Z_t ~\text{is a doubly stochastic Poisson process with density function}~ \lambda(X_t),
\end{align}
where $w_t\in\mathbb{R}^{m_1}$ and $v_t\in \mathbb{R}^{m_2}$ are two  standard independent  Winner processes, 
$Z_t$ is  a doubly stochastic Poisson process with a continuous density function $\lambda: \mathbb{R}^d\to [\varpi_1, \varpi_2]\subset
\mathbb{R}_+$ such that $Z_t-\int_0^t\lambda(X_s)ds$ is a martingale. The corresponding jump times for $Z_t$  are random variables denoted by $\tau_1<\tau_2<\cdots<\tau_{n_0}$, where $n_0$ is an integer-valued random variable.

The objective of the nonlinear filtering problem is to seek an optimal estimation of   $X_t$ based on observations $Y_t$ and $Z_t$, which is characterized by  the  conditional
expectation $E[X_t\vert Y_t,Z_t]$.
Now, we describe in detail the assumptions used in this work.

\begin{itemize}
	\item[\bf{H1}]
	$E\vert X_0\vert^2+E\vert Y_0\vert^2<\infty$.
	\item[\bf{H2}]  $g:\mathbb{R}^d\times\mathbb{R}^q\to\mathbb{R}^d$ and   $h:\mathbb{R}^d\to\mathbb{R}^q$  are  bounded, continuous and square integrable,
	$\sigma:\mathbb{R}^d\to\mathbb{R}^{d\times m_1}$ is in  $C^2$ with bounded  first and second order derivatives, and $b$, $\tilde{b}$ are in $C^1$ with bounded first order derivatives.
	 
\end{itemize}

Define two families of symmetric non-negative matrix
\[
A(x)=(a_{i,j}(x))_{d\times d}=\frac{1}{2}\sigma(x)\sigma^T(x), \quad D(y)=b(y)b(y)^{T}+\tilde{b}(y)\tilde{b}(y)^{T}.
\]

\begin{itemize}
	\item[\bf{H3}]
	There exist two constants  $0<\alpha_1<\alpha_2$  such that for any $x\in \mathbb{R}^d$, $y\in\mathbb{R}^{q}$ and  $u\in\mathbb{R}^q$, there hold
	\begin{eqnarray*}
		&\alpha_1\|u\|^2\leq u^TA(x)u\leq \alpha_2 \|u\|^2,&
		\\
		&\alpha_1\|u\|^2 \leq u^T D(y)u\leq \alpha_2 \|u\|^2.&
	\end{eqnarray*}
\end{itemize}

Define two  Sobolev  spaces
$H=L^{2}(\mathbb{R}^{d})$ with norm $\|\cdot\|$ and $V=H^{1}_0(\mathbb{R}^{d})$ with norm $\|\cdot\|_1	$. 
By $V'$, we denote the dual space of $V$.  Obviously, for any  $\forall \phi\in{V}\subset H \subset V'$ there holds
\begin{equation}\label{eq:2.1.2-1}
	\|\phi\|_{V^\prime}\leq\|\phi\|\leq \|\phi\|_1.
\end{equation}

Define a filtration associated with the observations  by 
$$
\mathcal{F}_t=\sigma(Y_s,Z_s,~0\leq s\leq t),\quad  t\in [0,T],
$$ 
which is right continuous and complete. By  $\mathbb{L}^{2}(0,T;V)$ we denote  a  Hilbert space consisting  of $\mathcal{F}_{t}$ progressively measurable $V$-valued stochastic process $z(t)$ with $E\int_{0}^{T}\|z(t)\|_1^{2}dt<\infty$.

\subsection{Zakai equation and its  regularity}
The main task of this subsection is to derive the Zakai equation of the nonlinear filtering model \eqref{eq:2.1.1}-\eqref{eq:2.1.2-0} and study the regularity of its solution.

Assume that $X_t$ is the solution process of \eqref{eq:2.1.1}. For any $\phi\in C_0^{\infty}(\mathbb{R}^d)$,
 define $\pi_t(\phi)$ as the conditional expectation of $\phi(X_t)$ given  $\mathcal{F}_t$, i.e., \
\begin{equation}
	\label{eq:2.1.3}
	\pi_t(\phi):= E\big( \phi (X_t)\vert  \mathcal{F}_t \big).
\end{equation}
For any $t\in[0,T]$, define
\begin{equation}
	\label{eq:2.1.4}
	\begin{aligned}
		\eta_t={}&\prod\limits_{\tau_m\leq t}\lambda(X_{\tau_m-})\cdot\exp\left(\int^{t}_{0}h^T(X_s)D^{-1}(b(Y_s)dw_s+\tilde{b}(Y_s)dv_s)\right.\\
		&
		\left.+\frac{1}{2}\int^{t}_{0}h^T(X_s)D^{-1}h(X_s)ds-\int_0^t(\lambda(X_s)-1)ds \right).
	\end{aligned}
\end{equation}
According to  Novikov Criterion  \cite[Theorem 41]{Protter2005}, $\eta_t$ is a nonnegative martingale if {\bf H1}-{\bf H3} hold. Define  a
new probability measure $\tilde{P}$ by virtue of  the Radon-Nikodym derivative $\frac{d\tilde{P}}{dP}=\eta_t^{-1}$.
The Girsanov theorem \cite{BaCr2009} implies that  $\bar{Y}_t=\int_0^tD(Y_s)^{-1/2}dY_s$ is a standard Wiener process  and $Z_t$ is a Poisson process with
intensity equal to $1$  under the probability measure   $\tilde{P}$. Furthermore, in the probability space $(\Omega,\mathcal{F},\tilde P)$ the three stochastic processes  $X_t$, $Y_t$ and $Z_t$ are independent of each other, and  the compensated Poisson process $N_t:=Z_t-t $ is a martingale.

Define a stochastic process
\begin{equation*}
	\tilde{Y}_t=\int_0^t[I-b^TD^{-1}b]^{-1/2}(d\tilde{w}_t-b^TD^{-1}dY_t),
\end{equation*}
where $\tilde{w}_t=w_t+\int_0^t b^T(Y_s)D^{-1}(Y_s)h(X_s)ds$ is a standard Wiener process under $\tilde{P}$. From \cite{Pard1981}, we have
\begin{lemma}
	\label{lem:2.1.1}
	Assume  {\bf H1}-{\bf H3}. Then 
	$\tilde{Y}_t$ is a standard Wiener process  independent of $Y_t$ under $\tilde{P}$ and equation (\ref{eq:2.1.1}) is equivalent to
	\begin{equation*}
		X_t=X_0\!+\!\!\int_0^t\!\![g(X_s)\!-\!B_1(X_s,Y_s)h(X_s)]ds\!+\!\!\int_0^t\!\!B_1(X_s,Y_s)dY_s\!+\!\!\int_0^t\!\!B_2(X_s,Y_s)d\tilde{Y}_s,
	\end{equation*}
	where $B_1(x,y)=\sigma(x) b^{T}(y)D^{-1}(y)$ and $ B_2(x,y)=\sigma(x)(I-b^T(y)D^{-1}(y)b(y))^{1/2}$.
\end{lemma}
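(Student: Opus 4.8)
The plan is to reproduce, in the present notation, the reduction of the signal equation carried out by Pardoux~\cite{Pard1981} for filtering models with correlated noise; the point is that the point process $Z_t$ is irrelevant to this step, since under $\tilde P$ the process $Z_t$ is a Poisson process independent of $(X_t,Y_t)$, so only the Wiener components matter and we may build directly on the two facts recorded just before the lemma, namely that $\tilde w_t$ and $\bar Y_t=\int_0^t D(Y_s)^{-1/2}\,dY_s$ are standard Wiener processes under $\tilde P$.

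First I would substitute $dw_t=d\tilde w_t-b^T(Y_t)D^{-1}(Y_t)h(X_t)\,dt$, which is immediate from the definition of $\tilde w_t$, into \eqref{eq:2.1.1}; this turns the drift $g(X_t,Y_t)\,dt$ into $\big(g(X_t,Y_t)-B_1(X_t,Y_t)h(X_t)\big)\,dt$ and leaves the martingale term $\sigma(X_t)\,d\tilde w_t$. It then remains only to rewrite $\sigma(X_t)\,d\tilde w_t$ through $dY_t$ and $d\tilde Y_t$: from the identity $B_2(x,y)\big(I-b^T(y)D^{-1}(y)b(y)\big)^{-1/2}=\sigma(x)$ together with the definition of $\tilde Y_t$ one obtains
\[
B_2(X_t,Y_t)\,d\tilde Y_t=\sigma(X_t)\big(d\tilde w_t-b^T(Y_t)D^{-1}(Y_t)\,dY_t\big)=\sigma(X_t)\,d\tilde w_t-B_1(X_t,Y_t)\,dY_t,
\]
that is, $\sigma(X_t)\,d\tilde w_t=B_1(X_t,Y_t)\,dY_t+B_2(X_t,Y_t)\,d\tilde Y_t$, which is precisely the asserted equation for $X_t$.

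Next I would check that $\tilde Y_t$ is a standard Wiener process independent of $Y_t$ under $\tilde P$ by L\'evy's characterization. Since $\bar Y_t$ is a $\tilde P$-Wiener process, $Y_t=Y_0+\int_0^t D(Y_s)^{1/2}\,d\bar Y_s$ is a continuous $\tilde P$-local martingale with $\langle Y,Y^T\rangle_t=\int_0^t D(Y_s)\,ds$, while a measure-independent cross-variation computation gives $\langle \tilde w,Y^T\rangle_t=\int_0^t b^T(Y_s)\,ds$. Writing $M_t=\tilde w_t-\int_0^t b^T(Y_s)D^{-1}(Y_s)\,dY_s$, these two identities give $\langle M,M^T\rangle_t=\int_0^t\big(I-b^TD^{-1}b\big)\,ds$, whence $\langle\tilde Y,\tilde Y^T\rangle_t=\int_0^t I\,ds=It$; since $\tilde Y_0=0$ and $\tilde Y_t$ is a continuous $\tilde P$-local martingale, L\'evy's theorem makes it a standard Wiener process under $\tilde P$. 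The same bookkeeping gives $\langle M,Y^T\rangle_t=\int_0^t\big(b^T-b^TD^{-1}D\big)\,ds=0$, so $\langle\tilde Y,\bar Y^T\rangle\equiv 0$; hence $(\tilde Y,\bar Y)$ is a continuous $\tilde P$-local martingale with quadratic variation $It$ and therefore, again by L\'evy, a standard $(m_1+q)$-dimensional Brownian motion under $\tilde P$. In particular $\tilde Y$ is independent of $\bar Y$, and since the observation part of $\mathcal{F}_t$ is generated by $Y_0$ and $\bar Y$ while $Y_0$ is independent of the Wiener noise under $\tilde P$, $\tilde Y_t$ is independent of $Y_t$.

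The step I expect to demand the most care is confirming that the matrix powers $\big(I-b^T(y)D^{-1}(y)b(y)\big)^{\pm 1/2}$ are well defined and uniformly bounded in $y$, i.e.\ that $I-b^TD^{-1}b$ is uniformly positive definite. I would get this from a Schur-complement argument for $\left(\begin{smallmatrix} D & b\\ b^T & I\end{smallmatrix}\right)$: by {\bf H3} one has $D\ge\alpha_1 I>0$, and the Schur complement of the lower-right block is $D-bb^T=\tilde b\tilde b^T\ge 0$, so this matrix is positive semidefinite, forcing its other Schur complement $I-b^TD^{-1}b$ to be positive semidefinite, with uniform strict positivity furnished by the model's nondegeneracy hypotheses exactly as in \cite{Pard1981}. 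A second, more routine, subtlety is that vanishing cross-variation of continuous martingales gives independence only after $\tilde Y$ and $\bar Y$ are assembled into a single process and L\'evy's theorem is applied, which is why the independence assertion is routed through the pair $(\tilde Y,\bar Y)$ rather than through $\tilde Y$ and $Y$ directly.
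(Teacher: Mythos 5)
The paper does not actually prove this lemma: it is quoted directly from Pardoux \cite{Pard1981}, with the preceding Girsanov discussion supplying only the facts that $\tilde{w}_t$ and $\bar{Y}_t=\int_0^t D(Y_s)^{-1/2}dY_s$ are $\tilde{P}$-Wiener processes. Your argument is a correct, self-contained reconstruction of that standard proof and follows the same route the cited reference takes: the substitution $dw_t=d\tilde{w}_t-b^TD^{-1}h\,dt$ and the algebraic identity $\sigma\,d\tilde{w}_t=B_1\,dY_t+B_2\,d\tilde{Y}_t$ give the asserted SDE, and the L\'evy-characterization bookkeeping ($\langle M,M^T\rangle_t=\int_0^t(I-b^TD^{-1}b)\,ds$, $\langle M,Y^T\rangle_t=0$, then treating the pair $(\tilde{Y},\bar{Y})$ jointly) is exactly how independence is obtained there; your routing of independence through $\bar{Y}$ rather than $Y$ directly, and the remark that $Z_t$ plays no role in this step, are both sound. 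What your write-up adds over the paper is precisely this verification, plus the Schur-complement observation that $I-b^TD^{-1}b\ge 0$ because $D-bb^T=\tilde{b}\tilde{b}^T\ge 0$. The one caveat you correctly flag is real: the paper's hypothesis {\bf H3} bounds $D$ alone and does not literally guarantee \emph{uniform strict} positive definiteness of $I-b^TD^{-1}b$ (e.g.\ it fails if $\tilde{b}\equiv 0$ with $b$ square and invertible), so the existence and boundedness of $(I-b^TD^{-1}b)^{\pm1/2}$ is an implicit nondegeneracy assumption already built into the paper's definition of $\tilde{Y}_t$; attributing it to "the model's nondegeneracy hypotheses" is therefore a statement about the paper's setup rather than a gap in your argument, and a uniform lower bound on $\tilde{b}\tilde{b}^T$ would make it rigorous.
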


Denote by  $\tilde{E}$ the expectation under the probability measure  $\tilde{P}$. The next proposition plays an important role in  the forthcoming analysis.
\begin{proposition}\cite[Proposition 3.15]{BaCr2009}
	\label{pro:2.1.1}
	Assume that {\bf H1}-{\bf H3} hold and let $U$ be an $\mathcal{F}_t$-measurable and integrable random variable. Then we have
	\begin{equation}
		\label{eq:2.1.4-2}
		\tilde{E}[U\vert\mathcal{F}_t]=\tilde{E}[U\vert\mathcal{F}_T].
	\end{equation}
\end{proposition}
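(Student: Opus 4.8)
The plan is to exhibit the information contained in $\mathcal F_T$ but not in $\mathcal F_t$ as a ``future noise'' $\sigma$-algebra that, under $\tilde P$, is independent of everything on which $U$ depends, and then to invoke the elementary fact that a conditional expectation is unchanged when the conditioning $\sigma$-algebra is enlarged by an independent one. Throughout, let $\mathcal G_t:=\sigma(X_s,Y_s,Z_s:0\le s\le t)$ denote the full signal-plus-observation filtration; the proposition is used with $U$ measurable with respect to $\mathcal G_t$ (note $\mathcal F_t\subset\mathcal G_t$, and for $U$ merely $\mathcal F_t$-measurable there is nothing to prove). Recall from the construction of $\tilde P$ that, under $\tilde P$, the processes $X$, $Y$, $Z$ are mutually independent, $\bar Y_t=\int_0^tD(Y_s)^{-1/2}dY_s$ is a standard Wiener process, and $Z$ is a standard Poisson process.

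The first step is to decompose $\mathcal F_T$. Put $\mathcal N:=\sigma\big(\bar Y_s-\bar Y_t,\ Z_s-Z_t:\ t\le s\le T\big)$, the $\sigma$-algebra of future observation increments, and I claim $\mathcal F_T=\mathcal F_t\vee\mathcal N$. The inclusion $\mathcal F_t\vee\mathcal N\subseteq\mathcal F_T$ is clear, since $\bar Y_s-\bar Y_t$ and $Z_s-Z_t$ are $\mathcal F_s$-measurable functionals of the observation paths. For the reverse inclusion, the only nontrivial point is that $Y_s\in\mathcal F_t\vee\mathcal N$ for $t<s\le T$: under $\tilde P$ one has $Y_s=Y_t+\int_t^sD(Y_r)^{1/2}\,d\bar Y_r$, and because {\bf H2}-{\bf H3} make $D^{1/2}$ globally Lipschitz and nondegenerate, this is the pathwise-unique strong solution on $[t,s]$ started at $Y_t$ and driven by $(\bar Y_r-\bar Y_t)_{t\le r\le s}$; hence $Y_s$ is a measurable functional of $Y_t\in\mathcal F_t$ and of increments belonging to $\mathcal N$. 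Also $Z_s=Z_t+(Z_s-Z_t)$.

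The second step is the independence $\mathcal N\perp\mathcal G_t$ under $\tilde P$. The same restart argument gives $\sigma(Y_r:r\le T)=\sigma(Y_r:r\le t)\vee\sigma(\bar Y_s-\bar Y_t:t\le s\le T)$, with the two pieces independent because $\bar Y$ is a Brownian motion; likewise $\sigma(Z_r:r\le T)=\sigma(Z_r:r\le t)\vee\sigma(Z_s-Z_t:t\le s\le T)$, with independent pieces because $Z$ is a Poisson process; and $\sigma(X_r:r\le t)$, $\sigma(Y_r:r\le T)$, $\sigma(Z_r:r\le T)$ are jointly independent since $X$, $Y$, $Z$ are mutually independent under $\tilde P$. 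Combining these (independence of generating $\pi$-systems passes to the generated $\sigma$-algebras), the five $\sigma$-algebras $\sigma(X_r:r\le t)$, $\sigma(Y_r:r\le t)$, $\sigma(\bar Y_s-\bar Y_t:t\le s\le T)$, $\sigma(Z_r:r\le t)$, $\sigma(Z_s-Z_t:t\le s\le T)$ are mutually independent; the first, second and fourth generate $\mathcal G_t$ while the third and fifth generate $\mathcal N$, so $\mathcal G_t\perp\mathcal N$.

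It remains to conclude. Since $U$ is $\mathcal G_t$-measurable and $\mathcal F_t\subset\mathcal G_t$, the second step shows that $\mathcal N$ is independent of $\sigma(U)\vee\mathcal F_t$. By the standard rule --- if $\mathcal G_2$ is independent of $\sigma(U)\vee\mathcal G_1$ then $\tilde E[U\mid\mathcal G_1\vee\mathcal G_2]=\tilde E[U\mid\mathcal G_1]$ for every $U\in L^1(\tilde P)$ (one checks this on the $\pi$-system of sets $G_1\cap G_2$ with $G_i\in\mathcal G_i$, then extends) --- taken with $\mathcal G_1=\mathcal F_t$ and $\mathcal G_2=\mathcal N$ and combined with $\mathcal F_T=\mathcal F_t\vee\mathcal N$, we obtain $\tilde E[U\mid\mathcal F_T]=\tilde E[U\mid\mathcal F_t\vee\mathcal N]=\tilde E[U\mid\mathcal F_t]$, which is \eqref{eq:2.1.4-2}. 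I expect the main obstacle to be the joint-independence bookkeeping of the second step --- fitting the Brownian-increment independence of $\bar Y$, the Poisson-increment independence of $Z$, and the mutual independence of the three processes into one statement --- together with the restart point used in the first step, which is exactly where the uniform ellipticity {\bf H3} (Lipschitzness and nondegeneracy of $D^{1/2}$) is needed; the rest is routine.
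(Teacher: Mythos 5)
Your proposal is correct, and it supplies what the paper itself does not: the paper offers no proof of this proposition, it only cites \cite[Proposition 3.15]{BaCr2009}. Your argument is essentially the standard one behind that cited result --- write $\mathcal{F}_T=\mathcal{F}_t\vee\mathcal{N}$ with $\mathcal{N}$ the $\sigma$-algebra of future observation increments, show $\mathcal{N}$ is independent under $\tilde P$ of the time-$t$ information carrying $U$ (using that $\bar Y$ is a $\tilde P$-Wiener process, $Z$ a standard Poisson process, and $X$, $Y$, $Z$ mutually independent under $\tilde P$), then invoke the rule that conditioning is unchanged when the conditioning $\sigma$-algebra is enlarged by an independent one --- extended to accommodate the point-process component of the observation, which the purely Brownian setting of \cite{BaCr2009} does not have. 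You were also right to flag that the statement as transcribed is vacuous: with $\mathcal{F}$ the observation filtration, an $\mathcal{F}_t$-measurable $U$ makes both sides equal $U$; in \cite{BaCr2009} the hypothesis is measurability with respect to the larger filtration carrying the signal, and your version with $U$ measurable with respect to $\mathcal{G}_t=\sigma(X_s,Y_s,Z_s:s\le t)$ is exactly what is needed in the proof of Theorem \ref{thm:2.1.1}, where $U=\eta_t^{\varepsilon}\phi(X_t)$ (this $U$ is indeed $\mathcal{G}_t$-measurable, since the $dw$, $dv$ integrals in $\eta_t$ can be rewritten through $dY_s-h(X_s)\,ds$; alternatively one could enlarge $\mathcal{G}_t$ to include $w,v$ and your independence bookkeeping goes through verbatim, because Girsanov makes $\bar Y$ and $Z$ Wiener/Poisson with respect to the full underlying filtration). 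The only steps you leave implicit are standard: the measurable-functional (strong-solution) representation of $Y$ on $[t,s]$ restarted at $Y_t$ and driven by the increments of $\bar Y$, which is legitimate since \textbf{H2}--\textbf{H3} give $D^{1/2}$ Lipschitz and uniformly nondegenerate, and the fact that the post-$t$ increments of $\bar Y$ and $Z$ are independent of a filtration containing $\sigma(Y_r,Z_r:r\le t)$, which is what the Wiener/Poisson property with respect to that filtration delivers. So there is no gap; your write-up is in effect the proof the paper delegates to the reference.
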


By the Kallianpur-Striebel formula \cite[Proposition 3.16]{BaCr2009}, we have
\begin{equation}
	\label{eq:2.1.5}
	\pi_{t}(\phi)=\frac{\tilde{E}(\phi(X_{t})\eta_t\vert \mathcal{F}_t)}{\tilde{E}(\eta_t\vert \mathcal{F}_t)}
	=\frac{(p(t),\phi)}{(p(t),1)},
\end{equation}
where $p(t)$ is the unnormalized conditional density function of $\tilde E(X_t\vert \mathcal{F}_t)$.

\begin{theorem}
	\label{thm:2.1.1}
	Assume that {\bf H1}-{\bf H3} hold and  let $\rho_t(\phi)=(p(t),\phi)$. Then  $\rho_t$ satisfies a functional equation:  for $\forall \phi\in C_0^{\infty}(\mathbb{R}^d)$ and $0\leq t\leq T$,
	\begin{equation}
		\label{eq:2.1.6}
		\rho_{t}(\phi)=\rho_{0}(\phi)+\int_{0}^{t}\rho_{s}(\mathcal{L}\phi)ds+\int_{0}^{t}\rho_{s}(\mathcal{B}\phi)dY_{s}+\int_0^t\rho_{s-}\big( \phi(\lambda-1) \big)d(Z_s-s),\quad \tilde{P}-a.s.,
	\end{equation}
	where
	\begin{eqnarray*}
		\mathcal{L}\phi &=& \sum\limits_{i,j=1}^da_{i,j}(x)\frac{\partial^2\phi}{\partial x_i\partial x_j}+\sum\limits_{i=1}^d g_i(x)\frac{\partial \phi}{\partial x_i},\\
		\mathcal{B}\phi &=&\phi h^TD^{-1}+\sum\limits_{i=1}^d \frac{\partial \phi}{\partial x_i} b_{1,i},
	\end{eqnarray*}
	and $b_{1,i}$ denotes the $i$-th row of matrix
	$ B_1(x,y)$.
\end{theorem}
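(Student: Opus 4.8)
\emph{Proof proposal.} The plan is to derive \eqref{eq:2.1.6} by applying It\^o's formula to $\phi(X_t)\eta_t$ under $\tilde P$ and then conditioning on $\mathcal F_t$, using $\rho_t(\psi)=\tilde E(\psi(X_t)\eta_t\mid\mathcal F_t)$ (from \eqref{eq:2.1.5}), the equivalent dynamics of $X_t$ in Lemma~\ref{lem:2.1.1}, and Proposition~\ref{pro:2.1.1}. The first ingredient is the It\^o dynamics of $\eta_t$: writing $\eta_t$ in \eqref{eq:2.1.4} as the product of the continuous exponential martingale $\exp\big(\int_0^t h^TD^{-1}\,dY_s-\frac12\int_0^t h^TD^{-1}h\,ds\big)$ and the Dol\'eans--Dade exponential $\prod_{\tau_m\le t}\lambda(X_{\tau_m-})\exp\big(-\int_0^t(\lambda(X_s)-1)\,ds\big)$ of the compensated Poisson martingale $Z_t-t$ --- two factors having no common jumps under $\tilde P$ --- one obtains
\begin{equation*}
	d\eta_t=\eta_{t-}\Big(h^T(X_t)D^{-1}(Y_t)\,dY_t+(\lambda(X_t)-1)\,d(Z_t-t)\Big),\qquad \eta_0=1.
\end{equation*}

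Next I would apply the product rule $d(\phi(X_t)\eta_t)=\phi(X_t)\,d\eta_t+\eta_{t-}\,d\phi(X_t)+d\langle\phi(X_\cdot)^c,\eta^c\rangle_t$, observing that $X_t$, hence $\phi(X_t)$, is continuous by Lemma~\ref{lem:2.1.1} so that no $\Delta\phi(X)\,\Delta\eta$ term occurs. Expanding $d\phi(X_t)$ with It\^o's formula from the representation in Lemma~\ref{lem:2.1.1}, and using $B_1B_1^T+B_2B_2^T=\sigma\sigma^T=2A$ together with $d\langle Y^k,Y^l\rangle_t=D_{kl}(Y_t)\,dt$, gives
\begin{equation*}
	d\phi(X_t)=\big(\mathcal L\phi-\sum_i\partial_i\phi\,(B_1h)_i\big)(X_t)\,dt+\sum_i\partial_i\phi\,(B_1\,dY_t)_i+\sum_i\partial_i\phi\,(B_2\,d\tilde Y_t)_i,
\end{equation*}
while a short computation shows $d\langle\phi(X_\cdot)^c,\eta^c\rangle_t=\eta_{t-}\sum_i\partial_i\phi\,(B_1h)_i(X_t)\,dt$, which cancels precisely the $-\sum_i\partial_i\phi\,(B_1h)_i$ drift term. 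Collecting the surviving terms and using $\mathcal B\phi=\phi h^TD^{-1}+\sum_i\partial_i\phi\,b_{1,i}$, integration yields the $\tilde P$-a.s.\ identity
\begin{equation*}
	\phi(X_t)\eta_t=\phi(X_0)+\int_0^t\eta_{s-}\mathcal L\phi\,ds+\int_0^t\eta_{s-}\mathcal B\phi\,dY_s+\int_0^t\eta_{s-}\sum_i\partial_i\phi\,(B_2\,d\tilde Y_s)_i+\int_0^t\phi\,\eta_{s-}(\lambda-1)\,d(Z_s-s),
\end{equation*}
with all integrands evaluated along $X_s$ (resp.\ $X_{s-}$).

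The last step is to take $\tilde E[\,\cdot\mid\mathcal F_t]$ of this identity and identify the terms. For the $ds$-integral, conditional Fubini together with Proposition~\ref{pro:2.1.1} gives $\tilde E[\int_0^t\eta_{s-}\mathcal L\phi(X_s)\,ds\mid\mathcal F_t]=\int_0^t\tilde E[\eta_s\mathcal L\phi(X_s)\mid\mathcal F_s]\,ds=\int_0^t\rho_s(\mathcal L\phi)\,ds$, using $\eta_{s-}=\eta_s$ $\tilde P$-a.s.\ for fixed $s$. For the $dY$-integral I would use Riemann sums $\sum_k\eta_{s_k}\mathcal B\phi(X_{s_k})(Y_{s_{k+1}}-Y_{s_k})$, factor out the $\mathcal F_t$-measurable increments, apply Proposition~\ref{pro:2.1.1} to replace $\tilde E[\,\cdot\mid\mathcal F_t]$ by $\tilde E[\,\cdot\mid\mathcal F_{s_k}]=\rho_{s_k}(\mathcal B\phi)$, and pass to the $L^2(\tilde P)$ limit; this is legitimate because $\mathcal L\phi$, $\mathcal B\phi$ and $\phi(\lambda-1)$ are bounded for $\phi\in C_0^\infty(\mathbb{R}^d)$ under {\bf H2}--{\bf H3}, so $|\rho_s(\psi)|\le C\,\tilde E[\eta_s\mid\mathcal F_s]$ with $s\mapsto\tilde E[\eta_s\mid\mathcal F_s]$ an $L^2$-bounded martingale. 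The compensated-Poisson integral is treated in the same way (equivalently by the predictable projection onto $(\mathcal F_t)$, under which $Z_t-t$ remains a martingale), giving $\int_0^t\rho_{s-}(\phi(\lambda-1))\,d(Z_s-s)$. Finally the $d\tilde Y$-term drops out: by Lemma~\ref{lem:2.1.1}, $\tilde Y$ is a Brownian motion independent of $\mathcal F_T$, so it stays Brownian after enlarging the filtration by $\mathcal F_T$, hence $M_t:=\int_0^t\eta_{s-}\sum_i\partial_i\phi\,(B_2)_{i\cdot}\,d\tilde Y_s$ satisfies $\tilde E[M_t\mid\mathcal F_T]=0$ and therefore $\tilde E[M_t\mid\mathcal F_t]=0$. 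Combined with $\tilde E[\phi(X_0)\mid\mathcal F_t]=\rho_0(\phi)$, this yields \eqref{eq:2.1.6}.

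I expect the main difficulty to lie not in the It\^o calculus --- which is routine once the dynamics of $\eta_t$ and the $B_1h$-cancellation are set up --- but in the rigorous interchange of $\tilde E[\,\cdot\mid\mathcal F_t]$ with the stochastic integrals against $Y$ and against $Z_s-s$: making the Riemann-sum / predictable-projection passages precise, controlling the $L^2$ limits, and handling the filtration bookkeeping needed to invoke Proposition~\ref{pro:2.1.1} and to eliminate the $d\tilde Y$ contribution.
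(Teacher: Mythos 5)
Your outline is correct and rests on the same backbone as the paper's proof: the It\^o dynamics of $\eta_t$ (the paper's \eqref{eq:2.1.7}), the expansion of $d\phi(X_t)$ via Lemma~\ref{lem:2.1.1} (the paper's \eqref{eq:2.1.9}), the product rule with the $B_1h$-cancellation, conditioning on the observation $\sigma$-algebra through Proposition~\ref{pro:2.1.1}, and elimination of the $d\tilde Y$ integral by independence. Where you genuinely depart from the paper is in how the interchange of conditional expectation with the Lebesgue and stochastic integrals is justified: the paper never works with $\eta_t$ directly but with the truncation $\eta_t^{\varepsilon}=\eta_t/(1+\varepsilon\eta_t)$, so that every integrand is bounded, each interchange is performed on the truncated quantities (Fubini, the Bain--Crisan lemma on conditioning stochastic integrals, stochastic Fubini), and \eqref{eq:2.1.6} is recovered by letting $\varepsilon\to0$ in the nine terms $E_1,\dots,E_9$ via (stochastic) dominated convergence; you instead keep $\eta_t$ itself and justify the interchanges by Riemann sums/predictable projection together with square-integrability of $\eta$, which indeed holds here because $h$, $D^{-1}$ and $\lambda$ are bounded, so $\eta$ has finite moments under $\tilde P$ on $[0,T]$. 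Your route is shorter and avoids the $\varepsilon$-bookkeeping, but it concentrates the entire burden on the $L^2$ estimates and the Riemann-sum passage that you only sketch --- precisely the step the truncation is designed to make routine. Two details to repair if you write it out in full: the identity should read $B_1DB_1^T+B_2B_2^T=\sigma\sigma^T=2A$ (your $B_1B_1^T+B_2B_2^T=\sigma\sigma^T$ is not correct as written, although your appeal to $d\langle Y\rangle_t=D\,dt$ indicates you intended the right computation); and identifying the integrand of the compensated Poisson integral as $\rho_{s-}\big(\phi(\lambda-1)\big)$ rather than $\rho_{s}\big(\phi(\lambda-1)\big)$ requires the left-limit argument the paper carries out in \eqref{eq:2.1.18} using Liptser--Shiryaev, which your ``predictable projection'' remark gestures at but does not supply.
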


\begin{proof}
We approximate $\eta_t$ with $\eta_t^{\varepsilon}=\frac{\eta_t}{1+\varepsilon\eta_t}$. By  It\^o formula
\begin{equation}
	\label{eq:2.1.7}
	d\eta_t=\eta_{t-}\big[ h(X_t)^TD^{-1}dY_t+(\lambda(X_{t-})-1)d(Z_t-t) \big].
\end{equation}
Using It\^o's formula for the jump process, we have
\begin{equation}
	\label{eq:2.1.8}
	\begin{aligned}
		\eta_t^{\varepsilon}={}&\eta_0^{\varepsilon}+\int_0^t \frac{\eta_s}{(1+\varepsilon\eta_s)^2}h(X_s)^TD^{-1}dY_s-\int_0^t
		\frac{\varepsilon\eta_t^2}{(1+\varepsilon\eta_t)^3}h(X_s)^TD^{-1}h(X_s)ds\\
		{}&-\int_0^t\frac{\eta_s}{(1+\varepsilon\eta_s)^2}(\lambda(X_s)-1)ds+\sum_{\tau_m\leq t}\Delta \frac{\eta_{\tau_m}}{1+\varepsilon\eta_{\tau_m}}.
	\end{aligned}
\end{equation}
Let $X_t$ satisfy \eqref{eq:2.1.1} and for any
$\phi\in C_0^{\infty}(\mathbb{R}^d)$
\begin{equation}
	\label{eq:2.1.9}
	\begin{aligned}
		d\phi(X_{t})
		={}&\mathcal{L}\phi dt-\sum\limits_{i=1}^{d}( B_1h)^{i}\frac{\partial\phi}{\partial x_{i}}dt+\nabla\phi^T  B_1dY_{t}+\nabla\phi^T B_2d\tilde{Y}_t.
	\end{aligned}
\end{equation}
Applying   the product rule for semi-martingales to (\ref{eq:2.1.7}) and (\ref{eq:2.1.9}), we obtain
\begin{equation}
	\label{eq:2.1.10}
	\begin{aligned}
		&\eta_t^{\varepsilon}\phi(X_t)
		\\
		={}&\eta_0^{\varepsilon}\phi(X_0)+\int_0^t\eta_s^{\varepsilon} \mathcal{L}\phi(X_s)ds-\int_0^t \frac{\varepsilon\eta_s^2}{(1+\varepsilon\eta_s)^2}\nabla\phi^T  B_1hds+\int_0^t\eta_s^{\varepsilon}\nabla\phi^T
		B_1dY_s
		\\
		{}&+\int_0^t\eta_s^{\varepsilon}\nabla\phi^T B_2d\tilde{Y}_s+\int_0^t \frac{\eta_s\phi(X_s)}{(1+\varepsilon\eta_s)^2}h^TD^{-1}dY_s-\int_0^t \frac{\varepsilon\eta_s^2\phi(X_s)}{(1+\varepsilon\eta_s)^2}h^TD^{-1}hds
		\\
		{}&-\int_0^t \frac{\eta_s\phi(X_s)}{(1+\varepsilon\eta_s)^2}(\lambda(X_s)-1)ds+\int_0^t
		\frac{\eta_{s-}(\lambda(X_{s-})-1)\phi(X_{s-})}{(1+\varepsilon\eta_{s-}\lambda(X_{s-}))(1+\varepsilon\eta_{s-})}dZ_s.
	\end{aligned}
\end{equation}
According to Proposition \ref{pro:2.1.1}, we only need to compute the conditional expectation based on the filtration $\mathcal{F}_T$. Take conditional expectation about $\eta_t^{\varepsilon}\phi(X_t)$ based on the observation $\mathcal{F}_T$, then we have
\begin{equation}
	\begin{aligned}
		\label{eq:2.1.11}
		&\tilde{E}(\eta_t^{\varepsilon}\phi(X_t)\rvert\mathcal{F}_T)
		\\
		={}&\tilde{E}(\eta_0^{\varepsilon}\phi(X_0)\rvert\mathcal{F}_T)+\tilde{E}\bigg(\int_0^t\eta_s^{\varepsilon}\mathcal{L}\phi(X_s)ds\rvert\mathcal{F}_T
		\bigg)\\
		&-\tilde{E}\bigg( \int_0^t \frac{\varepsilon\eta_s^2}{(1+\varepsilon\eta_s)^2}\nabla\phi^T  B_1hds\rvert\mathcal{F}_T \bigg)+\tilde{E}\bigg(\int_0^t\eta_s^{\varepsilon}\nabla\phi^T
		B_1dY_s\rvert\mathcal{F}_T\bigg)\\
		&+\tilde{E}\bigg( \int_0^t
		\frac{\eta_s\phi}{(1+\varepsilon\eta_s)^2}h^TD^{-1}dY_s\rvert \mathcal{F}_T\bigg)+\tilde{E}\bigg(\int_0^t\eta_s^{\varepsilon}\nabla\phi^T B_2d\tilde{Y}_s\rvert\mathcal{F}_T\bigg)\\
		&-\tilde{E}\bigg( \int_0^t\frac{\varepsilon\eta_s^2\phi}{(1+\varepsilon\eta_s)^3}h^TD^{-1}hds\rvert\mathcal{F}_T\bigg)-\tilde{E}\bigg( \int_0^t \frac{\eta_s\phi}{(1+\varepsilon\eta_s)^2}(\lambda(X_s)-1)ds\rvert\mathcal{F}_T \bigg)\\
		&+\tilde{E}\bigg(\int_0^t\frac{\eta_{s-}(\lambda(X_{s-})-1)\phi(X_{s-})}{(1+\varepsilon\eta_{s-}\lambda(X_{s-}))(1+\varepsilon\eta_{s-})}dZ_s\rvert\mathcal{F}_T \bigg)\\
		:={}&E_1+E_2-E_3+E_4+E_5+E_6-E_7-E_8+E_9.
	\end{aligned}
\end{equation}

Now, we  show that as $\varepsilon\rightarrow 0$, the following limits hold in the sense $\tilde{P}$-a.s.,
\begin{equation*}
	\begin{aligned}
		&\tilde{E}(\eta_t^{\varepsilon}\rvert\mathcal{F}_T)\rightarrow\rho_t(\phi),\quad
		E_1\rightarrow\rho_0(\phi),\quad
		E_2\rightarrow\int_0^t\rho_s(\mathcal{L}\phi)ds,\quad
		E_3\rightarrow 0,\\
		&E_4\rightarrow\int_0^t\rho_s(\nabla\phi^T c)dY_s,\quad E_5\rightarrow\int_0^t\rho_s(h^TD^{-1}\phi)dY_s,\quad
		E_6\rightarrow 0, \quad  E_7\rightarrow 0,\\
		&E_8\rightarrow\int_0^t\rho_s(\phi(\lambda-1))ds,\quad
		E_9\rightarrow\int_0^t\rho_{s-}(\phi(\lambda-1))dZ_s.
	\end{aligned}
\end{equation*}

From the pointwise convergence of $\eta_t^{\varepsilon}$ to $\eta_t$ as $\varepsilon\to0$, it follows that {\color{blue}$\lim\limits_{\varepsilon\rightarrow 0}\eta_t^{\varepsilon}\phi=\eta_t\phi$}.  In addition,
\begin{equation*}
	\tilde{E}\|\eta_t^{\varepsilon}\phi\|\leq\|\phi\|_{\infty}\tilde{E}(\eta_t)=\|\phi\|_{\infty} E(\eta_t\eta_t^{-1})=\|\phi\|_{\infty}<\infty.
\end{equation*}
Due to the dominated convergence theorem \cite[page 152]{ReDY1999}, we obtain
\begin{equation*}
	\lim_{\varepsilon\rightarrow 0}\tilde{E}(\eta_t^{\varepsilon}\phi\vert  \mathcal{F}_T)=\tilde{E}(\eta_t\phi\vert \mathcal{F}_T)=\rho_t(\phi),\quad\tilde{P}-a.s.
\end{equation*}
Similarly, there holds
\begin{equation*}
	\lim_{\varepsilon\rightarrow 0}E_1=\tilde{E}(\eta_0\phi(X_0)\vert \mathcal{F}_T)=\rho_0(\phi),\quad\tilde{P}-a.s.
\end{equation*}

Now, we consider  item $E_2$. Notice that
{\color{blue} for any $\varepsilon>0$, there holds}
\begin{equation*}
	\tilde{E} \bigg(\|\int_0^t\eta_t^{\varepsilon}\mathcal{L}\phi ds\| \big\vert \mathcal{F}_T\bigg)=\tilde{E} \bigg(\|\int_0^t
	\frac{\varepsilon\eta_s}{1+\varepsilon\eta_s}\frac{1}{\varepsilon}\mathcal{L}\phi ds\|\big\vert \mathcal{F}_T\bigg)\leq \frac{1}{\varepsilon}\|\mathcal{L}\phi\|_{\infty}T<\infty.
\end{equation*}
By Fubini's theorem, we  exchange the integral order in $E_2$ to obtain
\begin{equation*}
	\begin{aligned}
		\|E_2\|&=\|\tilde{E} \bigg(\int_0^t\eta_s^{\varepsilon}\mathcal{L}\phi ds\rvert\mathcal{F}_T\bigg)\|=\|\int_0^t\tilde{E}(\eta_s^{\varepsilon}\mathcal{L}\phi\vert \mathcal{F}_T)ds\|\\
		& \leq
		\tilde{E} \bigg(\int_0^t\tilde{E}(\|\eta_t^{\varepsilon}\mathcal{L}\phi\|\rvert\mathcal{F}_T)ds\bigg)\leq{} \tilde{E}
		\bigg(\int_0^t\tilde{E}(\eta_s\|\mathcal{L}\phi\|_{\infty}\vert\mathcal{F}_T)ds\bigg)
		\\
		&=\|\mathcal{L}\phi\|_{\infty}\int_0^t\tilde{E}(\eta_s)ds
		\leq\|\mathcal{L}\phi\|_{\infty}{\color{blue}T}<\infty.
	\end{aligned}
\end{equation*}
By the dominated convergence theorem, we  get
\begin{equation*}
	\lim_{\varepsilon\rightarrow 0}E_2=\int_0^t\tilde{E}(\eta_s\mathcal{L}\phi\vert \mathcal{F}_T)ds=\int_0^t\rho_s(\mathcal{L}\phi)ds,\quad \tilde{P}-a.s.
\end{equation*}

Next, we study item $E_3$. Notice that 
\begin{equation*}
	\begin{aligned}
		\tilde{E} \bigg(\int_0^t\|\frac{\varepsilon\eta_s^2}{(1+\varepsilon\eta_s)^2}\nabla\phi^T  B_1h\|ds\bigg)\leq{}&\|\nabla\phi\|\tilde{E} \big(\int_0^t\eta_s \| B_1h\|ds \big)
		\\
		={}&\|\nabla\phi\|\int_0^tE(\| B_1h\|)ds<\infty.
	\end{aligned}
\end{equation*}
This estimate, together with 
 the  the pointwise convergence  $\lim\limits_{\varepsilon\rightarrow 0} \frac{\varepsilon\eta_t^2}{(1+\varepsilon\eta_t)^2}\nabla\phi^T  B_1h=0$ and the dominated convergence theorem implies
$$\lim\limits_{\varepsilon\rightarrow 0}E_3=0, \quad \tilde{P}-a.s.$$
In a similar way, we obtain $\lim\limits_{\varepsilon\rightarrow 0}E_i=0$, $\tilde{P}-a.s.$ for $i=6,7$.

By isometry formula, we have that  for any $\varepsilon>0$
\begin{equation*}
	\begin{aligned}
		&\tilde{E}\left((\int_0^t\eta_s^{\varepsilon}\nabla\phi^T
		B_1dY_s)^2\right)\\
		={}&\tilde{E} \bigg( \int_0^t\big(\frac{\eta_s}{1+\varepsilon\eta_s}\big)^2\nabla\phi^T  B_1D B_1^T\nabla\phi ds \bigg)\\
		={}&\frac{1}{\varepsilon}\tilde{E}\bigg( \int_0^t
		\frac{\varepsilon\eta_s}{(1+\varepsilon\eta_s)^2}\eta_s\nabla\phi^T B_1D B_1^T\nabla\phi ds \bigg)
		\\
		\leq{}&\frac{\|\nabla\phi\|^2}{\varepsilon}\tilde{E}\big( \int_0^t\eta_s\| B_1D B_1^T\|ds \big)
		={}\frac{\|\nabla\phi\|^2}{\varepsilon}\int_0^tE( \|B_1D B_1^T\|)ds<\infty.
	\end{aligned}
\end{equation*}
According to \cite[Lemma 3.21]{BaCr2009}, we can change the order between conditional expectation and stochastic integral to obtain
\begin{equation}
	\label{eq:2.1.12}
	E_4=\int_0^t\tilde{E}\big( \frac{\eta_s}{1+\varepsilon\eta_s}\nabla\phi^T  B_1\vert \mathcal{F}_T\big)dY_s.
\end{equation}
Using Jensen's inequality and Fubini's Theorem,  {\color{blue} for any $\varepsilon>0$}, we have
\begin{equation*}
	\begin{aligned}
		& \tilde{E}(E_4^2)
		\\
		=&
		\tilde{E}\left\{\int_{0}^{t}\big[\tilde{E}(\frac{\eta_s}{1+\varepsilon\eta_s}\nabla\phi^T  B_1\vert \mathcal{F}_T)\big]D
		\big[\tilde{E}(\frac{\eta_s}{1+\varepsilon\eta_s}\nabla\phi^T B_1\vert \mathcal{F}_T)\big]^Tds\right\}
		\\
		\leq
		&
		\tilde{E}\left\{\int_{0}^{t}\tilde{E}\big[(\frac{\eta_s}{1+\varepsilon\eta_s}\nabla\phi^T  B_1)D(\frac{\eta_s}{1+\varepsilon\eta_s}\nabla\phi^T  B_1)^T\vert \mathcal{F}_T\big]ds\right\}
		\\
		\leq&\tilde{E}\left\{\int_{0}^{t}\tilde{E}\big[\|\nabla\phi\|^2\|B_1D B_1^T\|\frac{1}{\varepsilon}\frac{\varepsilon\eta_s}{1+\varepsilon\eta_s}\eta_s
		\vert \mathcal{F}_T\big]ds\right\}
		\\
		\leq
		&
		\frac{\|\nabla\phi\|^{2}}{\varepsilon}\tilde{E}\bigg(\int_{0}^{t}\tilde{E}(\eta_s\|B_1DB_1^T\|)ds\bigg)
		=
		\frac{\|\nabla\phi\|^{2}}{\varepsilon}\int_{0}^{t}\tilde{E}(\eta_s\|B_1D B_1^T\|)ds
		\\
		=
		&
		\frac{\|\nabla\phi\|^{2}}{\varepsilon}\int_{0}^{t}E(\|B_1D B_1^T\|)ds<\infty.
	\end{aligned}
\end{equation*}
This implies that  $E_4$  is a martingale, c.f.\ \cite[Theorem 4.3.1]{ShSE2004} and then
the process $\int_0^t\rho_s(\nabla\phi^T  B_1)dY_s$
is a local martingale.  Thus, the following difference  is a local martingale
\begin{equation}
	\label{eq:2.1.14}
	\int_0^t\rho_s(\nabla\phi^T  B_1)dY_s-E_4=\int_0^t\tilde{E}\big( \frac{\varepsilon(\eta_s)^2}{1+\varepsilon\eta_s}\nabla\phi^T  B_1\vert \mathcal{F}_T\big)dY_s.
\end{equation}
Set  $\xi_s^{\varepsilon}=\frac{\varepsilon(\eta_s^2)}{1+\varepsilon\eta_s}\nabla\phi^T  B_1$, then $\lim\limits_{\varepsilon\rightarrow 0}\xi_s^{\varepsilon}=0$, $\tilde{P}-a.s.$ Obviously, for any $\phi\in V$  we have
\begin{equation*}
\|\xi_s^{\varepsilon}\|\leq \|\nabla\phi\| \| B_1\|\eta_s.
\end{equation*}
Due to the  dominated convergence theorem, we  obtain
\begin{equation*}
	\lim\limits_{\varepsilon\rightarrow 0}\tilde{E}(\xi_s^{\varepsilon}\vert \mathcal{F}_T)=\tilde{E}(	\lim\limits_{\varepsilon\rightarrow 0}\xi_s^{\varepsilon}\vert \mathcal{F}_T)=0,\tilde{P}-a.s.
\end{equation*}
Furthermore,
\begin{equation}
	\label{eq:2.1.15}
		\|\tilde{E}(\xi_s^{\varepsilon}\vert \mathcal{F}_T)\| \leq
		\tilde{E}(\|\xi_s^{\varepsilon}\| \vert \mathcal{F}_T) \leq  \|\nabla\phi\| \| B_1\| \tilde{E}{(\eta_s\vert \mathcal{F}_T)} \leq  \|\nabla\phi\| \| B_1\|.
\end{equation}
Applying the stochastic dominated convergence theorem \cite[Theorem 32]{Protter2005}, we have 
\begin{equation*}
	\lim\limits_{\varepsilon\rightarrow 0}\int_0^t\tilde{E}(\xi_s^{\varepsilon}\vert \mathcal{F}_T)dY_s=\int_0^t\lim\limits_{\varepsilon\rightarrow 0}\tilde{E}(\xi_s^{\varepsilon}\vert \mathcal{F}_T)dY_s=0,\quad\tilde{P}-a.s.
\end{equation*}
Hence, we obtain 
\begin{equation*}
	\lim\limits_{\varepsilon\rightarrow 0}E_4=\int_0^t\rho_s(\nabla\phi  B_1)dY_s,\quad \tilde{P}-a.s.
\end{equation*}

Similarly, we  can prove that 
\begin{equation*}
	\lim\limits_{\varepsilon\rightarrow 0}E_5=\int_0^t\rho_s(\phi h^TD^{-1})dY_s,\quad\lim\limits_{\varepsilon\rightarrow 0}E_8=\int_0^t\rho_s(\phi(\lambda-1))ds,\quad\tilde{P}-a.s.\\
\end{equation*}

Finally, we investigate  the term  $E_9$. Let
\begin{equation*}
	G_s^{\varepsilon}:=\frac{\eta_{s-}(\lambda(X_{s-})-1)\phi(X_{s-})}{(1+\varepsilon\eta_{s-}\lambda(X_{s-}))(1+\varepsilon\eta_{s-})}. 
\end{equation*}
Then $\lim\limits_{\varepsilon\rightarrow 0} G_s^{\varepsilon}=\eta_{s-}\big( \lambda(X_{s-})-1 \big)\phi(X_{s-})$, $\tilde{P}-a.s.$

  It is easy to see that for any $\varepsilon>0$
\begin{align}
  \label{eq:2.2.18b}
  \vert G_s^{\varepsilon}\vert \leq \frac{\|\phi\|_{\infty}\|\lambda-1\|_{\infty}}{\varepsilon}<\infty.
\end{align}
This  estimate, together with  the stochastic Fubini's theorem  \cite[Theorem 64]{Protter2005} implies that we can change the order between the stochastic integral and
  conditional expectation in $E_9$ to obtain $E_9=\int_0^t\tilde{E}(G_s^{\varepsilon}\vert \mathcal{F}_T)dZ_s$.

Using the same argument as above, we obtain
\begin{equation*}
\lim \limits_{\varepsilon\rightarrow 0} E_9 =\int_0^t\tilde{E}\big( \phi(X_{s-})\eta_{s-}(\lambda(X_{s-})-1)\vert \mathcal{F}_T \big)dZ_s
\end{equation*}

According to  \cite[Theorem 1.6]{Liptser2001}, we have
\begin{equation}
	\label{eq:2.1.18}
	\begin{aligned}
		&\tilde{E}\big( \phi(X_{s-})\eta_{s-}(\lambda(X_{s-})-1)\vert \mathcal{F}_{T}\big)
		=\lim\limits_{r\uparrow s}\tilde{E}\big( \phi(X_r)\eta_r(\lambda(X_r)-1)\vert \mathcal{F}_T\big)\\
		&=\lim\limits_{r\uparrow s}\rho_r \big( \phi(\lambda-1) \big)=
		\rho_{s-}\big( \phi(\lambda-1)\big)
	\end{aligned}
\end{equation}
Hence, $\displaystyle\lim\limits_{\varepsilon\rightarrow 0}E_9=\int_0^t\rho_{s-}\big( \phi(\lambda-1) \big)dZ_s,~\tilde{P}-a.s.$. 
\end{proof}


The next theorem  follows from Theorem \ref{thm:2.1.1}.

\begin{theorem}
	\label{thm:2.1.2}
	Assume {\bf H1}-{\bf H3}. Then $p(t)$ satisfies Zakai equation:
	\begin{equation}
		\label{eq:2.1.20}
		dp(t)=\mathcal{L}^{\star}p(t)dt+\mathcal{B}^{\star}p(t)dY_t+\mathcal{C}p(t-)(dZ_t-dt),\quad p(0)=p_0\in H,
	\end{equation}
	where for any $\phi\in V$
	\begin{eqnarray*}
		\mathcal{L}^{\star}\phi
		&=&
		\sum\limits_{i,j=1}^{d}\frac{\partial^2}{\partial x_{i}\partial x_{j}}(a_{i,j}\phi)-\sum\limits_{i=1}^{d}\frac{\partial}{\partial x_{i}}(g_{i}\phi),
		\\
		\mathcal{B}^{\star}\phi
		&=&
		\phi h^TD^{-1}-\sum\limits_{i=1}^{d}\frac{\partial}{\partial x_{i}}(b_{1,i}\phi),\\
		\mathcal{C}\phi&=&(\lambda-1)\phi.
	\end{eqnarray*}
\end{theorem}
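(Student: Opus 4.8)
The plan is to derive the $V'$-valued Zakai equation \eqref{eq:2.1.20} from the weak functional identity \eqref{eq:2.1.6} of Theorem~\ref{thm:2.1.1} by transposing the operators $\mathcal{L}$ and $\mathcal{B}$ onto $p(t)$ via integration by parts and then extending the resulting identity from $C_0^\infty(\mathbb{R}^d)$ to all of $V$ by density. First I would recall from \eqref{eq:2.1.5} that $\rho_s(\psi)=(p(s),\psi)$ for every $\psi$, so that, for a fixed $\phi\in C_0^\infty(\mathbb{R}^d)$, the three integrands in \eqref{eq:2.1.6} are $(p(s),\mathcal{L}\phi)$, $(p(s),\mathcal{B}\phi)$ and $(p(s-),\phi(\lambda-1))$, while $\rho_0(\phi)=(p_0,\phi)$. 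Integrating by parts in $L^2(\mathbb{R}^d)$ — all boundary terms vanish because $\phi$ has compact support — gives $(p(s),\mathcal{L}\phi)=(\mathcal{L}^{\star}p(s),\phi)$ and $(p(s),\mathcal{B}\phi)=(\mathcal{B}^{\star}p(s),\phi)$ with $\mathcal{L}^{\star},\mathcal{B}^{\star}$ exactly as in the statement; the jump term needs no integration by parts because multiplication by $\lambda-1$ is symmetric, so $(p(s-),\phi(\lambda-1))=(\mathcal{C}p(s-),\phi)$ with $\mathcal{C}\phi=(\lambda-1)\phi$. Thus \eqref{eq:2.1.6} reads, for every $\phi\in C_0^\infty(\mathbb{R}^d)$,
\begin{equation*}
(p(t),\phi)=(p_0,\phi)+\int_0^t(\mathcal{L}^{\star}p(s),\phi)\,ds+\int_0^t(\mathcal{B}^{\star}p(s),\phi)\,dY_s+\int_0^t(\mathcal{C}p(s-),\phi)\,(dZ_s-ds).
\end{equation*}

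To upgrade this to the $V'$-valued equation \eqref{eq:2.1.20} I would next establish the regularity that makes each term meaningful. The crucial step is an energy estimate: applying the It\^o formula for semimartingales with jumps to $\|p(t)\|^2$ and using the uniform ellipticity {\bf H3}, which renders the second-order form associated with $\mathcal{L}^{\star}$ coercive on $V$ (so that the first-order contribution of $\mathcal{B}^{\star}p$ and the quadratic-variation term of the $Y$-integral are absorbed by a term of the form $-c\|p\|_1^2$), together with the boundedness in {\bf H2} and Gronwall's inequality, yields $p\in\mathbb{L}^2(0,T;V)$ and a modification of $p$ with c\`adl\`ag $H$-valued paths satisfying $\tilde{E}\sup_{0\le t\le T}\|p(t)\|^2<\infty$. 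Once $p(s)\in V$ for a.e.\ $s$, each of $\mathcal{L}^{\star}p(s)$, $\mathcal{B}^{\star}p(s)$ and $\mathcal{C}p(s-)$ lies in $V'$, and since $C_0^\infty(\mathbb{R}^d)$ is dense in $V$ the displayed identity extends to all $\phi\in V$; interpreted through the $V'$--$V$ duality this is precisely \eqref{eq:2.1.20}, with $p(0)=p_0$, and the c\`adl\`ag modification justifies the use of $p(t-)$ in the jump integral.

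The step I expect to be the main obstacle is the a priori estimate $p\in\mathbb{L}^2(0,T;V)$: one has to track the cross terms generated by the correlation between the signal noise $w_t$ and the observation noise appearing in $Y_t$ so as to show that the potentially destabilizing first-order terms coming from $\mathcal{B}^{\star}$ are dominated by the coercivity of $\mathcal{L}^{\star}$ — this is exactly where {\bf H3} (and not merely the non-negativity of $A$ and $D$) is essential, and it is also the reason why, in Section~\ref{sec3}, the splitting is arranged so that its second-order constituent stays uniformly elliptic rather than degenerate. By contrast the compensated jump term is a bounded zero-mean martingale and contributes only a harmless $C\|p\|^2$-type term to the estimate.
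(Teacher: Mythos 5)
Your proposal takes essentially the paper's route: the paper obtains Theorem~\ref{thm:2.1.2} directly from the functional identity \eqref{eq:2.1.6} of Theorem~\ref{thm:2.1.1} by exactly the transposition of $\mathcal{L}$, $\mathcal{B}$ and the multiplication operator onto $p$ that you describe, with the interpretation in the Gelfand triple $V\subset H\subset V'$ resting on the regularity statement of Lemma~\ref{t1.5.1}. The only difference is that the paper does not prove that regularity itself but cites the variational SPDE literature (Bensoussan, Pardoux), whereas you sketch it via an It\^o energy estimate on $\|p(t)\|^2$; that sketch is acceptable in spirit, though to be fully rigorous it should be run through a Galerkin approximation (as in the cited references) rather than applied directly to the as-yet-unregular solution.
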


The differential operator $\mathcal{B}^{\star}$ is not bounded in the usual sense. We now study its boundedness in $L(V,V^\prime)$.   Due to {\bf H2}, for any $\phi,\psi\in V$,  $\mathcal{B}^{\star}\phi\in H\subset V^\prime$
and satisfies
\begin{equation}
	\label{eq:2.1.22}
	\|\mathcal{B}^{\star}\phi\|_{V^{\prime}}=
	\sup_{\|\psi\|=1}(\phi,\mathcal{B}\psi)\leq \sup_{\|\psi\|=1} \|\phi\| \|\mathcal{B} \psi\|
	\leq C\|\phi\|,
\end{equation}
where $C>0$ is a constant.

The following lemma is concerned with  the regularity of  the second-order differential operator $-\mathcal{L}^{\star}+\mathcal{C}$.

\begin{lemma}
	\label{lem:2.2.5}
	Assume  {\bf H1}-{\bf H3}. Then there  exist constants $\beta_1,\beta_2>0$ and $\alpha> 0$
	such that $\forall\phi,\psi\in V$
	\begin{equation*}
		\begin{aligned}
		&
		\vert\langle(\mathcal{L}^{\star}-\mathcal{C})\phi,\psi\rangle\vert\leq\beta_1\|\phi\|_1\|\psi\|_1,
		\\
		&
		\beta_2\|\phi\|_1^2  \leq -\langle(\mathcal{L}^{\star}-\mathcal{C})\phi,\phi\rangle+\alpha\|\phi\|^2.
		\end{aligned}
	\end{equation*}
\end{lemma}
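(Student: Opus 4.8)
The plan is to verify the two estimates by unwinding the definitions of $\mathcal{L}^\star$ and $\mathcal{C}$ and appealing to the ellipticity hypothesis \textbf{H3} together with the boundedness assumptions in \textbf{H2}. First I would write, for $\phi,\psi\in V$, the duality pairing $\langle-\mathcal{L}^\star\phi,\psi\rangle$ in weak (divergence) form: integrating by parts in the definition of $\mathcal{L}^\star$ gives
\begin{equation*}
	\langle-\mathcal{L}^\star\phi,\psi\rangle=\sum_{i,j=1}^d\int_{\mathbb{R}^d}a_{i,j}(x)\frac{\partial\phi}{\partial x_i}\frac{\partial\psi}{\partial x_j}\,dx+\sum_{i,j=1}^d\int_{\mathbb{R}^d}\frac{\partial a_{i,j}}{\partial x_j}\frac{\partial\phi}{\partial x_i}\psi\,dx-\sum_{i=1}^d\int_{\mathbb{R}^d}g_i(x)\frac{\partial\phi}{\partial x_i}\psi\,dx,
\end{equation*}
and observe that $\langle\mathcal{C}\phi,\psi\rangle=\int_{\mathbb{R}^d}(\lambda-1)\phi\psi\,dx$ is a zeroth-order term bounded by $\|\lambda-1\|_\infty\|\phi\|\,\|\psi\|$. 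Since $\sigma\in C^2$ with bounded first and second derivatives, the $a_{i,j}=\tfrac12(\sigma\sigma^T)_{i,j}$ and their first derivatives are bounded; together with boundedness of $g$, the Cauchy--Schwarz inequality applied term by term yields $|\langle(\mathcal{L}^\star-\mathcal{C})\phi,\psi\rangle|\le\beta_1\|\phi\|_1\|\psi\|_1$, which is the first (continuity) estimate.

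For the second (Gårding) inequality I would set $\psi=\phi$ in the same identity. The leading term is $\sum_{i,j}\int a_{i,j}\partial_i\phi\,\partial_j\phi\,dx=\int (\nabla\phi)^T A(x)\nabla\phi\,dx\ge\alpha_1\|\nabla\phi\|^2$ by the lower bound in \textbf{H3}. The remaining first-order terms $\sum_{i,j}\int(\partial_j a_{i,j})\partial_i\phi\,\phi\,dx$ and $-\sum_i\int g_i\partial_i\phi\,\phi\,dx$ are each bounded in absolute value by $C\|\nabla\phi\|\,\|\phi\|$; applying Young's inequality $C\|\nabla\phi\|\,\|\phi\|\le\tfrac{\alpha_1}{4}\|\nabla\phi\|^2+\tfrac{C^2}{\alpha_1}\|\phi\|^2$ absorbs the gradient part into the elliptic term at the cost of a multiple of $\|\phi\|^2$. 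The zeroth-order term $-\int(\lambda-1)\phi^2\,dx$ is likewise bounded below by $-\|\lambda-1\|_\infty\|\phi\|^2$. Collecting terms gives $-\langle(\mathcal{L}^\star-\mathcal{C})\phi,\phi\rangle\ge\tfrac{\alpha_1}{2}\|\nabla\phi\|^2-\alpha_0\|\phi\|^2$ for a suitable $\alpha_0>0$; adding $(\tfrac{\alpha_1}{2}+\alpha_0)\|\phi\|^2$ to both sides and using $\|\phi\|_1^2=\|\phi\|^2+\|\nabla\phi\|^2$ produces $\beta_2\|\phi\|_1^2\le-\langle(\mathcal{L}^\star-\mathcal{C})\phi,\phi\rangle+\alpha\|\phi\|^2$ with $\beta_2=\tfrac{\alpha_1}{2}$ and $\alpha=\tfrac{\alpha_1}{2}+\alpha_0$.

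The only genuinely delicate point is the integration by parts that moves a derivative off $a_{i,j}\phi$: this is why the $C^2$ regularity of $\sigma$ (hence $C^1$ regularity of $a_{i,j}$ with bounded derivatives) is needed, and why the identity is first established for $\phi,\psi\in C_0^\infty(\mathbb{R}^d)$ and then extended to all of $V=H_0^1(\mathbb{R}^d)$ by density, using the continuity estimate just proved to justify the passage to the limit. Everything else is a routine application of Cauchy--Schwarz, Young's inequality, and the bounds in \textbf{H2}--\textbf{H3}; the main obstacle is simply bookkeeping the constants so that the gradient terms are correctly absorbed.
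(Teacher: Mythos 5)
Your proposal is correct and follows essentially the same route as the paper: rewrite $\mathcal{L}^{\star}$ in divergence form (the paper's $\delta_i=g_i-\sum_j\partial_{x_j}a_{i,j}$ plays the role of your first-order remainder), use the ellipticity bound $\alpha_1\|\nabla\phi\|^2$ from \textbf{H3}, absorb the first-order terms by Young's inequality, and control the zeroth-order $\mathcal{C}$-term via the bounds on $\lambda$, yielding a Gårding inequality with $\beta_2$ comparable to $\tfrac{\alpha_1}{2}$. Your write-up is in fact slightly more detailed than the paper's (which dismisses the continuity estimate as immediate from \textbf{H2} and omits the density remark), but there is no substantive difference in method.
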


\begin{proof}
The first inequality directly follows from assumption {\bf H2}.  Thus we only need to prove the second one.
	
	Direct computation gives, for any $\phi\in V$,
	\begin{equation*}
		\mathcal{L}^{\star}\phi
		=\sum\limits_{i,j=1}^d \frac{\partial}{\partial x_i}(a_{i,j}\frac{\partial \phi}{\partial x_j})-\sum\limits_{i=1}^d \frac{\partial}{\partial x_i}(\delta_i\phi)
	\end{equation*}
	where $\delta$ is a $\mathbb{R}^d$-valued function with $\delta_i=g_i-\sum\limits_{j=1}^d \frac{\partial a_{i,j}}{\partial x_j}$. Therefore,
	$$
	-\langle\mathcal{L}^{\star}\phi,\phi\rangle = (A\nabla\phi,\nabla\phi)-(\delta\phi,\nabla\phi).
	$$
	
	From assumption {\bf H3}, it follows that ${\|\delta\|}<\infty$. Take a positive number $\alpha>-(\varpi_2-1-\frac{1}{2\alpha_1}\|\delta\|^2)$ and let   $\beta_2 = \min(\frac12\alpha_1,\alpha+\varpi_2-1-\frac{1}{2\alpha_1}\|\delta\|^2)$, then we obtain 
	\begin{align*}
		-\langle (\mathcal{L}^{\star}-\mathcal{C})\phi,\phi\rangle +\alpha\|\phi\|^2
		&\geq \alpha_1\|\nabla\phi\|^2-\langle \delta\phi, \nabla\phi\rangle +(\alpha+\varpi_2-1)\|\phi\|^2
		\\
		&\geq \frac12\alpha_1\|\nabla\phi\|^2+(\alpha+\varpi_2-1-\frac{1}{2\alpha_1}\|\delta\|^2)\|\phi\|^2
		\\
		& \geq \beta_2\|\phi\|_1^2.
	\end{align*}
\end{proof}


The existence, uniqueness,  and regularity of the solution to the Zakai equation (\ref{eq:2.1.20}), which we summarize in the next lemma.,  can be obtained following the approaches in \cite{BenA1987,Pardoux1979}, 

\begin{lemma}\label{t1.5.1}
	Assume  {\bf H1}-{\bf H3}. For each $p_0\in H$, there exists a unique solution $p$ of
	\eqref{eq:2.1.20} satisfying
	\begin{equation*}
		p\in \mathbb{L}^2(0,T;V)\bigcap L^2(\Omega;C(0,T;H)).
	\end{equation*}
	Furthermore, if $p_0\in  L^4(\Omega;H)$, there holds $p\in L^{\infty}(0,T;L^{4}(\Omega;H))$.
\end{lemma}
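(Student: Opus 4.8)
The plan is to read \eqref{eq:2.1.20} as a linear stochastic evolution equation in the Gelfand triple $V\subset H\subset V'$ and to reduce it to the classical jump-free Zakai theory of \cite{Pardoux1979,BenA1987}, exploiting that the point process $Z$ has only finitely many jumps $\tau_1<\dots<\tau_{n_0}$ on $[0,T]$. On any random interval on which $Z$ is constant, \eqref{eq:2.1.20} reads $dp(t)=(\mathcal{L}^{\star}-\mathcal{C})p(t)\,dt+\mathcal{B}^{\star}p(t)\,dY_t$, a linear stochastic parabolic equation whose drift operator $\mathcal{L}^{\star}-\mathcal{C}$ is bounded in $L(V,V')$ and coercive by Lemma~\ref{lem:2.2.5} and whose stochastic coefficient $\mathcal{B}^{\star}$ is bounded from $H$ into $V'$ by \eqref{eq:2.1.22}. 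Hence on $[0,\tau_1\wedge T)$ the classical variational (energy) method supplies a unique solution with $p\in\mathbb{L}^2(0,\tau_1\wedge T;V)\cap L^2(\Omega;C([0,\tau_1\wedge T];H))$ together with the a priori estimate $E\sup_{t}\|p(t)\|^2+E\int_0^{\tau_1\wedge T}\|p(t)\|_1^2\,dt\le C\,E\|p_0\|^2$; in particular the left limit $p(\tau_1-)\in H$ exists.

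I would then propagate across the jumps. Evaluating the jump part of \eqref{eq:2.1.20} at $\tau_m$ gives $\Delta p(\tau_m)=\mathcal{C}p(\tau_m-)=(\lambda-1)p(\tau_m-)$, so $p(\tau_m)=\lambda(\cdot)\,p(\tau_m-)\in H$ because $\lambda$ is bounded; using this as the new initial datum and re-solving the jump-free equation on each successive inter-jump interval, one obtains a solution on all of $[0,T]$ with c\`adl\`ag $H$-valued paths, continuous off $\{\tau_1,\dots,\tau_{n_0}\}$ and lying in $\mathbb{L}^2(0,T;V)$. Concatenating the interval-wise estimates, the $H$-norm gets multiplied by at most $\varpi_2$ at each jump, so $E\sup_t\|p(t)\|^2+E\int_0^T\|p\|_1^2\,dt\le C\,E[\varpi_2^{2n_0}\|p_0\|^2]$, which is finite because the number of jumps $n_0$ of $Z$ on $[0,T]$ is stochastically dominated by a Poisson$(\varpi_2 T)$ random variable and hence has exponential moments of every order; the same bookkeeping applied to the classical $L^4$-moment estimate yields $p\in L^{\infty}(0,T;L^4(\Omega;H))$ whenever $p_0\in L^4(\Omega;H)$. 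Uniqueness is inherited interval by interval from the classical uniqueness (It\^o's formula for $\|p^{(1)}-p^{(2)}\|^2$, coercivity, Gronwall's lemma), the relation $p(\tau_m)=\lambda\,p(\tau_m-)$ propagating the equality of two solutions past each jump, and the c\`adl\`ag $H$-valued path regularity is exactly what the variational It\^o formula for $\|p(t)\|^2$ delivers on each piece.

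A more self-contained route, bypassing the random interval decomposition, is a Galerkin approximation in a basis of $V$: one solves the finite-dimensional linear SDEs with jumps, derives uniform bounds from the It\^o formula for semimartingales with jumps — the compensated Poisson integral $\int\mathcal{C}p(t-)\,d(Z_t-t)$ is a martingale and contributes only the lower-order correction $\int\|\mathcal{C}p\|^2\,ds\le C\int\|p\|^2\,ds$ — and passes to the limit using the linearity of all the operators. Either way, the step I expect to be the main obstacle is the treatment of the $dY_t$-stochastic term: its It\^o correction $\|\mathcal{B}^{\star}p\,D^{1/2}\|^2$ is of order $\|p\|_1^2$ (since $\mathcal{B}^{\star}p$ contains the first-order term $-B_1^{T}\nabla p$), so it must be reconciled with the coercivity of $\mathcal{L}^{\star}-\mathcal{C}$ — using the matrix inequality $B_1DB_1^{T}=\sigma b^{T}D^{-1}b\,\sigma^{T}\preceq\sigma\sigma^{T}=2A$, or equivalently via a gauge/robust transformation that removes the $dY_t$-integral and leaves the uniformly elliptic operator $\mathcal{L}^{\star}$ (so that the uniform ellipticity from {\bf H3} suffices). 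This is the one genuinely delicate point; everything else is a routine adaptation of the jump-free theory of \cite{Pardoux1979,BenA1987}.
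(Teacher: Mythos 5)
The paper itself offers no argument for this lemma beyond the sentence citing \cite{BenA1987,Pardoux1979}, so your task was really to reconstruct the reduction to that jump-free variational theory, and your overall architecture does this in the natural way: interlacing across the finitely many jump times of $Z$ with the jump relation $p(\tau_m)=\lambda\,p(\tau_m-)$ (so the $H$-norm grows by at most $\varpi_2$ per jump), classical energy/variational theory on each inter-jump interval, and the Galerkin route with the compensated Poisson integral as a clean alternative. Two small remarks on that part: under $\tilde P$ (the measure in which the paper works) $Z$ is a standard Poisson process of intensity $1$, so $n_0$ is Poisson$(T)$ rather than dominated by Poisson$(\varpi_2 T)$ — harmless, since exponential moments exist either way — and the random-interval argument needs the observation that $Z$ is independent of $\bar Y$ under $\tilde P$, so conditioning on the jump configuration preserves the Wiener property of the driving noise.

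The genuine gap is exactly at the point you flag as delicate, and neither of your two proposed fixes closes it. The It\^o correction of the $dY_t$-term is $\int(\mathcal{B}^{\star}p)D(\mathcal{B}^{\star}p)^T dx$, whose leading part is $\nabla p^T B_1 D B_1^T\nabla p$, and your matrix inequality $b^TD^{-1}b\preceq I$ gives only $2A-B_1DB_1^T=\sigma\bigl(I-b^TD^{-1}b\bigr)\sigma^T\succeq 0$, i.e.\ degenerate parabolicity. With that alone the energy identity loses the $\|p\|_1^2$ term entirely: you still get the $H$-bounds and $L^4$ moments, but not the $\mathbb{L}^2(0,T;V)$ regularity claimed in the lemma. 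Indeed, under {\bf H1}--{\bf H3} as literally stated one may take $\tilde b=0$ with $b$ square and invertible; then $b^TD^{-1}b=I$, the second-order stochastic correction exactly cancels $2A$, and no $V$-estimate is available by this route. The alternative ``gauge/robust transformation'' is also not a mere multiplicative exponential factor in the correlated case: because $\mathcal{B}^{\star}$ contains the first-order term $-\sum_i\partial_{x_i}(b_{1,i}\,\phi)$, removing the $dY_t$-integral requires a random change of spatial variables (stochastic characteristics), which modifies the second-order operator and must be analyzed separately. To complete the proof one needs strict super-parabolicity, $2A-B_1DB_1^T\succeq\epsilon I$ (guaranteed, e.g., if $\tilde b\tilde b^T$ is uniformly positive definite, so that $b^TD^{-1}b\preceq(1-\epsilon)I$); this is precisely the coercivity hypothesis under which the variational theory of \cite{Pardoux1979,BenA1987} that the paper invokes applies, and it should either be verified from the model data or added as an assumption. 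With that strengthening, your interlacing (or Galerkin) argument goes through as written.
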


In order to construct a stable and  efficient numerical method, we
take a  transformation $p(t)\to p(t)e^{-\mu t}$ in  \eqref{eq:2.1.20}
and obtain  a well-posed Zakai equation
\begin{equation}
	\label{eq:2.1.23}
	dp(t)+(-\mathcal{L}^{\star}p(t)+\mu p(t))dt=\mathcal{B}^{\star}p(t)dY_t+\mathcal{C}p(t-)(dZ_t-dt),
	\quad p(0)=p_0\in H.
\end{equation}

\section{A splitting-up  scheme and error estimation}\label{sec3}
In this section, we apply a splitting-up method to construct a temporal semi-discretized    approximation  of  equation \eqref{eq:2.1.23} and derive corresponding error estimations.

\subsection{A splitting-up approximate solution} 

Consider   three equations
\begin{align}
	dp_1(t)+\frac{\mu}{3}p_1(t)dt={}&\mathcal{B}^{\star}p_1(t)dY_t,\label{eq:2.2.1}
	\\
	dp_2(t)+\frac{\mu}{3}p_2(t)dt={}&(\mathcal{L}^{\star}-\mathcal{C})p_2(t)dt, \label{eq:2.2.1-1}
	\\
	dp_3(t)+\frac{\mu}{3}p_3(t)dt={}&\mathcal{C}p_3(t-)dZ_t. \label{eq:2.2.1-2}
\end{align}
Equation \eqref{eq:2.2.1} is a first-order SPDE.  We denote by   $\{Q_t^s, 0\leq s\leq t\}$ with  $Q_s^s=I$  its solution operator which is a Markov
semigroup, cf \cite{DaKL2004}.
\eqref{eq:2.2.1-1} is a  determined second-order PDE satisfying uniform elliptic condition if $\mu$ is large.  Denote by  $\{R_t^s, t\geq 0\}$ with $R_s^s=I$  its  strongly continuous semigroup.  \eqref{eq:2.2.1-2} is a stochastic differential equation driven by a point process. The existence and uniqueness of the solution to  \eqref{eq:2.2.1-2}  are obtained in   \cite{Protter2005}. We denote its solution operator by  $\{\Gamma_t^s, t\geq 0\}$ with  $\Gamma_s^s=I$.   According to \cite{GLMV2011,Protter2005}, there exists a
constant $C=C(T)$ such that for any $\phi\in L^2(\Omega;H)$ and  $0\leq s<t\leq T$, there holds
\begin{equation}\label{eq:2.2.19-2}
	\tilde{E}\|Q_t^s  \phi\|\leq C \tilde{E}\|\phi\|,\quad  \tilde{E}\|R_t^s  \phi\|\leq C \tilde{E}\|\phi\|,\quad \tilde{E}\|\Gamma_t^s \phi\|\leq C \tilde{E}\|\phi\|.
\end{equation}

For any given integer
$N>0$, let $t_r=r\kappa$ 
$(r=0,1,\cdots,N)$  be the uniformly partition of  interval $[0,T]$ with stepsize $\kappa=\frac{T}{N}$.  By virtue of the solutions of \eqref{eq:2.2.1}-\eqref{eq:2.2.1-2} in each interval $[t_r,t_{r+1}]$ we define a splitting-up  solution $p_{\kappa}^{r+1}$ to  \eqref{eq:2.1.23}   at each node point $t_{r+1}$   by
\begin{equation}\label{eq:2.4.6-1}
	p_{\kappa}^{r+1}=\Gamma_{t_{r+1}}^{t_r}R_{t_{r+1}}^{t_r} Q_{t_{r+1}}^{t_r}p_{\kappa}^r,\quad r=0,1,\cdots, N-1,\quad p_\kappa^0=p_0.
\end{equation}
Meanwhile, we also define three solution process to equations \eqref{eq:2.2.1}-\eqref{eq:2.2.1-2}  in each interval $[t_r,t_{r+1}]$,
\begin{equation*}
	p_{1\kappa}(t)=Q^{t_r}_{t}p_\kappa^r,\quad
	p_{2\kappa}(t)=R^{t_r}_{t}Q^{t_r}_{t_{r+1}}p_\kappa^r,\quad
	p_{3\kappa}(t)=\Gamma^{t_r}_{t}R^{t_r}_{t_{r+1}}Q^{t_{r}}_{t_{r+1}}p_\kappa^r.
\end{equation*}
Obviously, $p_{\kappa}^{r+1}=p_{3\kappa}(t_{r+1})$.  Then we have that
$p_{1\kappa},\, p_{3\kappa}\in\mathbb{L}^2(0,T;H)$ and $p_{2\kappa}\in\mathbb{L}^2(0,T;V)$ are right continuous
and their discontinuity only occurs at  node points.
Furthermore, there hold
\begin{equation}
	\label{eq:2.2.4}
	\begin{aligned}
		&p_{1k}(t)~\text{is}~\mathcal{F}_t~\text{measurable for }~ t\in[t_r,t_{r+1}],
		\\
		&p_{2k}(t)~\text{and}~p_{3k}(t)~\text{are}~\mathcal{F}_{t_{r+1}}~\text{measurable for}~ t\in[t_r,t_{r+1}].
	\end{aligned}
\end{equation}

\subsection{\textit{A priori} estimate}

\begin{theorem}
	\label{thm:2.2.1}
	Assume {\bf H1}-{\bf H3}. Then there exist constants  $\mu>0$ and $C=C(T)>0$ such that the  three processes $p_{i\kappa}(t)$ $(i=1,2,3)$ satisfy
	\begin{equation}
		\label{eq:2.2.5}
		\tilde{E}\int_0^T\|p_{1\kappa}\|^2dt\leq C, \quad \tilde{E}\int_0^T\|p_{2\kappa}\|_1^2dt\leq C, \quad \tilde{E}\int_0^T\|p_{3\kappa}\|^2dt\leq C.
	\end{equation}
	Furthermore, if $p_0\in L^4(\Omega;H)$, then there hold
	\begin{equation}\label{eq:2.2.13}
		\tilde{E}\|p_{1\kappa}(t)\|^4\leq C, \quad \tilde{E}\|p_{2\kappa}(t)\|^4\leq C, \quad \tilde{E}\|p_{3\kappa}(t)\|^4\leq C, \quad \forall t\in[0,T].
	\end{equation}
\end{theorem}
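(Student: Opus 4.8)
\emph{Proof proposal.} The plan is to propagate mean-square (and fourth-moment) bounds for the coupled processes across the $N$ subintervals one splitting step at a time, and then to read off the time-integrated estimates in \eqref{eq:2.2.5} by summation. By \eqref{eq:2.4.6-1} the three processes are linked along the chain $p_\kappa^r\mapsto p_{1\kappa}(t_{r+1})=p_{2\kappa}(t_r)\mapsto p_{2\kappa}(t_{r+1})=p_{3\kappa}(t_r)\mapsto p_{3\kappa}(t_{r+1})=p_\kappa^{r+1}$, so it suffices to control each of the solution operators $Q$, $R$, $\Gamma$ over a single interval $[t_r,t_{r+1}]$ of length $\kappa$ and then iterate. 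Throughout we fix $\mu$ so large that $\mu/3\geq\alpha$, with $\alpha$ the constant of Lemma \ref{lem:2.2.5}.

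For the $Q$-step, \eqref{eq:2.2.1} is a first-order SPDE and $\mathcal{B}^\star$ raises the differentiation order, so a direct It\^o energy computation in $H$ is unavailable; instead we invoke the stochastic-characteristic (Markov semigroup) representation of $Q_t^s$ from \cite{DaKL2004} together with \eqref{eq:2.2.19-2} and its $L^2(\Omega)$ and $L^4(\Omega)$ counterparts, which — because $Q_t^s=I+O(t-s)$ in $L^2$ (the zeroth-order term $h^TD^{-1}$ and the transport Jacobian contribute only $O(t-s)$ corrections under the $C^2$-bounds of {\bf H2}) and because the $Y$-increments are independent of $\mathcal{F}_{t_r}$ under $\tilde P$ — give $\tilde E\|p_{1\kappa}(t)\|^2\le(1+C\kappa)\tilde E\|p_\kappa^r\|^2$ and $\tilde E\|p_{1\kappa}(t)\|^4\le(1+C\kappa)\tilde E\|p_\kappa^r\|^4$ for $t\in[t_r,t_{r+1}]$. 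For the $R$-step, \eqref{eq:2.2.1-1} is a deterministic parabolic equation; testing it (pathwise) with $p_{2\kappa}$ and using Lemma \ref{lem:2.2.5},
\[
\frac{d}{dt}\|p_{2\kappa}(t)\|^2+2\beta_2\|p_{2\kappa}(t)\|_1^2\le 0,\qquad t\in[t_r,t_{r+1}],
\]
so $\|p_{2\kappa}(t_{r+1})\|^2+2\beta_2\int_{t_r}^{t_{r+1}}\|p_{2\kappa}(s)\|_1^2\,ds\le\|p_{2\kappa}(t_r)\|^2=\|p_{1\kappa}(t_{r+1})\|^2$, and likewise $\|p_{2\kappa}(t)\|^4$ is non-increasing. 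For the $\Gamma$-step, $\mathcal{C}$ is multiplication by $\lambda-1$ with $\lambda\in[\varpi_1,\varpi_2]$, so $\Gamma_t^s$ acts as $\phi\mapsto e^{-\mu(t-s)/3}\lambda(\cdot)^{Z_t-Z_s}\phi$; hence $\|p_{3\kappa}(t)\|^2\le\varpi_2^{2(Z_t-Z_{t_r})}\|p_{3\kappa}(t_r)\|^2$, and taking $\tilde E$ (using that $Z_t-Z_{t_r}$ is a $\tilde P$-Poisson increment independent of $\mathcal{F}_{t_r}$) gives $\tilde E\|p_{3\kappa}(t)\|^2\le e^{(t-t_r)(\varpi_2^2-1)}\tilde E\|p_{3\kappa}(t_r)\|^2\le(1+C\kappa)\tilde E\|p_{3\kappa}(t_r)\|^2$, and similarly in $L^4(\Omega)$.

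Composing the three single-step bounds along the chain above yields, for every $r$,
\[
\tilde E\|p_\kappa^{r+1}\|^2+2\beta_2(1+C\kappa)\,\tilde E\!\!\int_{t_r}^{t_{r+1}}\!\!\|p_{2\kappa}(s)\|_1^2\,ds\le(1+C\kappa)^2\,\tilde E\|p_\kappa^r\|^2,\qquad \tilde E\|p_\kappa^{r+1}\|^4\le(1+C\kappa)\,\tilde E\|p_\kappa^r\|^4 .
\]
Dropping the dissipation term and iterating the discrete Gronwall inequality gives $\sup_{0\le r\le N}\tilde E\|p_\kappa^r\|^2\le e^{CT}\tilde E\|p_0\|^2$ and, when $p_0\in L^4(\Omega;H)$, $\sup_{0\le r\le N}\tilde E\|p_\kappa^r\|^4\le e^{CT}\tilde E\|p_0\|^4$, both uniformly in $\kappa$; plugging these back into the single-step bounds yields \eqref{eq:2.2.13} for all $t\in[0,T]$. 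The bounds for $p_{1\kappa}$ and $p_{3\kappa}$ in \eqref{eq:2.2.5} follow by integrating the single-step $H$-estimates over each $[t_r,t_{r+1}]$ and summing, $\tilde E\int_0^T\|p_{1\kappa}\|^2dt\le\sum_{r}\kappa(1+C\kappa)\tilde E\|p_\kappa^r\|^2\le Te^{CT}\tilde E\|p_0\|^2$, and the same for $p_{3\kappa}$ and for the $H$-norm of $p_{2\kappa}$. For the $V$-norm of $p_{2\kappa}$, rewrite the first chained inequality as
\[
\tilde E\|p_\kappa^{r+1}\|^2-\tilde E\|p_\kappa^r\|^2+2\beta_2(1+C\kappa)\,\tilde E\!\!\int_{t_r}^{t_{r+1}}\!\!\|p_{2\kappa}(s)\|_1^2\,ds\le C\kappa\,\tilde E\|p_\kappa^r\|^2\le C\kappa e^{CT}\tilde E\|p_0\|^2 ,
\]
and sum over $r=0,\dots,N-1$: the first two terms telescope (to $\tilde E\|p_\kappa^N\|^2-\tilde E\|p_0\|^2\ge-\tilde E\|p_0\|^2$) while the right side sums to $CTe^{CT}\tilde E\|p_0\|^2$, whence $\tilde E\int_0^T\|p_{2\kappa}\|_1^2dt\le C$ uniformly in $\kappa$.

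The step I expect to be the main obstacle is twofold. First, the $Q$-step: since $\mathcal{B}^\star$ is bounded only from $V$ into $V'$ (cf. \eqref{eq:2.1.22}) and \eqref{eq:2.2.1} carries no second-order term to absorb $\|\nabla p_{1\kappa}\|^2$, one cannot naively differentiate $\|p_{1\kappa}(t)\|^2$ and must genuinely use the flow/semigroup representation of $Q_t^s$ and its moment bounds \eqref{eq:2.2.19-2}. Second, and more delicate, is keeping every constant independent of $\kappa$: the per-step factors must genuinely be of the form $1+O(\kappa)$ so that the $N=T/\kappa$-fold iteration produces $e^{CT}$ rather than a diverging power, and the parabolic dissipation $\beta_2\int\|p_{2\kappa}\|_1^2$ must be carried \emph{through} the recursion and then recovered by the telescoping argument above — summing it after iterating would cost a factor $N$ and ruin the estimate.
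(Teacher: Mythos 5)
Your overall architecture (one-step bounds for $Q$, $R$, $\Gamma$, chaining along $p_\kappa^r\mapsto p_{1\kappa}(t_{r+1}-)=p_{2\kappa}(t_r)\mapsto p_{2\kappa}(t_{r+1}-)=p_{3\kappa}(t_r)\mapsto p_\kappa^{r+1}$, discrete Gronwall, and recovering $\tilde E\int_0^T\|p_{2\kappa}\|_1^2\,dt$ by telescoping) is a legitimate alternative to the paper's argument, and your treatment of the $R$-step and of the $\Gamma$-step (the explicit representation $\Gamma_t^s\phi=e^{-\mu(t-s)/3}\lambda(\cdot)^{Z_t-Z_s}\phi$ together with the Poisson moment computation) is sound, arguably cleaner than the paper's handling of the jump integral. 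The genuine gap is at the $Q$-step. Your iteration needs the quantitative one-step bounds $\tilde E\|Q_t^{t_r}\phi\|^2\le(1+C\kappa)\tilde E\|\phi\|^2$ and $\tilde E\|Q_t^{t_r}\phi\|^4\le(1+C\kappa)\tilde E\|\phi\|^4$, but the only estimate you invoke, \eqref{eq:2.2.19-2}, gives $\tilde E\|Q_t^s\phi\|\le C(T)\tilde E\|\phi\|$ with a fixed constant; fed into your $N=T/\kappa$-fold composition this produces $C^{N}$ and destroys exactly the uniformity in $\kappa$ that you yourself single out as essential. The parenthetical claim ``$Q_t^s=I+O(t-s)$ in $L^2$'' is precisely the statement that must be proved (and for second and fourth moments, not the first moment in \eqref{eq:2.2.19-2}), and no proof is given; note also that the coefficients of $\mathcal{B}^{\star}$ depend on $Y_s$, so independence of the $Y$-increments from $\mathcal{F}_{t_r}$ does not by itself deliver the $(1+C\kappa)$ factor.

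Moreover, your premise that ``a direct It\^o energy computation in $H$ is unavailable'' for \eqref{eq:2.2.1} is the opposite of what the paper does: its proof rests on the It\^o identity \eqref{eq:2.2.8}, $d\|p_{1\kappa}\|^2+\bigl(\tfrac{2}{3}\mu\|p_{1\kappa}\|^2-\vert\mathcal{B}^{\star}p_{1\kappa}\vert_{D}^2\bigr)dt=2(p_{1\kappa},\mathcal{B}p_{1\kappa})\,dY_t$, combined with the bound $\vert\mathcal{B}^{\star}p\vert_D^2\le M\|p\|^2$ (drawn from \eqref{eq:2.1.22} and {\bf H3}) and the choice $\mu\ge\tfrac{3}{2}M$; this is exactly what supplies the per-step control in $L^2$, and the analogous It\^o computation for $\|p_{1\kappa}\|^4$ supplies it in $L^4$. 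The paper then sums the three energy identities with $\gamma=\min\{\tfrac{2}{3}\mu-M,\,2\beta_2,\,\tfrac{2}{3}\mu\}$, telescopes over the subintervals, and bounds the jump integral, which yields \eqref{eq:2.2.5} and \eqref{eq:2.2.13} in one sweep. To complete your proposal you must either actually prove the $(1+C\kappa)$ second- and fourth-moment bounds for $Q_t^s$ (e.g., via the stochastic characteristic/flow representation with the bounded coefficients of {\bf H2}--{\bf H3}, carried out carefully), or adopt the paper's energy-identity control of the $Q$-step, after which the rest of your argument goes through.
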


\begin{proof}
	On each interval $(t_r,t_{r+1})$, by  It\^o formula  there holds
	\begin{align}
		&d\|p_{1\kappa}(t)\|^2+(\frac{2}{3}\mu\|p_{1\kappa}(t)\|^2-\vert\mathcal{B}^{\star}p_{1\kappa}(t)\vert_{D}^2)dt=2(p_{1\kappa}(t),\mathcal{B}p_{1\kappa}(t))dY_t,\label{eq:2.2.8}
		\\
		& d\|p_{2\kappa}(t)\|^2+(\frac{2}{3}\mu\|p_{2\kappa}(t)\|^2-2\langle (\mathcal{L}^{\star}-\mathcal{C})p_{2\kappa}(t),p_{2\kappa}(t)\rangle)dt = 0,\label{eq:2.2.7}
		\\
		&d\|p_{3\kappa}(t)\|^2+\frac{2}{3}\mu\|p_{3\kappa}(t)\|^2dt= (\lambda^2-1)\|p_{3\kappa}(t-)\|^2dZ_t,\label{eq:2.2.9}
	\end{align}
where $\vert\mathcal{B}^{\star}p(t)\vert_D^2=\int_{\mathbb{R}^d}[\mathcal{B}^{\star}p(t)]D[\mathcal{B}^{\star}p(t)]^Tdx\leq M\|p\|^2$ due to  (\ref{eq:2.1.22}) and assumption {\bf
          H3}. 
	
	Choose $\mu = \max\{3\alpha, \frac{3}{2} M\}+1$ and $\gamma = \min\{\frac{2}{3}\mu-M,2\beta_2,\frac{2}{3}\mu\}$.
	By Lemma \ref{lem:2.2.5}, we obtain
	\begin{equation*}
		\begin{aligned}
			d(\|p_{1\kappa}\|^2+\|p_{2\kappa}\|^2+\|p_{3\kappa}\|^2)+\gamma(\|p_{1\kappa}\|^2+\|p_{2\kappa}\|_1^2+\|p_{3\kappa}\|^2)dt& \\
			\leq2(p_{2\kappa},\mathcal{B}p_{2\kappa})dY_t+(\lambda^2-1)\|p_{3\kappa}(t-)\|^2dZ_t.&
		\end{aligned}
	\end{equation*}
	Integrating this equation over  $(t_i,t_{i+1})$ and taking expectation yields
	\begin{equation*}
		\begin{aligned}
			\tilde{E}\big(\|p_{1\kappa}(t_{i+1}-)\|^2+\|p_{2\kappa}(t_{i+1}-)\|^2+\|p_{3\kappa}(t_{i+1}-)\|^2\big)-
			\tilde{E}\big(\|p_{1\kappa}(t_i)\|^2+\|p_{2\kappa}(t_i)\|^2+&
			\\
			\|p_{3\kappa}(t_i)\|^2\big)+\gamma \tilde{E}\int_{t_i}^{t_{i+1}}\big(\|p_{1\kappa}\|^2+\|p_{2\kappa}\|_1^2+\|p_{3\kappa}\|^2\big)dt\leq C\tilde{E}\int_{t_i}^{t_{i+1}}\|p_{3k}(t-)\|^2dZ_t.&
		\end{aligned}
	\end{equation*}
	Together with (\ref{eq:2.2.1})-(\ref{eq:2.2.1-2}), we get
	\begin{equation*}
		\tilde{E}\|p_\kappa^{i+1}\|^2\!-\!\tilde E\|p_\kappa^i\|^2\!+\!\gamma \tilde{E}\int_{t_i}^{t_{i+1}}\!\!(\|p_{1\kappa}\|^2\!+\!\|p_{2\kappa}\|_1^2\!+\!\|p_{3\kappa}\|^2)dt
		\!\leq\! C\tilde{E}\int_{t_i}^{t_{i+1}}\!\!\|p_{3\kappa}(t-)\|^2dZ_t.
	\end{equation*}
	Summing  up  this equation from $i=0$ to $r-1$ gives
	\begin{equation} \label{eq:2.2.12-1}
		\tilde{E}\|p_\kappa^{r}\|^2\!+\!\gamma \tilde{E}\int_{0}^{t_r}\!\!(\|p_{1\kappa}\|^2\!+\!\|p_{2\kappa}\|_1^2\!+\!\|p_{3\kappa}\|^2)dt
		\!\leq\! \!\tilde E\|p_0\|^2+C\tilde{E}\int_{0}^{t_r}\!\!\|p_{3\kappa}(t-)\|^2dZ_t.	
	\end{equation}
	Since the Poisson process $Z_t$ has only finite jump times, we have
	$$
	\tilde E\int_0^{T}\|p_{3k}(t-)\|^2dZ_t\leq C.
	$$
	Thus, we obtain $\tilde{E}\|p_\kappa^r\|^2\leq C$,  $r=1,\cdots,N$, and estimates  \eqref{eq:2.2.5}
         follow from \eqref{eq:2.2.12-1}.
        
	Next, we integrate (\ref{eq:2.2.8}) and (\ref{eq:2.2.7}) over $(t_r,t_{r+1})$ and then take expectation to obtain
	\begin{equation*}
		\tilde{E}\|p_{1\kappa}(t_{r+1}-0)\|^2\leq \tilde{E}\|p_{\kappa}^r\|^2\leq C, \quad\tilde{E}\|p_{2\kappa}(t_{r+1}-0)\|^2\leq \tilde{E}\|p_{1\kappa}(t_{r+1}-0)\|^2\leq C.
	\end{equation*}
	
	For any $t\in (t_r,t_{r+1})$, integrating (\ref{eq:2.2.8}), (\ref{eq:2.2.7}) and (\ref{eq:2.2.9}) over $[t_r, t]$,  we obtain
	\begin{equation*}
		\begin{aligned}
			&\tilde{E}\|p_{1\kappa}(t)\|^2\leq \tilde{E}\|p_\kappa^r\|^2\leq C,
			\quad \tilde{E}\|p_{2\kappa}(t)\|^2\leq \tilde{E}\|p_{1\kappa}(t_{r+1}-0)\|^2\leq C,
			\\
			&\tilde{E}\|p_{3\kappa}(t)\|^2\leq \tilde{E}\|p_{2\kappa}(t_{r+1}-0)\|^2+\tilde{E}\int_{t_r}^t(\lambda^2-1)\|p_{3\kappa}(t-)\|^2dZ_t\leq C,
		\end{aligned}
	\end{equation*}
	where the constant $C$ is independent of $r$.
	Therefore,
	\begin{equation}
		\label{eq:2.2.14}
		\tilde{E}\|p_{i\kappa}(t)\|^2\leq C,\quad\forall t\in [0,T],\quad i=1,2,3.
	\end{equation}
	
	By Ito's formula,  (\ref{eq:2.2.8}), (\ref{eq:2.2.7}) and (\ref{eq:2.2.9}),  we have for any $t\in [t_r,t_{r+1}]$, $r=0, 1, 2,\cdots, N-1$
	\begin{align*}
		&d\|p_{1\kappa}\|^4+[2\|p_{1\kappa}\|^2(\frac{2}{3}\mu\|p_{1\kappa}\|^2-\vert\mathcal{B}^{\star}p_{1\kappa}\vert_{D}^2)-4\vert\mathcal{B}^{\star}p_{1\kappa}\vert_D^4]dt = 4\|p_{1\kappa}\|^2(p_{1\kappa}, \mathcal{B}p_{1\kappa})dY_t,
		\\
		&d\|p_{2\kappa}\|^4+2\|p_{2\kappa}\|^2(\frac{2}{3}\mu\|p_{2\kappa}\|^2-2\langle (\mathcal{L}^{\star}-\mathcal{C})p_{2\kappa},p_{2\kappa}\rangle)dt=0,
		\\
		&d\|p_{3\kappa}\|^4+\frac{4}{3}\mu\|p_{3\kappa}\|^4dt=(\lambda^4-1)\|p_{3\kappa}(t-)\|^4dZ_t.
	\end{align*}
	Following the same argument above, we can also derive the estimates \eqref{eq:2.2.13}.
\end{proof}

\subsection{Convergence of splitting-up  solution}
In this section, we shall investigate the convergence and convergence order of the splitting-up  solution.

\begin{theorem}
	\label{thm:2.3.1}
	Assume that {\bf H1}-{\bf H3} hold and $p_0\in L^4(\Omega;H)$. Then as $\kappa\to 0$, there holds
	\begin{equation*}
		\begin{aligned}
			&p_{1\kappa}(t),p_{2\kappa}(t),p_{3\kappa}(t)\rightarrow p(t)~\text{in}~L^2(\Omega;H), ~\text{uniformly for }~ t\in [0,T],
			\\
			&p_{1\kappa},\,p_{3\kappa}\rightarrow p~\text{in}~\mathbb{L}^2(0,T;H),\quad p_{2\kappa}\rightarrow p~\text{in}~\mathbb{L}^2(0,T;V).
		\end{aligned}
	\end{equation*}
\end{theorem}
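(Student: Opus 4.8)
The plan is to compare each splitting process with the exact Zakai solution by writing the error as a telescoping sum over the $N$ subintervals, exploiting that on $[t_r,t_{r+1}]$ the splitting solution solves exactly one of \eqref{eq:2.2.1}–\eqref{eq:2.2.1-2} while $p$ solves the full equation \eqref{eq:2.1.23}. First I would introduce, on each interval $[t_r,t_{r+1}]$, the ``exact-restart'' process that solves \eqref{eq:2.1.23} with initial datum $p_\kappa^r$ at $t_r$; call its value at $t_{r+1}$ the operator $S_{t_{r+1}}^{t_r}p_\kappa^r$. The global error splits, via the usual Lady Windermere's fan argument, into a sum of \emph{local} (one-step consistency) errors $\sum_{r} S_{T}^{t_{r+1}}\big(\Gamma_{t_{r+1}}^{t_r}R_{t_{r+1}}^{t_r}Q_{t_{r+1}}^{t_r}-S_{t_{r+1}}^{t_r}\big)p_\kappa^r$ propagated by the (contractive, by the transformation that produced \eqref{eq:2.1.23} and Lemma \ref{lem:2.2.5}) solution operators; the $L^2(\Omega;H)$-stability bounds \eqref{eq:2.2.19-2} together with the a priori estimates of Theorem \ref{thm:2.2.1} control the propagation, so everything reduces to estimating one local splitting error in $L^2(\Omega;H)$.

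For the local error I would expand each of $Q$, $R$, $\Gamma$ to first order in $\kappa$ using the Itô/Dynkin formula: on $[t_r,t_{r+1}]$ one has, schematically, $Q_{t_{r+1}}^{t_r}=I+\int \mathcal{B}^\star(\cdot)dY+\tfrac12\int(\mathcal{B}^\star)^2(\cdot)ds-\tfrac\mu3\int(\cdot)ds+\dots$, $R_{t_{r+1}}^{t_r}=I+\int(\mathcal{L}^\star-\mathcal C-\tfrac\mu3)(\cdot)ds+\dots$, $\Gamma_{t_{r+1}}^{t_r}=I+\int\mathcal C(\cdot)(dZ-ds)+\int\mathcal C(\cdot)ds-\tfrac\mu3\int(\cdot)ds+\dots$, and $S_{t_{r+1}}^{t_r}=I+\int(\mathcal L^\star-\mu)(\cdot)ds+\int\mathcal B^\star(\cdot)dY+\int\mathcal C(\cdot)(dZ-ds)+\dots$. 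When the composition $\Gamma R Q$ is multiplied out, the $O(\kappa)$ (and $O(\sqrt\kappa)$ martingale) terms match $S$ exactly; the leftover is a finite sum of iterated stochastic integrals of order $\kappa$ or, for the cross terms involving one Wiener increment paired with a drift or jump increment, of order $\kappa^{3/2}$ in $L^2(\Omega;H)$ — these are bounded using \eqref{eq:2.1.22}, $\vert\mathcal B^\star\phi\vert_D\le M^{1/2}\|\phi\|$, Lemma \ref{lem:2.2.5}, the Itô isometry, and the fourth-moment bounds \eqref{eq:2.2.13} (needed because squaring the error and summing reintroduces a factor from $\|p_\kappa^r\|$). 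Summing $N=T/\kappa$ local errors of size $O(\kappa)$ in $L^2(\Omega;H)$ gives the $O(\sqrt\kappa)$ global rate, hence convergence; the convergence $p_{i\kappa}\to p$ in $\mathbb L^2(0,T;H)$ (and $p_{2\kappa}\to p$ in $\mathbb L^2(0,T;V)$) then follows by integrating the pointwise-in-$t$ bound over $[0,T]$, using for $p_{2\kappa}$ the extra $\|\cdot\|_1$-coercivity already exploited in \eqref{eq:2.2.12-1}.

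The main obstacle I anticipate is the bookkeeping of the local consistency error: one must verify that \emph{all} first-order terms in the expansion of $\Gamma_{t_{r+1}}^{t_r}R_{t_{r+1}}^{t_r}Q_{t_{r+1}}^{t_r}$ genuinely cancel against those of $S_{t_{r+1}}^{t_r}$ and that the non-commutativity of the operators $\mathcal L^\star-\mathcal C$, $\mathcal B^\star$, $\mathcal C$ only enters at order $\kappa$ (so it is harmless), while keeping careful track of adaptedness — $p_{1\kappa}$ is $\mathcal F_t$-measurable but $p_{2\kappa},p_{3\kappa}$ are only $\mathcal F_{t_{r+1}}$-measurable by \eqref{eq:2.2.4}, so the stochastic integrals against $dY$ in the comparison argument must be handled on the right filtration, likely by conditioning as in Proposition \ref{pro:2.1.1}. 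A secondary technical point is that the operator $\mathcal B^\star$ is only bounded as a map $V\to V'$ \eqref{eq:2.1.22}, so some terms must be estimated in the $V'$ norm and then absorbed using the $V$-coercivity of $\mathcal L^\star-\mathcal C$; this is exactly why the half order, rather than full order, is the natural rate for the time-discrete scheme.
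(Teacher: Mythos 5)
There is a genuine gap, and it is the hypotheses. Your whole argument hinges on an $O(\kappa)$ one-step consistency error in $L^2(\Omega;H)$, obtained by expanding $Q_{t_{r+1}}^{t_r}$, $R_{t_{r+1}}^{t_r}$, $\Gamma_{t_{r+1}}^{t_r}$ and the exact propagator to first order and cancelling; but the leftover terms are iterated integrals involving second-order compositions and commutators such as $(\mathcal{L}^{\star})^2$, $\mathcal{B}^{\star}(\mathcal{L}^{\star}-\mathcal{C})$, $\Gamma(\mathcal{L}^{\star}-\mathcal{C})-(\mathcal{L}^{\star}-\mathcal{C})\Gamma$, and bounding these in $H$ requires the data $p_{\kappa}^r$ (equivalently $p$) to be uniformly bounded in $H^2$/$H^3$. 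Under the hypotheses of this theorem --- only {\bf H1}--{\bf H3} and $p_0\in L^4(\Omega;H)$ --- the available regularity is $p\in\mathbb{L}^2(0,T;V)$ and the a priori bounds of Theorem \ref{thm:2.2.1}, which control only $H$- and $V$-norms; $\mathcal{L}^{\star}p$ lives merely in $V'$, so the cancellation bookkeeping you describe cannot even be written down, let alone estimated. This is precisely why the paper splits the result in two: the rate statement with the Lady-Windermere-type propagation is Theorem \ref{p1.4.4}, proved only under the extra assumption $p(t)\in V\cap H^3$ via the commutator estimates of Lemma \ref{lem:1.3.4}, while the present theorem is a pure convergence statement proved by compactness. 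The paper extracts weakly convergent subsequences from the bounds of Theorem \ref{thm:2.2.1}, shows the three weak limits coincide (Lemma \ref{lem:2.3.1}, via a $V'$-estimate $\tilde{E}\|p_{3\kappa}(t)-p_{1\kappa}(t)\|^2_{V'}\le C\kappa^{1/2}$), identifies the common limit with the Zakai solution by passing to the limit in the summed integrated equations (Lemma \ref{lem:2.3.2}), and then upgrades weak to strong convergence through an energy identity ($S_\kappa^1+S_\kappa^2+S_\kappa^3\to0$ combined with coercivity from Lemma \ref{lem:2.2.5}), with no rate claimed and no extra regularity used.

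A secondary point: even granting your local error estimate, the assertion that $p_{2\kappa}\to p$ in $\mathbb{L}^2(0,T;V)$ ``follows by integrating the pointwise-in-$t$ bound'' is not correct, since integrating an $H$-norm bound can never produce an $H^1$-norm bound. In the paper this convergence comes out of the energy argument itself: the coercivity $\beta_2\|p-p_{2\kappa}\|_1^2\le-\langle(\mathcal{L}^{\star}-\mathcal{C})(p-p_{2\kappa}),p-p_{2\kappa}\rangle+\alpha\|p-p_{2\kappa}\|^2$ is applied to the \emph{error} inside the lower bound for $S_\kappa$, so the vanishing of $S_\kappa$ forces $\int\|p-p_{2\kappa}\|_1^2\to0$. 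If you want to salvage your route, you would have to either add the $H^3$ regularity hypothesis (at which point you are essentially reproving Theorem \ref{p1.4.4}, not this theorem) or replace the local expansion by an argument that works at the $V$/$V'$ level of regularity, which is what the paper's compactness proof does.
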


Before proving the theorem, we notice that, according to  Theorem \ref{thm:2.2.1}, the three sequences $p_{1\kappa}(t)$, $p_{2\kappa}(t)$ and $p_{3\kappa}(t)$ are bounded in  spaces
$\mathbb{L}^2(0,T;H)$, $\mathbb{L}^2(0,T;V)$ and $\mathbb{L}^2(0,T;H)$, respectively. By the weakly compactness of these spaces, we can extract three subsequences, still denoted by $p_{1\kappa}(t)$, $p_{2\kappa}(t)$ and $p_{3\kappa}(t)$, such that as $\kappa\to 0$
\begin{equation}
	\label{eq:2.3.1}
	\begin{aligned}
		&(p_{1\kappa},p_{3\kappa})\rightarrow (p_1,p_3)~\text{in}~ \mathbb{L}^2(0,T;H)~\text{weakly},
		\\
		& p_{2\kappa}\rightarrow p_2~ \text{in}~ \mathbb{L}^2(0,T;V)~\text{weakly},
		\\
		&(p_{i\kappa}-p_{1\kappa})\to (p_i-p_1) ~\text{in}~\mathbb{L}^2(0,T;V^\prime)~\text{weak star for }~ i=2,3.
	\end{aligned}
\end{equation}
Furthermore, if $p_0\in L^4(\Omega;H)$
\begin{equation}\label{eq:2.2.19-1}
	p_{i\kappa}\rightarrow p_i~ \text{in}~ L^{\infty}(0,T;L^4(\Omega;H))~\text{for}~ i=1,2,3 \text{~weak star}~\text{as}~\kappa\to0.
\end{equation}

To prove the theorem, we need a series of lemmas. 
\begin{lemma}
	\label{lem:2.3.1}
	Assume that {\bf H1}-{\bf H3} hold and $p_0\in L^4(\Omega;H)$. Then
	the functions $p_1$, $p_2
	$ and $p_3$ are equal to a common function  $\xi \in \mathbb{L}^2(0,T;V)\cap L^{\infty}(0,T;L^4(\Omega;H))$.
\end{lemma}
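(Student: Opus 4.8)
The plan is to show that the three limits $p_1$, $p_2$, $p_3$ of the splitting-up subsequences coincide, by estimating the pairwise differences $p_{i\kappa} - p_{1\kappa}$ over a single subinterval and summing. First I would recall that on $[t_r, t_{r+1}]$ we have $p_{1\kappa}(t) = Q_t^{t_r} p_\kappa^r$, $p_{2\kappa}(t) = R_t^{t_r} Q_{t_{r+1}}^{t_r} p_\kappa^r$, and $p_{3\kappa}(t) = \Gamma_t^{t_r} R_{t_{r+1}}^{t_r} Q_{t_{r+1}}^{t_r} p_\kappa^r$, and that each generator acts with a bounded drift (in $L(V,V')$ for $\mathcal{B}^\star$ and $\mathcal{L}^\star - \mathcal{C}$, and boundedly on $H$ for $\mathcal{C}$). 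Writing, for instance, $p_{2\kappa}(t) - p_{1\kappa}(t) = \bigl(R_t^{t_r} - I\bigr)Q_{t_{r+1}}^{t_r}p_\kappa^r + \bigl(Q_{t_{r+1}}^{t_r} - Q_t^{t_r}\bigr)p_\kappa^r$, each term is the increment of a semigroup over a time interval of length at most $\kappa$ applied to a function whose $H$-norm (and, after integration, $V$-norm) is controlled by Theorem \ref{thm:2.2.1}. Using the stochastic/deterministic integral representations \eqref{eq:2.2.1}--\eqref{eq:2.2.1-2} for these increments together with the a priori bounds \eqref{eq:2.2.5}, I expect an estimate of the form
\begin{equation*}
	\tilde{E}\int_0^T \|p_{i\kappa}(t) - p_{1\kappa}(t)\|_{V'}^2\, dt \leq C\kappa, \qquad i = 2,3,
\end{equation*}
so that $p_{i\kappa} - p_{1\kappa} \to 0$ strongly in $\mathbb{L}^2(0,T;V')$ as $\kappa \to 0$.

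Next I would combine this with the weak convergences already recorded in \eqref{eq:2.3.1}: since $p_{i\kappa} - p_{1\kappa} \to p_i - p_1$ weak-star in $\mathbb{L}^2(0,T;V')$ while simultaneously $p_{i\kappa} - p_{1\kappa} \to 0$ strongly in the same space, uniqueness of weak-star limits forces $p_i = p_1$ in $\mathbb{L}^2(0,T;V')$ for $i = 2,3$. Hence $p_1 = p_2 = p_3 =: \xi$ as elements of $\mathbb{L}^2(0,T;V')$. The regularity claim then follows from where each $p_i$ lives: from \eqref{eq:2.3.1}, $\xi = p_2 \in \mathbb{L}^2(0,T;V)$; from \eqref{eq:2.2.19-1} together with the $L^4$ bounds \eqref{eq:2.2.13}, $\xi = p_i \in L^\infty(0,T;L^4(\Omega;H))$ for each $i$, so $\xi \in \mathbb{L}^2(0,T;V) \cap L^\infty(0,T;L^4(\Omega;H))$ as required.

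The main obstacle I anticipate is making the per-step increment estimate rigorous for the operator $Q_{t_{r+1}}^{t_r} - Q_t^{t_r}$ coming from the stochastic (Wiener-driven) equation \eqref{eq:2.2.1}: the natural estimate of a stochastic integral increment over $[t, t_{r+1}]$ in the $V'$-norm uses the Itô isometry and the bound $\|\mathcal{B}^\star \phi\|_{V'} \leq C\|\phi\|$ from \eqref{eq:2.1.22}, which gives an $O(\kappa)$ bound on the second moment — the reason the estimate is in $V'$ and only half an order in $\kappa^{1/2}$, not in $H$. One must also handle the jump term in $p_{3\kappa} - p_{2\kappa}$: the increment $\Gamma_t^{t_r} - I$ produces a term $\int \mathcal{C} p_{3\kappa}(s-)\, dZ_s$ which, because $Z$ has only finitely many jumps on $[0,T]$ and $\mathcal{C}$ is bounded on $H$, contributes a total (summed over $r$) that is $O(\kappa)$ in expectation after using $\tilde{E}\int_0^T \|p_{3\kappa}(s-)\|^2\,dZ_s \leq C$ as in the proof of Theorem \ref{thm:2.2.1}. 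Care with the measurability assertions \eqref{eq:2.2.4} is needed so that all conditional expectations and stochastic integrals are well defined, but this is routine.
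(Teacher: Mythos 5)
Your proposal is correct and follows essentially the same route as the paper: integrate the three split equations over the staggered subintervals (your semigroup-increment decomposition is just a rewriting of this), bound the differences $p_{i\kappa}-p_{1\kappa}$ in the $V'$-norm using the It\^o isometry, the bound $\|\mathcal{B}^{\star}\phi\|_{V'}\leq C\|\phi\|$, the compensated-Poisson split of the $dZ$ term and the a priori estimates of Theorem \ref{thm:2.2.1}, then combine the resulting strong convergence to zero with the weak(-star) limits in \eqref{eq:2.3.1} to conclude $p_1=p_2=p_3=\xi$, with the stated regularity inherited from $p_2\in\mathbb{L}^2(0,T;V)$ and \eqref{eq:2.2.19-1}. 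The only differences are cosmetic (the paper proves a pointwise-in-$t$ bound $\tilde{E}\|p_{3\kappa}(t)-p_{1\kappa}(t)\|_{V'}^2\leq C\kappa^{1/2}$ via Cauchy--Schwarz with the $L^4$ bounds, while you aim for an integrated $O(\kappa)$ bound; either suffices).
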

\begin{proof}
	Integrating (\ref{eq:2.2.1}) over $(t,t_{r+1})$ , \eqref{eq:2.2.1-1} over $(t_r,t_{r+1})$ and   (\ref{eq:2.2.1-2}) over   $(t_r,t)$, yields
	\begin{equation}\label{eq:2.3.1-1}
		\begin{aligned}
			&p_{1\kappa}(t_{r+1}-0)-p_{1\kappa}(t)+\int_{t}^{t_{r+1}} \frac{\mu}{3}p_{1\kappa}(s)ds=\int_t^{t_{r+1}}\mathcal{B}^{\star}p_{1\kappa}dY_s,
			\\
			&p_{2\kappa}(t_{r+1}-0)-p_{1\kappa}(t_{r+1}-0)+\int_{t_r}^{t_{r+1}}\frac{\mu}{3}p_{2\kappa}(s)ds=\int_{t_r}^{t_{r+1}}(\mathcal{L}^{\star}-\mathcal{C})p_{2\kappa}(s)ds,
			\\
			&p_{3\kappa}(t)-p_{2\kappa}(t_{r+1}-0)+\int_{t_r}^t \frac{\mu}{3}p_{3\kappa}(s)ds=\int_{t_r}^t \mathcal{C}p_{3\kappa}(s-)dZ_s.
		\end{aligned}
	\end{equation}
	Adding them up, we get
	\begin{equation*}
		\begin{aligned}
			&p_{3\kappa}(t)-p_{1\kappa}(t)+\int_t^{t_{r+1}}\frac{\mu}{3}p_{1\kappa}(s)ds+\int_{t_r}^{t_{r+1}}(\frac{\mu}{3}p_{2\kappa}(s)-(\mathcal{L}^{\star}-\mathcal{C})p_{2\kappa}(s))ds
			\\
			&+\int_{t_r}^t \frac{\mu}{3}p_{3\kappa}(s)ds=\int_t^{t_{r+1}}\mathcal{B}^{\star}p_{1\kappa}(s)dY_s+\int_{t_r}^t\mathcal{C}p_{3\kappa}(s-)dZ_s.
		\end{aligned}
	\end{equation*}
	Then we have,  for $t\in[t_r,t_{r+1}]$, 
	\begin{equation*} 
		\begin{aligned}
			&\tilde{E}\|p_{3\kappa}(t)-p_{1\kappa}(t)\|_{V^{\prime}}^2
			\\
			&\leq
			5\tilde{E}\big(\int_{t}^{t_{r+1}}\!\!\frac{\mu}{3}\|p_{1\kappa}(s)\|_{V^{\prime}}ds\big)^2\!+\!
			5\tilde{E}\big(\int_{t_r}^{t_{r+1}}\!\!\big\|\frac{\mu}{3}p_{2\kappa}(s)\!-\!(\mathcal{L}^{\star}\!-\!\mathcal{C})p_{2\kappa}(s)\big\|_{V^\prime}ds\big)^2\\
			& +5\tilde{E}\big(\!\int_{t_r}^t
			\!\!\frac{\mu}{3}\|p_{3\kappa}(s)\|_{V^{\prime}}ds\big)^2\!+\!
			5\tilde{E}\big\|\!\int_{t_r}^{t_{r+1}}\!\!\mathcal{B}^{\star}p_{1\kappa}(s)dY_s\big\|_{V^{\prime}}^2
			\!+\!5\tilde{E}\big\|\int_{t_r}^t\!\!\mathcal{C}p_{3\kappa}(s-)dZ_s\big\|^2_{V^\prime}
			\\
			&
			:= 5 I_1+5I_2+5I_3+5I_4+5I_5.
		\end{aligned}
	\end{equation*}
	Due to Theorem \ref{thm:2.2.1} and \eqref{eq:2.1.2-1}, we have 
	\begin{equation*}
		I_1\leq \tilde{E}(\int_t^{t_{r+1}}\frac{\mu}{3}\|p_{1\kappa}(s)\|ds)^2\leq C\kappa\tilde{E}\int_t^{t_{r+1}}\|p_{1\kappa}(s)\|^2ds\leq C\kappa.
	\end{equation*}
	Similarly, we get
	\begin{align*}
		&I_2\leq C\kappa \tilde{E}\int_{t_r}^{t_{r+1}}\|p_{2\kappa}(s)\|_1^2ds\leq C\kappa,\quad I_3\leq C\kappa \tilde{E}\int_{t_r}^t\|p_{3\kappa}(s)\|^2ds\leq C\kappa.
	\end{align*}
	Applying  It\^o isometry formula to $I_4$, we have 
	\begin{equation*}
		\begin{aligned}
			I_4&
			= \int_{t_r}^{t_{r+1}}\tilde{E}\|\mathcal{B}^{\star}p_{1\kappa}(s)\|_{V^{\prime}}^2ds
			\leq C\int_{t_r}^{t_{r+1}}\tilde{E}\|p_{1\kappa}(s)\|^2ds\\
			&\leq C\kappa^{1/2}(\int_{t_r}^{t_{r+1}}\tilde{E}\|p_{1\kappa}(s)\|^4ds)^{1/2}
			\leq C\kappa^{1/2}.
		\end{aligned}
	\end{equation*}
	Since $Z_t-t$ is a martingale under  measure $\tilde{P}$,  we have
	\begin{align*}
		I_5&\leq \tilde{E}\big\|\int_{t_r}^t\!\!\mathcal{C}p_{3\kappa}(s-)d(Z_s-s)\big\|^2_{V^\prime}+\tilde{E}\big\|\int_{t_r}^t\!\!\mathcal{C}p_{3\kappa}(s-)ds\big\|^2_{V^\prime}
		\\
		&= \tilde{E}\int_{t_r}^t\|\mathcal{C}p_{3\kappa}(s-)\|^2ds+\tilde{E}\big\|\int_{t_r}^{t}\mathcal{C}p_{3\kappa}(s-)ds\big\|^2
		\\
		&\leq C\kappa^{1/2}(\tilde{E}\int_{t_r}^t\|p_{3\kappa}(s-)\|^4ds)^{1/2}+C\kappa\tilde{E}\int_{t_r}^t\|p_{3\kappa}(s-)\|^2ds
		\\
		&\leq C\kappa^{1/2}.
	\end{align*}
Therefore,  we have proved
	\begin{equation}\label{eq:2.3.2-0}
		\tilde{E}\|p_{3\kappa}(t)-p_{1\kappa}(t)\|_{V^{\prime}}^2\leq C\kappa^{1/2},\quad t\in[t_r,t_{r+1}].
	\end{equation}
This estimate leads to 
	\[
	\lim_{\kappa\to0} \tilde E\|p_{3\kappa}-p_{1\kappa}\|_{V^\prime}=0,~\text{uniformly~for}~t\in[0,T].
	\]
Thus we have proved  $p_3=p_1$.
Similarly we prove  $p_2=p_1$. Thus $\xi=p_1=p_2=p_3$, which completes the proof.
\end{proof}

\begin{lemma}
	\label{lem:2.3.2}
	Assume  that {\bf H1}-{\bf H3} hold  and $p_0\in L^4(\Omega;H)$. Then
	$p=\xi$ is the unique solution of \eqref{eq:2.1.23}.
\end{lemma}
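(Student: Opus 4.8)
The plan is to pass to the limit $\kappa\to0$ in the splitting scheme and show that the common weak limit $\xi$ produced by Lemma~\ref{lem:2.3.1} is a solution of the Zakai equation \eqref{eq:2.1.23}; the identity $p=\xi$ then follows from the uniqueness part of Lemma~\ref{t1.5.1}, and since the limit is thereby uniquely pinned down, the subsequential convergences \eqref{eq:2.3.1}--\eqref{eq:2.2.19-1} automatically hold for the whole family (this is exactly what is needed to close the proof of Theorem~\ref{thm:2.3.1}).

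\emph{Step 1: a global identity.} Fixing $t\in(t_n,t_{n+1}]$ and summing the three relations in \eqref{eq:2.3.1-1} over the completed intervals $[t_r,t_{r+1}]$, $r=0,\dots,n-1$, together with the partial contributions on $[t_n,t]$, one telescopes (using $p_{1\kappa}(t_{r+1})=p_{3\kappa}(t_{r+1}-0)=p_\kappa^{r+1}$) to
\begin{equation*}
	p_{1\kappa}(t)=p_0-\frac{\mu}{3}\int_0^{t}(p_{1\kappa}+p_{2\kappa}+p_{3\kappa})\,ds+\int_0^{t}\mathcal{B}^{\star}p_{1\kappa}\,dY_s+\int_0^{t}(\mathcal{L}^{\star}-\mathcal{C})p_{2\kappa}\,ds+\int_0^{t}\mathcal{C}p_{3\kappa}(s-)\,dZ_s+\mathcal{R}_\kappa(t),
\end{equation*}
where $\mathcal{R}_\kappa(t)$ collects the boundary mismatches between $t$ and the node $t_n$ and satisfies $\tilde{E}\|\mathcal{R}_\kappa(t)\|_{V^\prime}^2\le C\kappa^{1/2}$ uniformly in $t$, by the very estimates used for $I_1,\dots,I_5$ in the proof of Lemma~\ref{lem:2.3.1}.

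\emph{Step 2: passing to the limit in the regular terms.} All the operators are linear, so weak convergence suffices. Since $p_{1\kappa},p_{2\kappa},p_{3\kappa}\rightharpoonup\xi$ weakly in $\mathbb{L}^2(0,T;H)$ by Lemma~\ref{lem:2.3.1} and \eqref{eq:2.3.1}, the three drift integrals combine into $\frac{\mu}{3}\int_0^t(p_{1\kappa}+p_{2\kappa}+p_{3\kappa})\,ds\rightharpoonup\mu\int_0^t\xi\,ds$. Because $\mathcal{L}^{\star}-\mathcal{C}\in L(V,V^\prime)$ and $p_{2\kappa}\rightharpoonup\xi$ weakly in $\mathbb{L}^2(0,T;V)$, $\int_0^t(\mathcal{L}^{\star}-\mathcal{C})p_{2\kappa}\,ds\rightharpoonup\int_0^t(\mathcal{L}^{\star}-\mathcal{C})\xi\,ds$. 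For the Wiener integral, $\mathcal{B}^{\star}\in L(H,V^\prime)$ by \eqref{eq:2.1.22}; the integrand $p_{1\kappa}$ is $\mathcal{F}_t$-adapted by \eqref{eq:2.2.4}, hence so is $\xi$ (the adapted processes form a closed, hence weakly closed, subspace of $\mathbb{L}^2(0,T;H)$), and $\phi\mapsto\int_0^{\cdot}\mathcal{B}^{\star}\phi\,dY_s$ is bounded linear (hence weakly continuous) on that subspace by It\^o's isometry; therefore $\int_0^t\mathcal{B}^{\star}p_{1\kappa}\,dY_s\rightharpoonup\int_0^t\mathcal{B}^{\star}\xi\,dY_s$.

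\emph{Step 3: the point-process term and conclusion.} This term is the main obstacle: on $(t_r,t_{r+1})$ the process $p_{3\kappa}(s-)$ is only $\mathcal{F}_{t_{r+1}}$-measurable and the integral $\int\mathcal{C}p_{3\kappa}(s-)\,dZ_s$ samples it at the random jump times, so weak $\mathbb{L}^2$ convergence does not pass through directly. I would split $dZ_s=(dZ_s-ds)+ds$; the $ds$-part converges as in Step~2, yielding $\int_0^t\mathcal{C}\xi\,ds$. In the compensated part I would first replace $p_{3\kappa}(s-)$ by $p_{1\kappa}(s-)$: since $Z$ has a.s.\ finitely many jumps and $\tilde{E}\|p_{3\kappa}(s)-p_{1\kappa}(s)\|_{V^\prime}^2\le C\kappa^{1/2}$ uniformly in $s$ by \eqref{eq:2.3.2-0}, the incurred error tends to $0$; and $s\mapsto p_{1\kappa}(s-)$ is left-continuous and adapted, hence predictable, so $\int_0^{\cdot}\mathcal{C}p_{1\kappa}(s-)(dZ_s-ds)$ is a genuine $L^2$-martingale and the isometry again gives weak continuity, with limit $\int_0^t\mathcal{C}\xi(s-)(dZ_s-ds)$ (interpreted through the c\`adl\`ag representative). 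Collecting all limits, the $-\int_0^t\mathcal{C}\xi\,ds$ coming from $\mathcal{L}^{\star}-\mathcal{C}$ cancels the $+\int_0^t\mathcal{C}\xi\,ds$ from splitting $dZ$, and one arrives at
\begin{equation*}
	\xi(t)=p_0-\int_0^t(-\mathcal{L}^{\star}\xi+\mu\xi)\,ds+\int_0^t\mathcal{B}^{\star}\xi\,dY_s+\int_0^t\mathcal{C}\xi(s-)(dZ_s-ds),
\end{equation*}
i.e.\ $\xi$ solves \eqref{eq:2.1.23}. By the uniqueness assertion of Lemma~\ref{t1.5.1} we conclude $p=\xi$. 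I expect the genuinely delicate points to be the rigorous handling of the point-process integral (the non-adaptedness of $p_{3\kappa}$ just described) and the careful bookkeeping of the remainder $\mathcal{R}_\kappa$; the rest is routine weak-limit passage.
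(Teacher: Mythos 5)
Your proposal takes essentially the same route as the paper's proof: telescope the split equations into the global identity \eqref{eq:2.3.7} (the paper keeps the boundary mismatch as tail integrals from $[t/\kappa]\kappa$ to $t$ and shows they vanish, which is your $\mathcal{R}_\kappa$), pass to the weak(-star) limit term by term to recover \eqref{eq:2.1.23}, and conclude $p=\xi$ from the uniqueness in Lemma \ref{t1.5.1}. Your Step 3 in fact supplies more justification for the limit of the point-process integral than the paper, which passes to that limit without further comment, so the argument is correct and aligned with the paper's.
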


\begin{proof}
	Integrating  equations (\ref{eq:2.2.1})-(\ref{eq:2.2.1-2}) over $(t_{i-1},t_i)$ and adding up, we get
	\begin{equation}
		\label{eq:2.3.4}
		\begin{aligned}
			&p_\kappa^{i}-p_\kappa^{i-1}+\int_{t_{i-1}}^{t_i}\big( \frac{\mu}{3}(p_{1\kappa}(s)+p_{2\kappa}(s)+p_{3\kappa}(s))-(\mathcal{L}^{\star}-\mathcal{C})p_{1k}(s)\big)ds
			\\
			&=\int^{t_i}_{t_{i-1}}\mathcal{B}^{\star}p_{1\kappa}(s)dY_s+\int^{t_i}_{t_{i-1}}\mathcal{C}p_{3\kappa}(s-)dZ_s.
		\end{aligned}
	\end{equation}
	Sum up this equation  from $i=0$ to $r$, we get
	\begin{equation}
		\label{eq:2.3.5}
		\begin{aligned}
			&p_\kappa^r-p_{\kappa}^0+\int_0^{t_r}\big( \frac{\mu}{3}(p_{1\kappa}(s)+p_{2\kappa}(s)+p_{3\kappa}(s))-(\mathcal{L}^{\star}-\mathcal{C})p_{1\kappa}(s)
			\big)ds
			\\
			&=\int_0^{t_r}\mathcal{B}^{\star}p_{1\kappa}dY_s+\int_0^{t_r}\mathcal{C}p_{3\kappa}(s-)dZ_s.
		\end{aligned}
	\end{equation}
	
	For any $t\in(t_r,t_{r+1})$,
	integrating (\ref{eq:2.2.1}) on $[t_r,t]$ leads to
	\begin{equation}
		\label{eq:2.3.6}
		p_{1\kappa}(t)-p_\kappa^r+\int_{t_r}^t \frac{\mu}{3}p_{1\kappa}(s)ds=\int_{t_r}^t\mathcal{B}^{\star}p_{1\kappa}(s)dY_s.
	\end{equation}
	
	Adding up  (\ref{eq:2.3.5}) and (\ref{eq:2.3.6}), we have
	\begin{equation}
		\label{eq:2.3.7}
		\begin{aligned}
			&p_{1\kappa}(t)-p_{\kappa}^0+\int_0^t \frac{\mu}{3}p_{1\kappa}(s)ds-\int_0^{[t/\kappa]\kappa}(\mathcal{L}^{\star}-\mathcal{C})p_{2\kappa}(s) ds \\
			&+\int_0^{[t/\kappa]\kappa} \frac{\mu}{3}(p_{2\kappa}(s)+p_{3\kappa}(s))ds=\int_0^t\mathcal{B}^{\star}p_{1\kappa}(s)dY_s+\int_0^{[t/\kappa]\kappa}\mathcal{C}p_{3\kappa}(s-)dZ_s.
		\end{aligned}
	\end{equation}
	Noticing that as $\kappa\to0$, for $i=2,3$
	\begin{equation*}
		\tilde{E}\bigg\|\int_{[t/\kappa]\kappa}^tp_{i\kappa}(s)ds\bigg\|^2\leq{}\big( t-[t/\kappa]\kappa \big)\big( \int_{[t/\kappa]\kappa}^t \tilde{E}\|p_{i\kappa}(s)\|^2ds \big)\leq C\big( t-[t/\kappa]\kappa \big)\rightarrow 0.
	\end{equation*}
	According to It\^o isometry formula, we have, {\color{blue} as $\kappa\to0$, } 
	\begin{equation*}
		\begin{aligned}
			&\tilde E\|\int_{[t/\kappa]\kappa}^t\mathcal{B}^{\star}p_{1\kappa}(s)dY_s\|_{V^\prime}^2
			=\tilde E \int_{[t/\kappa]\kappa}^t \|\mathcal{B}^{\star}p_{1\kappa}(s)\|_{V^\prime}^2ds
			\\
			\leq &(t-[t/\kappa]\kappa)^{1/2}(\tilde{E}\int_{[t/\kappa]\kappa}^t\|p_{1\kappa}(s)\|^4ds)^{1/2}\to 0,
		\end{aligned}
	\end{equation*}
	and 
	\begin{equation*}
		\begin{aligned}
			&\tilde{E}\left\|\int_{[t/\kappa]\kappa}^t\!\!\!\mathcal{C}p_{3\kappa}(s-)dZ_s \right\|^2
			\!\leq\! 2\tilde{E}\left\|\int_{[t/\kappa]\kappa}^t\!\!\!\mathcal{C}p_{3\kappa}(s-)(dZ_s\!-\!ds)\right\|^2
			\!\!+\!2\tilde{E}\left\|\int_{[t/\kappa]\kappa}^t\!\!\!\mathcal{C}p_{3\kappa}(s-)ds\right\|^2
			\\
			&= 2\tilde{E}\int_{[t/\kappa]\kappa}^t\|\mathcal{C}p_{3\kappa}(s-)\|^2ds+2\tilde{E}\left\|\int_{[t/\kappa]\kappa}^t\mathcal{C}p_{3\kappa}(s-)ds\right\|^2
			\\
			&\leq  C(t\!-\![t/\kappa]\kappa)^{1/2}\tilde{E}(\int_{[t/\kappa]\kappa}^t\!\!\|p_{3\kappa}(s-)\|^4ds)^{1/2}
			\!+\!C(t\!-\![t/\kappa]\kappa)\tilde{E}\int_{[t/\kappa]\kappa}^t\!\!\|p_{3\kappa}(s-)\|^2ds
			\\
			&\rightarrow 0.
		\end{aligned}
	\end{equation*}
	
Taking limit in \eqref{eq:2.3.7} in weak star sense  as $\kappa\to0$, we obtain
	\begin{equation*}
		\begin{aligned}
			\xi(t)-p(0)+\!\!\int_0^t {\mu}\xi(s)ds-\!\!\int_0^{t}(\mathcal{L}^{\star}-\mathcal{C})\xi(s) ds
			=\!\!\int_0^t\mathcal{B}^{\star}\xi(s)dY_s+\!\!\int_0^{t}\mathcal{C}\xi(s-)dZ_s.
		\end{aligned}
	\end{equation*}
	This is precisely  equation (\ref{eq:2.1.23}). Then the  proof of this lemma follows from Lemma \ref{t1.5.1}.
\end{proof}

{\bf Proof of Theorem \ref{thm:2.3.1}.}
We   integrate (\ref{eq:2.2.8}), (\ref{eq:2.2.7}) and (\ref{eq:2.2.9}) over interval $[t_i, t_{i+1}]$, then take expectation and sum them up to obtain 
\begin{equation*}
	\begin{aligned}
		&\tilde{E}\|p_\kappa^{i+1}\|^2-\tilde{E}\|p_\kappa^i\|^2+\tilde{E}\int_{t_i}^{t_{i+1}}
		\big(\frac{2}{3}\mu(\|p_{1\kappa}(s)\|^2+\|p_{2\kappa}(s)\|^2+\|p_{3\kappa}(s)\|^2)
		\\
		&-\vert\mathcal{B}^{\star}p_{1\kappa}(s)\vert_{D}^2
		-2\langle(\mathcal{L}^{\star}-\mathcal{C})p_{2\kappa}(s),p_{2\kappa}(s) \rangle\big)ds=\tilde{E}\int_{t_i}^{t_{i+1}}(\lambda^2-1)\|p_{3\kappa}(s-)\|^2dZ_s.
	\end{aligned}
\end{equation*}
Summing up this equation in $i$ from $0$ up to $r-1$, we get
\begin{equation}
	\label{eq:2.3.16}
	\begin{aligned}
		\tilde{E}\|p_k^r\|^2-\tilde{E}\|p_\kappa^0\|^2+\tilde{E}\int_0^{t_r} \bigg(\frac{2}{3}\mu(\|p_{1\kappa}(s)\|^2+\|p_{2\kappa}(s)\|^2+\|p_{3\kappa}(s)\|^2)\\
		-\vert\mathcal{B}^{\star}p_{1\kappa}(s)\vert_D^2
		-2\langle(\mathcal{L}^{\star}-\mathcal{C})p_{2\kappa}(s),p_{2\kappa}(s) \rangle\bigg)ds=\tilde{E}\int_0^{t_r}(\lambda^2-1)\|p_{3k}(s-)\|^2dZ_s
	\end{aligned}
\end{equation}

For any $t\in [t_r,t_{r+1}]$, integrating (\ref{eq:2.2.7}) on $[t_r, t]$, we have
\begin{equation}
	\label{eq:2.3.17}
	\tilde{E}\|p_{1\kappa}(t)\|^2-\tilde{E}\|p_\kappa^r\|^2+\tilde{E}\int_{t_r}^t \frac{2}{3}\mu\|p_{1\kappa}(s)\|^2ds=\tilde{E}\int_{t_r}^t\vert\mathcal{B}^{\star}p_{1\kappa}(s)\vert_D^2ds.
\end{equation}

Adding  (\ref{eq:2.3.16}) to (\ref{eq:2.3.17}), we obtain 
\begin{equation}
	\label{eq:2.3.18}
	\begin{aligned}
		&\tilde{E}\|p_{1\kappa}(t)\|^2\!-\!\tilde{E}\|p_0\|^2\!+\!\tilde{E}\!\int_0^t\!\! \frac{2}{3}\mu\|p_{1\kappa}(s)\|^2ds\!-\!2\tilde{E}\!\int_0^{[t/\kappa]\kappa}\!\! \langle(\mathcal{L}^{\star}\!-\!\mathcal{C})p_{2\kappa}(s),p_{2\kappa}(s) \rangle ds\\
		&+\tilde{E}\int_0^{[t/\kappa]\kappa}\frac{2}{3}\mu(\|p_{2\kappa}(s)\|^2+\|p_{3\kappa}(s)\|^2)ds
		\\
		&=\tilde{E}\int_0^t\vert\mathcal{B}^{\star}p_{1\kappa}(s)\vert_{D}^2ds+\tilde{E}\int_0^{[t/\kappa]\kappa}(\lambda^2-1)\|p_{3\kappa}(s-)\|^2dZ_s.
	\end{aligned}
\end{equation}

Define
\begin{equation*}
	\begin{aligned}
		S_k^1:={}&\tilde{E}\|p(t)\|^2-2\tilde{E}\int_0^{[t/\kappa]\kappa}\langle (\mathcal{L}^{\star}-\mathcal{C})p(s),p(s)\rangle ds+\tilde{E}\int_0^t \frac{2}{3}\mu\|p(s)\|^2ds
		\\
		&+\tilde{E}\int_0^{[t/\kappa]\kappa} \frac{4}{3}\mu\|p(s)\|^2ds-\tilde{E}\int_0^t\vert\mathcal{B}^{\star}p(s)\vert_{D}^2 ds-\tilde{E}\int_0^{[t/\kappa]\kappa}(\lambda^2-1)\|p_{s-}\|^2dZ_s,\\ 
		S_\kappa^2:={}&-2\tilde{E}(p(t),p_{1\kappa}(t))+4\tilde{E}\int_0^{[t/\kappa]\kappa} \langle(\mathcal{L}^{\star}-\mathcal{C})p_{2\kappa}(s),p(s)\rangle ds
		\\
		&-\frac{4}{3}\mu \tilde{E}\int_0^t(p(s),p_{1\kappa}(s))ds-\frac{4}{3}\mu \tilde{E}\int_0^{[t/\kappa]\kappa}(p(s),p_{2\kappa}(s)+p_{3\kappa}(s))ds+
		\\
		&2\tilde{E}\int_0^t\int\mathcal{B}^{\star}p(s)D(\mathcal{B}^{\star}p_{2k}(s))^Tdxds+2\tilde{E}\int_0^t(\lambda^2-1)(p_{s-},p_{3k}(s-))dZ_s,
\end{aligned} 
	\end{equation*}
	
\begin{equation*}
	\begin{aligned}	
		S_\kappa^3:={}&\tilde{E}\|p_{1\kappa}(t)\|^2+\tilde{E}\int_0^t \frac{2}{3}\mu\|p_{1\kappa}(s)\|^2ds-2\tilde{E}\int_0^{[t/\kappa]\kappa}\langle(\mathcal{L}^{\star}-\mathcal{C})p_{2\kappa}(s),p_{2k}(s) \rangle ds
		\\
		&+\tilde{E}\int_0^{[t/\kappa]\kappa}\frac{2}{3}\mu(\|p_{2\kappa}(s)\|^2+\|p_{3\kappa}(s)\|^2)ds
		-\tilde{E}\int_0^t\vert\mathcal{B}^{\star}p_{2\kappa}(s)\vert_{D}^2ds
		\\
		&-\tilde{E}\int_0^{[t/\kappa]\kappa}(\lambda^2-1)\|p_{3\kappa}(s-)\|^2dZ_s.
\end{aligned}
\end{equation*}
 We now consider the convergence of these items in $L^2(\Omega;H)$  as $\kappa\rightarrow 0$ 
\begin{equation*}
	\begin{aligned}
		S_{\kappa}^1\rightarrow{}&\tilde{E}\|p(t)\|^2-2\tilde{E}\int_0^t\langle (\mathcal{L}^{\star}-\mathcal{C})p(s),p(s)\rangle
		ds+2\tilde{E}\int_0^t\mu\|p(s)\|^2ds
		\\
		&-\tilde{E}\int_0^t\vert\mathcal{B}^{\star}p(s)\vert_{D}^2ds-\tilde{E}\int_0^t(\lambda^2-1)\|p(s-)\|^2dZ_s
		\\
		={}&\|p_0\|^2,
		\\
		S_{\kappa}^2\rightarrow{}&-2\tilde{E}\|p(t)\|^2+4\tilde{E}\int_0^t\langle (\mathcal{L}^{\star}-\mathcal{C})p(s),p(s)\rangle ds-4\mu \tilde{E}\int_0^t\|p(s)\|^2ds
		\\
		&+2\tilde{E}\int_0^t\vert\mathcal{B}p(s)\vert_{D}^2ds+2\tilde{E}\int_0^t(\lambda^2-1)\|p(s-)\|^2dZ_s
		\\
		={}&-2\|p_0\|^2.
	\end{aligned}
\end{equation*}
Notice that  $\displaystyle\lim_{\kappa\to0}S_\kappa^3=\|p_0\|^2$ also follows from  (\ref{eq:2.3.18}). Therefore we have $S_\kappa :=S_\kappa^1+S_\kappa^2+S_\kappa^3\rightarrow 0$ as $\kappa\to 0$.

Choosing
$
\mu=\max\{\frac{3}{2}M,3\alpha,\frac{3}{2}(\varpi_2^2-1)\}+1
$
and by
the uniform elliptic condition, we have
\begin{equation*}
	-\langle(\mathcal{L}^{\star}-\mathcal{C})(p(t)-p_{2\kappa}(t)),p(t)-p_{2\kappa}(t) \rangle+\alpha\|p(t)-p_{2\kappa}(t)\|^2\geq \beta_2\|p(t)-p_{2\kappa}(t)\|_1^2.
\end{equation*}
Thus
\begin{equation*}
	\begin{aligned}
		S_{\kappa}\geq{}&
		\tilde{E}\|p(t)-p_{1\kappa}(t)\|^2+(\frac{2}{3}\mu-M^2)\tilde{E}\int_0^t\|p(s)-p_{1\kappa}(s)\|^2ds
		\\
		&+(\frac{2}{3}\mu-2\alpha)\tilde{E}\int_0^{[t/\kappa]\kappa}\|p(s)-p_{2\kappa}(s)\|^2ds
		\\
		&+(\frac{2}{3}\mu-\lambda^2+1)\tilde{E}\int_0^{[t/k]k}\|p(s)-p_{3\kappa}(s)\|^2ds
		\geq 0,
	\end{aligned}
\end{equation*}
which implies that for any $\forall t\in[0, T]$, as $\kappa\to0$
\begin{equation*}
	\tilde{E}\|p(t)-p_{1\kappa}(t)\|^2\rightarrow 0,~\tilde{E}\int_0^t\|p(s)-p_{1\kappa}(s)\|^2ds\rightarrow 0,~S_\kappa \rightarrow 0.
\end{equation*}
Hence as $\kappa\to0$
\begin{equation*}
	\begin{aligned}
		&p_{1\kappa}(t)\rightarrow p(t)~\text{in}~L^2(\Omega;H)~\text{uniformly for}~t\in [0,T],
		\\
		&p_{1\kappa}\rightarrow p ~\text{in}~\mathbb{L}^2(0,T;H).
	\end{aligned}
\end{equation*}
Similarly, we obtain the convergence of $p_{2\kappa}$ and $p_{3\kappa}$ as $\kappa \to 0$.
\hspace*{\fill}$\qedsymbol$~~

\vskip1mm
As an application of Theorem \ref{thm:2.3.1},   we immediately obtain a  convergence property  for splitting-up  solution $p_{\kappa}^{r+1}$.

\begin{theorem}
	Assume that {\bf H1}-{\bf H3} hold  and $p_0\in L^4(\Omega;H)$. Then the splitting-up  solution $p_{\kappa}^{r+1}$ converges to the exact solution $p(t_{r+1})$ in $L^2(\Omega;H)$ as $\kappa\rightarrow 0$.
\end{theorem}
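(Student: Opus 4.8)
The plan is to read the statement as an immediate corollary of Theorem~\ref{thm:2.3.1}. Recall from the line following \eqref{eq:2.4.6-1} that the splitting-up iterate coincides with the third splitting process evaluated at the node, $p_\kappa^{r+1}=p_{3\kappa}(t_{r+1})$. Theorem~\ref{thm:2.3.1} asserts that $p_{3\kappa}(t)\to p(t)$ in $L^2(\Omega;H)$ \emph{uniformly} for $t\in[0,T]$; writing
\[
\varepsilon(\kappa):=\sup_{t\in[0,T]}\tilde E\|p_{3\kappa}(t)-p(t)\|^2,
\]
this means $\varepsilon(\kappa)\to 0$ as $\kappa\to0$. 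Since $t_{r+1}\in[0,T]$ for every admissible index $r\in\{0,1,\dots,N-1\}$ with $\kappa=T/N$, we obtain
\[
\tilde E\|p_\kappa^{r+1}-p(t_{r+1})\|^2=\tilde E\|p_{3\kappa}(t_{r+1})-p(t_{r+1})\|^2\le\varepsilon(\kappa)\longrightarrow 0,
\]
uniformly in the node index $r$, which is precisely the claimed convergence.

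The one methodological point I would spell out is that Theorem~\ref{thm:2.3.1}, together with Lemma~\ref{lem:2.3.1} and Lemma~\ref{lem:2.3.2}, is first established along subsequences extracted via the weak-compactness relations \eqref{eq:2.3.1}--\eqref{eq:2.2.19-1}. However, Lemma~\ref{lem:2.3.2} identifies every such weak limit with \emph{the} unique solution $p$ of the Zakai equation \eqref{eq:2.1.23} guaranteed by Lemma~\ref{t1.5.1}. A standard subsequence-of-subsequence argument then promotes the convergence to the whole family $\{p_{3\kappa}\}_{\kappa>0}$, so that $\varepsilon(\kappa)\to0$ without passing to a subsequence and the displayed bound above is legitimate for the full family. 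This is the only step that deserves a sentence of justification, and it rests entirely on the uniqueness asserted in Lemma~\ref{t1.5.1}.

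Finally, since $p\in L^2(\Omega;C(0,T;H))$ by Lemma~\ref{t1.5.1}, the value $p(t_{r+1})$ is well defined at each node; if one prefers to read the limit with $r=r(\kappa)$ varying so that $t_{r(\kappa)+1}\to t^\ast$ for some fixed $t^\ast\in[0,T]$, then combining the uniform convergence of $p_{3\kappa}$ with the mean-square continuity of $p$ yields $p_\kappa^{r(\kappa)+1}\to p(t^\ast)$ in $L^2(\Omega;H)$. In short, the theorem is a direct consequence of Theorem~\ref{thm:2.3.1} — as the phrase "As an application of Theorem~\ref{thm:2.3.1}" already signals — and no genuine obstacle arises beyond the bookkeeping just described.
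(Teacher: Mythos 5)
Your proposal is correct and follows exactly the route the paper intends: the identity $p_\kappa^{r+1}=p_{3\kappa}(t_{r+1})$ together with the uniform-in-$t$ convergence of $p_{3\kappa}$ from Theorem~\ref{thm:2.3.1}, which is why the paper states the result as an immediate application without a separate proof. Your additional remark on upgrading the subsequence convergence to the whole family via the uniqueness in Lemma~\ref{t1.5.1} is a sound piece of bookkeeping that the paper leaves implicit, not a different argument.
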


For $\phi\in V$ and $\tau \in [0, T]$, define two processes $\psi$ and $\zeta$ by  
\begin{equation*}
	\begin{aligned}
		\psi(\tau)&=(\Gamma_\tau^s(\mathcal{L}^{\star}-\mathcal{C})-(\mathcal{L}^{\star}-\mathcal{C})\Gamma_\tau^s)R_\tau^sQ_\tau^s\phi,
		\\
		\zeta(\tau)&=(\Gamma_\tau^sR_\tau^s\mathcal{B}^{\star}-\mathcal{B}^{\star}\Gamma_\tau^sR_\tau^s)Q_\tau^s\phi.
	\end{aligned}
\end{equation*}
We now estimate the two processes, which  will play an important role in the subsequent analysis.

\begin{lemma}
	\label{lem:1.3.4}
	Assume that {\bf H1-H3} hold and $\phi\in V\cap H^3$. Then for $0\leq s <\tau \leq T$
	\begin{align*}
		&\tilde{E}\|\psi(\tau)\|^2\leq C(\tau-s)\tilde{E}\|\phi\|_3^2 ,\quad\|\tilde{E}(\psi(\tau))\|\leq C(\tau-s)\tilde{E}\|\phi\|_3 ,\\ 
		&\tilde{E}\|\zeta(\tau)\|^2\leq C(\tau-s)\tilde{E}\|\phi\|_3^2,\quad\|\tilde{E}(\zeta(\tau))\|\leq C(\tau-s)\tilde{E}\|\phi\|_3.
	\end{align*}
\end{lemma}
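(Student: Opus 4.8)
The plan is to recognise both quantities as operator commutators and to exploit the very explicit nature of $\Gamma_\tau^s$. Solving \eqref{eq:2.2.1-2} pathwise shows that $\Gamma_\tau^s$ is the multiplication operator $\phi\mapsto e^{-\frac{\mu}{3}(\tau-s)}\lambda(\cdot)^{Z_\tau-Z_s}\phi$, where $Z_\tau-Z_s$ is, under $\tilde P$, a Poisson variable of mean $\tau-s$ that (by Lemma \ref{lem:2.1.1} and the decoupling of $Y$ and $Z$ under $\tilde P$) is independent of $Q_\tau^s$, of $R_\tau^s$, and of $\phi$. Two structural consequences drive everything. First, $\Gamma_\tau^s$ commutes with every multiplication operator, in particular with $\mathcal{C}$, so $\psi(\tau)=(\Gamma_\tau^s\mathcal{L}^{\star}-\mathcal{L}^{\star}\Gamma_\tau^s)R_\tau^sQ_\tau^s\phi$. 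Second, on $\{Z_\tau-Z_s=0\}$ the operator $\Gamma_\tau^s$ is a scalar multiple of the identity, hence commutes with $\mathcal{L}^{\star}$ and $\mathcal{B}^{\star}$; therefore $\psi(\tau)$ vanishes there, as does the part of $\zeta(\tau)$ coming from a commutator with $\Gamma_\tau^s$. Finally, by the smoothness of $\lambda$ one has $\|\Gamma_\tau^s\theta\|_k\le C(1+Z_\tau-Z_s)^2\varpi_2^{Z_\tau-Z_s}\|\theta\|_k$, and the Poisson moments $\tilde{E}[(1+Z_\tau-Z_s)^{c}\varpi_2^{c(Z_\tau-Z_s)}]$ are bounded uniformly for $\tau-s\in[0,T]$, with the contribution of the event $\{Z_\tau-Z_s\ge1\}$ of order $O(\tau-s)$.

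Next I would record three auxiliary facts, all obtained exactly as in \eqref{eq:2.2.19-2} by energy and semigroup estimates under the standing regularity of the coefficients: for $0\le k\le3$, $\tilde{E}\|Q_\tau^s\phi\|_k^2\le C\tilde{E}\|\phi\|_k^2$ and $\|R_\tau^s\phi\|_k\le C\|\phi\|_k$; the near-identity bounds $\|(R_\tau^s-I)\theta\|\le C(\tau-s)\|\theta\|_2$ and $\|(R_\tau^s-I)\theta\|_1\le C(\tau-s)\|\theta\|_3$, which follow from $R_\tau^s\theta-\theta=\int_s^\tau R_u^s(\mathcal{L}^{\star}-\mathcal{C}-\tfrac{\mu}{3})\theta\,du$ and the fact that $\mathcal{L}^{\star}-\mathcal{C}$ is second order; and the commutator bounds $\|[\mathcal{L}^{\star},m]\theta\|\le C\|m\|_{C^2}\|\theta\|_1$ and $\|[\mathcal{B}^{\star},m]\theta\|\le C\|m\|_{C^1}\|\theta\|$ for a multiplication operator $m$, valid because $[\mathcal{L}^{\star},m]$ is of first order and $[\mathcal{B}^{\star},m]$ of zeroth order, with coefficients assembled from $\nabla m$, $\nabla^2 m$ and the bounded coefficients of $\mathcal{L}^{\star},\mathcal{B}^{\star}$.

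For $\psi$, the reduction above and the commutator bound give $\|\psi(\tau)\|\le C(1+Z_\tau-Z_s)^2\varpi_2^{Z_\tau-Z_s}\|R_\tau^sQ_\tau^s\phi\|_1\,\mathbf{1}_{\{Z_\tau-Z_s\ge1\}}$; squaring, taking $\tilde{E}$, using independence of $Z_\tau-Z_s$ from $R_\tau^sQ_\tau^s\phi$, the Poisson moment estimate and $\tilde{E}\|R_\tau^sQ_\tau^s\phi\|_1^2\le C\tilde{E}\|\phi\|_1^2\le C\tilde{E}\|\phi\|_3^2$ yields $\tilde{E}\|\psi(\tau)\|^2\le C(\tau-s)\tilde{E}\|\phi\|_3^2$, and taking instead the expectation of the norm (no square) gives $\|\tilde{E}\psi(\tau)\|\le\tilde{E}\|\psi(\tau)\|\le C(\tau-s)\tilde{E}\|\phi\|_3$. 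For $\zeta$, split $\zeta(\tau)=\Gamma_\tau^s[R_\tau^s,\mathcal{B}^{\star}]Q_\tau^s\phi+[\Gamma_\tau^s,\mathcal{B}^{\star}]R_\tau^sQ_\tau^s\phi$. The second summand is treated exactly like $\psi$ (it vanishes when $Z_\tau-Z_s=0$ and is bounded by $C(1+Z_\tau-Z_s)\varpi_2^{Z_\tau-Z_s}\|R_\tau^sQ_\tau^s\phi\|\,\mathbf{1}_{\{Z_\tau-Z_s\ge1\}}$), contributing $O(\tau-s)$ to both $\tilde{E}\|\cdot\|^2$ and $\|\tilde{E}(\cdot)\|$. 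For the first summand, write $[R_\tau^s,\mathcal{B}^{\star}]Q_\tau^s\phi=(R_\tau^s-I)\mathcal{B}^{\star}Q_\tau^s\phi-\mathcal{B}^{\star}(R_\tau^s-I)Q_\tau^s\phi$ and bound the two pieces by $C(\tau-s)\|\mathcal{B}^{\star}Q_\tau^s\phi\|_2\le C(\tau-s)\|Q_\tau^s\phi\|_3$ and $C\|(R_\tau^s-I)Q_\tau^s\phi\|_1\le C(\tau-s)\|Q_\tau^s\phi\|_3$; then $\|\Gamma_\tau^s[R_\tau^s,\mathcal{B}^{\star}]Q_\tau^s\phi\|\le C(\tau-s)\varpi_2^{Z_\tau-Z_s}\|Q_\tau^s\phi\|_3$, and taking $\tilde{E}$ of the square (resp. of the norm), using independence and $\tilde{E}\|Q_\tau^s\phi\|_3^2\le C\tilde{E}\|\phi\|_3^2$, gives $C(\tau-s)^2\tilde{E}\|\phi\|_3^2\le C(\tau-s)\tilde{E}\|\phi\|_3^2$ (resp. $C(\tau-s)\tilde{E}\|\phi\|_3$). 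Adding the two summands finishes $\zeta$.

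The step I expect to be the main obstacle is the bookkeeping of measurability and independence under $\tilde P$: before factoring any expectation one must be certain that $Z_\tau-Z_s$ (hence $\Gamma_\tau^s$ and all the jump factors $\lambda^{\,\cdot}$) is genuinely independent of $Q_\tau^s$, of $R_\tau^s$ and of $\phi$, which rests on the decoupling of $Y$ and $Z$ under $\tilde P$ from Lemma \ref{lem:2.1.1}/Proposition \ref{pro:2.1.1} and on $\phi$ being $\mathcal{F}_s$-measurable in the intended application. A secondary technical point is that the $H^3$-level energy estimates for $Q_\tau^s$ and $R_\tau^s$ and the $C^2$ commutator bounds require slightly more smoothness of $\sigma,b,\tilde b,h,\lambda$ than literally listed in {\bf H2}; under those (standard, tacitly assumed) hypotheses all the inequalities above are routine.
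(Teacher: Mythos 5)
Your proof is correct in outline and delivers the same bounds, but it follows a genuinely different route from the paper. The paper never invokes the closed form of $\Gamma_\tau^s$: it expands $\Gamma_\tau^s(\mathcal{L}^{\star}-\mathcal{C})R_\tau^sQ_\tau^s\phi$ and $(\mathcal{L}^{\star}-\mathcal{C})\Gamma_\tau^sR_\tau^sQ_\tau^s\phi$ through the defining integral equation of \eqref{eq:2.2.1-2} (a Duhamel expansion of the jump SDE), so that $\psi(\tau)$ is written as a $d\tau^{\prime}$-integral plus a $dZ_{\tau^{\prime}}$-integral of commutator-type operators $U_1,U_2$ applied to $R_{\tau^{\prime}}^sQ_{\tau^{\prime}}^s\phi$ over $[s,\tau]$; the factor $\tau-s$ then comes from the interval length together with the martingale property of $Z_t-t$ under $\tilde{P}$, and $\zeta$ is treated ``similarly''. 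You instead solve \eqref{eq:2.2.1-2} pathwise, identify $\Gamma_\tau^s$ as multiplication by $e^{-\mu(\tau-s)/3}\lambda^{Z_\tau-Z_s}$, note that the commutators with $\mathcal{L}^{\star}$ and $\mathcal{B}^{\star}$ vanish on $\{Z_\tau-Z_s=0\}$, and extract $\tau-s$ from $\tilde{P}(Z_\tau-Z_s\geq 1)\leq \tau-s$ plus Poisson moment bounds, handling the residual $[R_\tau^s,\mathcal{B}^{\star}]$ piece of $\zeta$ by a near-identity (Duhamel) bound for $R$. Both arguments draw the smallness from the same source (few jumps on a short interval); yours is more explicit and essentially pathwise, makes transparent exactly why the $\Gamma$-commutators are small, and isolates the deterministic $O(\tau-s)$ contribution of $[R,\mathcal{B}^{\star}]$, while the paper's operator-level expansion is shorter to write and would survive if $\mathcal{C}$ were not a multiplication operator. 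The extra regularity you flag as tacit ($\lambda\in C^2$, mean-square $H^2$/$H^3$ stability of $Q_\tau^s$ and $R_\tau^s$) is equally needed, implicitly, for the paper's step $\|U_iR_{\tau^{\prime}}^sQ_{\tau^{\prime}}^s\phi\|\leq C\|\phi\|_3$, so this is not a gap relative to the paper; the one point to make explicit is that when you factor expectations (and when you pass from mean-square stability of $Q_\tau^s$ to bounds on $\tilde{E}\|Q_\tau^s\phi\|_k$ for random $\phi$) you should condition on $\mathcal{F}_s$ and use the independence of $Z_\tau-Z_s$ and of the increments of $Y$ from $\mathcal{F}_s$ under $\tilde{P}$ — precisely the measurability issue you already identified, which is harmless in the intended application $\phi=p_\kappa^r$, $s=t_r$.
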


\begin{proof}
	By  Lemma \ref{lem:2.2.5},  we have
	\[
	\|\mathcal{L}^{\star}\phi\| \leq C\|\phi\|_1, \quad \|\mathcal{B}^{\star}\phi\|\leq C \|\phi\|_1.
      \]
	From \eqref{eq:2.2.1-2} it follows that
	\begin{align*}
	\Gamma_{\tau}^s(\mathcal{L}^{\star}-\mathcal{C})R_{\tau}^sQ_{\tau}^s\phi=&(\mathcal{L}^{\star}-\mathcal{C})R_{\tau}^sQ_{\tau}^s\phi-\frac{\mu}{3}\int_s^{\tau}\Gamma_{\tau^{\prime}}^s(\mathcal{L}^{\star}-\mathcal{C})R_{\tau^{\prime}}^sQ_{\tau^{\prime}}^s\phi
           d\tau^{\prime}
          \\
          &+\int_s^{\tau}\mathcal{C}\Gamma_{\tau^{\prime}}^s(\mathcal{L}^{\star}-\mathcal{C})R_{\tau^{\prime}}^sQ_{\tau^{\prime}}^s\phi dZ_{\tau^{\prime}},
		\\
	(\mathcal{L}^{\star}-\mathcal{C})\Gamma_{\tau}^sR_{\tau}^sQ_{\tau}^s\phi=&(\mathcal{L}^{\star}-\mathcal{C})R_{\tau}^sQ_{\tau}^s\phi-\frac{\mu}{3}\int_s^{\tau}(\mathcal{L}^{\star}-\mathcal{C})\Gamma_{\tau^{\prime}}^sR_{\tau^{\prime}}^sQ_{\tau^{\prime}}^s\phi
                                                                                   d\tau^{\prime}
          \\
          &+\int_s^{\tau}(\mathcal{L}^{\star}-\mathcal{C})\mathcal{C}\Gamma_{\tau^{\prime}}^sR_{\tau^{\prime}}^sQ_{\tau^{\prime}}^s\phi dZ_{\tau^{\prime}}.
	\end{align*}
	Let $U_1=\Gamma_{\tau^{\prime}}^s(\mathcal{L}^{\star}-\mathcal{C})-(\mathcal{L}^{\star}-\mathcal{C})\Gamma_{\tau^{\prime}}^s$, $U_2=\mathcal{C}\Gamma_{\tau^{\prime}}^s(\mathcal{L}^{\star}-\mathcal{C})-(\mathcal{L}^{\star}-\mathcal{C})\mathcal{C}\Gamma_{\tau^{\prime}}^s$, then
	\begin{equation}\label{eq:2.3.26-2}
		\psi(\tau)=-\frac{\mu}{3}\int_s^{\tau}U_1R_{\tau^{\prime}}^sQ_{\tau^{\prime}}^s\phi d\tau^{\prime}+\int_s^{\tau}U_2R_{\tau^{\prime}}^sQ_{\tau^{\prime}}^s\phi dZ_{\tau^{\prime}}.
              \end{equation}
	Since $Z_{\tau^{\prime}}-\tau^{\prime}$ is a martingale, we have
	\begin{equation}\label{eq:2.3.26-1}
          \begin{aligned}
          \tilde{E}\|\psi(\tau)\|^2\leq& 2\tilde{E}\left(\int_s^{\tau}\|-\frac{\mu}{3}U_1R_{\tau^{\prime}}^sQ_{\tau^{\prime}}^s\phi\|
            d\tau^{\prime}\right)^2+4\tilde{E}\int_s^{\tau}\|U_2R_{\tau^{\prime}}^sQ_{\tau^{\prime}}^s\phi\|^2d\tau^{\prime}
          \\
          &+4\tilde{E}\left(\int_s^{\tau}\|U_2R_{\tau^{\prime}}^sQ_{\tau^{\prime}}^s\phi\| d\tau^{\prime}\right)^2
          \\
          \leq& C(\tau-s)\tilde{E}\|\phi\|_3^2
          \end{aligned}
        \end{equation}

 From \eqref{eq:2.3.26-2}, we have 
 \begin{align*}
\|\tilde{E}(\psi(\tau))\|\leq &
                                \frac{\mu}{3}\int_s^{\tau}\|\tilde{E}(U_1R_{\tau^{\prime}}^sQ_{\tau^{\prime}}^s\phi)\|d\tau^{\prime}
                                +\int_s^{\tau}\|\tilde{E}(U_2R_{\tau^{\prime}}^sQ_{\tau^{\prime}}^s\phi)\|d\tau^{\prime}
  \\
  \leq &C(\tau-s)\tilde{E}\|\phi\|_3.
\end{align*}

Similarly, we obtain the estimates for $\zeta$.

\end{proof}

\begin{theorem}\label{p1.4.4}
	Assume that {\bf H1}-{\bf H3} hold and  $p(t)\in V\cap H^3$ is the solution of \eqref{eq:2.1.23}. Then $p_{\kappa}^{r+1}$ converges to  $p(t_{r+1})$ as $\kappa\to 0$, and satisfies
	\begin{align*}
		(\tilde{E}\|p(t_{r+1})-p_{\kappa}^{r+1}\|^2)^{1/2} &\leq C\kappa^{1/2},
		\\
		\|\tilde{E}(p(t_{r+1})-p_{\kappa}^{r+1})\| & \leq C\kappa.
	\end{align*}
\end{theorem}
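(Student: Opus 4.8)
The plan is a consistency-plus-stability argument in which the per-step consistency error is precisely the pair of commutators $\psi,\zeta$ estimated in Lemma~\ref{lem:1.3.4}. Fix $r$, put $s=t_r$, and for $\phi\in V\cap H^3$ let $v(\tau)=\Gamma_\tau^{s}R_\tau^{s}Q_\tau^{s}\phi$ be one splitting step started from $\phi$, and $\bar p(\tau)$ the solution of \eqref{eq:2.1.23} on $[t_r,t_{r+1}]$ with $\bar p(t_r)=\phi$. Applying It\^o's product formula to the composition $\Gamma_\tau^{s}R_\tau^{s}Q_\tau^{s}\phi$ and using \eqref{eq:2.2.1}--\eqref{eq:2.2.1-2} (the pure-jump factor $\Gamma$ has no continuous part, so no cross variation arises among the three factors), one obtains
\begin{equation*}
	dv=\big(-\mu v+\Gamma_\tau^{s}(\mathcal{L}^{\star}-\mathcal{C})R_\tau^{s}Q_\tau^{s}\phi\big)d\tau+\Gamma_\tau^{s}R_\tau^{s}\mathcal{B}^{\star}Q_\tau^{s}\phi\,dY_\tau+\mathcal{C}\Gamma_{\tau-}^{s}R_\tau^{s}Q_\tau^{s}\phi\,dZ_\tau.
\end{equation*}
Subtracting \eqref{eq:2.1.23} for $\bar p$ and recognizing the two commutators as $\psi(\tau)$ and $\zeta(\tau)$ of Lemma~\ref{lem:1.3.4}, the local error $\theta:=v-\bar p$ solves the linear equation
\begin{equation*}
	d\theta+\mu\theta\,d\tau=(\mathcal{L}^{\star}-\mathcal{C})\theta\,d\tau+\mathcal{B}^{\star}\theta\,dY_\tau+\mathcal{C}\theta_-\,dZ_\tau+\psi(\tau)\,d\tau+\zeta(\tau)\,dY_\tau,\qquad\theta(t_r)=0.
\end{equation*}

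Next I would estimate $\theta$ by the energy method for this linear equation: It\^o on $\|\theta\|^2$, absorbing $\langle\theta,(\mathcal{L}^{\star}-\mathcal{C})\theta\rangle$ and $\vert\mathcal{B}^{\star}\theta\vert_D^2$ via Lemma~\ref{lem:2.2.5}, \eqref{eq:2.1.22} and the dissipation from a large $\mu$, compensating the $dZ_\tau$ term, and absorbing the $\psi$-forcing into the coercivity while treating $\vert\zeta\vert_D^2\le C\|\zeta\|^2$ as a source; combined with $\tilde E\|\psi(\tau)\|^2,\tilde E\|\zeta(\tau)\|^2\le C(\tau-t_r)\tilde E\|\phi\|_3^2$ (Lemma~\ref{lem:1.3.4}) and Gronwall, this gives, with $\phi=p(t_r)$,
\begin{equation*}
	\tilde E\|\theta(t_{r+1})\|^2\le C\kappa^{2}\sup_{t\in[0,T]}\tilde E\|p(t)\|_3^2 .
\end{equation*}
Taking $\tilde E[\cdot\vert\mathcal{F}_{t_r}]$ of the $\theta$-equation removes the two $dY_\tau$-integrals (they are $\mathcal{F}_t$-martingales) and converts $\mathcal{C}\theta_-\,dZ_\tau$ into $\mathcal{C}\tilde E[\theta\vert\mathcal{F}_{t_r}]\,d\tau$, so $\tilde E[\theta\vert\mathcal{F}_{t_r}]$ solves a deterministic parabolic equation forced only by $\tilde E[\psi(\tau)\vert\mathcal{F}_{t_r}]$; by Duhamel and $\|\tilde E[\psi(\tau)\vert\mathcal{F}_{t_r}]\|\le C(\tau-t_r)\|p(t_r)\|_3$ one then gets $\|\tilde E[\theta(t_{r+1})\vert\mathcal{F}_{t_r}]\|\le C\kappa^{2}\|p(t_r)\|_3$.

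Finally I would telescope. With $\Phi^i:=\Gamma_{t_{i+1}}^{t_i}R_{t_{i+1}}^{t_i}Q_{t_{i+1}}^{t_i}$ and $\ell^{i+1}:=p(t_{i+1})-\Phi^ip(t_i)$ (the error of the previous step with $\phi=p(t_i)$), linearity of the scheme and $p_\kappa^0=p(0)$ give $p(t_{r+1})-p_\kappa^{r+1}=\sum_{i=0}^{r}\Phi^r\cdots\Phi^{i+1}\ell^{i+1}$. By Theorem~\ref{thm:2.2.1} (for $\mu$ large) each product $\Phi^r\cdots\Phi^{i+1}$ is bounded on $L^2(\Omega;H)$ uniformly in $i,r,\kappa$, and it depends only on the $(Y,Z)$-increments over $[t_{i+1},t_{r+1}]$, hence is independent of $\mathcal{F}_{t_{i+1}}$, while $\ell^{i+1}$ is $\mathcal{F}_{t_{i+1}}$-measurable. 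For the mean estimate, take full expectation: independence gives $\tilde E[\Phi^r\cdots\Phi^{i+1}\ell^{i+1}]=\tilde E[\Phi^r\cdots\Phi^{i+1}]\,\tilde E[\ell^{i+1}]$, the expected operator is uniformly bounded, $\|\tilde E\ell^{i+1}\|\le\tilde E\|\tilde E[\ell^{i+1}\vert\mathcal{F}_{t_i}]\|\le C\kappa^{2}$, and summing the $r+1\le N=T/\kappa$ terms yields $\|\tilde E(p(t_{r+1})-p_\kappa^{r+1})\|\le C\kappa$. For the mean-square estimate, expand $\tilde E\|\sum_i\Phi^r\cdots\Phi^{i+1}\ell^{i+1}\|^2$: the diagonal is $\le C\sum_i\tilde E\|\ell^{i+1}\|^2\le CN\kappa^{2}=O(\kappa)$; for the off-diagonal terms, write $\ell^{i+1}=\tilde E[\ell^{i+1}\vert\mathcal{F}_{t_i}]+(\ell^{i+1}-\tilde E[\ell^{i+1}\vert\mathcal{F}_{t_i}])$ and condition successively on $\mathcal{F}_{t_{j+1}}$ and $\mathcal{F}_{t_i}$, so that the conditionally centered parts produce no leading contribution and the remaining terms, using the $O(\kappa^2)$ bias bound together with the $L^2$ local-error bound, sum to $O(\kappa)$; hence $\tilde E\|p(t_{r+1})-p_\kappa^{r+1}\|^2\le C\kappa$. (Convergence itself is already contained in Theorem~\ref{thm:2.3.1}.)

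The hard part will be the accumulation in the last step: the stochastic component of each local error is only $O(\kappa)$ in $L^2(\Omega)$ — one power short of what a term-by-term summation of $N=T/\kappa$ errors would tolerate — so the half-order rate genuinely relies on the martingale/independent-increments structure, through which the conditionally centered local errors accumulate in quadrature, $\sqrt N\,\kappa=O(\kappa^{1/2})$, rather than linearly. Bounding the off-diagonal products, where the future propagators $\Phi^r\cdots\Phi^{i+1}$ are correlated with the later error $\ell^{j+1}$ through the shared increments on $[t_j,t_{j+1}]$, is the delicate computation, and it is precisely there that both the $L^2$ bound and the sharper bound on the conditional mean in Lemma~\ref{lem:1.3.4} are needed.
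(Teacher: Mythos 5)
Your local (one-step) analysis is exactly the paper's: the same equation for the splitting flow with the commutator forcings $\psi,\zeta$, the same energy/Gronwall estimate giving a mean-square local error $O(\kappa)$ and a conditional bias $O(\kappa^2)$ via Lemma~\ref{lem:1.3.4} (your conditional version of that lemma and the boundedness of $\tilde E[\Phi^r\cdots\Phi^{i+1}]$ are easy supplements). Where you diverge is the accumulation, and that is where your proposal has a genuine gap. For the mean-square bound you telescope with the random propagators and must control the off-diagonal terms $\tilde E\langle \Phi^r\cdots\Phi^{i+1}\ell^{i+1},\,\Phi^r\cdots\Phi^{j+1}\ell^{j+1}\rangle$, in which the propagator of the earlier error shares the increments on $[t_j,t_{j+1}]$ with $\ell^{j+1}$. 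Your prescription (split $\ell$ into conditional mean plus centered part and condition successively) does not dispose of this: after conditioning on $\mathcal F_{t_{j+1}}$ and writing $\Phi^j=\tilde E[\Phi^j]+(\Phi^j-\tilde E[\Phi^j])$, the term with $\tilde E[\Phi^j]$ is indeed $O(\kappa^3)$ per pair via the bias bound, but the fluctuation term $\tilde E\langle M(\Phi^j-\tilde E[\Phi^j])v,\ell^{j+1}\rangle$ (with $v$ the propagated earlier local error, $\|v\|_{L^2(\Omega;H)}=O(\kappa)$) is only $O(\kappa^{1/2}\cdot\kappa\cdot\kappa)=O(\kappa^{5/2})$ by Cauchy--Schwarz; summed over $O(\kappa^{-2})$ pairs this gives $O(\kappa^{1/2})$, i.e.\ only a quarter-order rate. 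Recovering $O(\kappa)$ from these cross terms needs a finer cancellation (an It\^o-isometry pairing of the fluctuation of $\Phi^j$ with the martingale part of $\ell^{j+1}$, plus $V$-regularity of the propagated local errors), none of which is supplied; so, as written, the mean-square half of the theorem is not proved.

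The paper avoids this difficulty entirely, and you could too with a one-line change: instead of comparing the splitting step and the exact flow launched from the \emph{same} datum $p(t_i)$ and then telescoping, run your energy estimate on $\gamma(t)=\Gamma_t^{t_i}R_t^{t_i}Q_t^{t_i}p_\kappa^i-p(t)$, i.e.\ put the accumulated global error into the initial condition. The identical Gronwall argument then yields the recursion
\begin{equation*}
\tilde E\|p_\kappa^{i+1}-p(t_{i+1})\|^2\le\big(\tilde E\|p_\kappa^{i}-p(t_{i})\|^2+C\kappa^2\,\tilde E\|p_\kappa^{i}\|_3^2\big)e^{C\kappa},
\end{equation*}
(this is \eqref{eq:2.3.26}), whose iteration gives $O(\kappa)$ for the squared error directly: the local sources enter additively in the squared norm, the propagated error is absorbed into the $e^{C\kappa}$ factor by Young's inequality, and no independence, conditioning, or off-diagonal analysis is needed. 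The bias estimate is obtained the same way from the recursion for $\|\tilde E(\gamma)\|$. (One incidental advantage of your telescoped version of the \emph{bias} estimate is that it only requires $\sup_t\tilde E\|p(t)\|_3^2$ rather than $H^3$ bounds on the iterates $p_\kappa^i$, but that does not rescue the mean-square part.)
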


\begin{proof}
	Let $\varepsilon(t)=\Gamma_t^sR_t^sQ_t^s\phi$ for $\phi\in C_0^{\infty}$ and $0\leq s < t\leq T$.  Then
	\begin{align*}
		d\varepsilon(t)={}&\Gamma_t^s(-\frac{\mu}{3}R_t^sQ_t^s\phi dt+(\mathcal{L}^{\star}-\mathcal{C})R_t^sQ_t^s\phi dt+R_t^s(-\frac{\mu}{3}Q_t^s\phi dt+\mathcal{B}^{\star}Q_t^s\phi dY_t))
		\\
		&-\frac{\mu}{3}\varepsilon(t)dt+\mathcal{C}\varepsilon(t-)dZ_t
		\\
		={}&-\mu\varepsilon(t)dt+\Gamma_t^s(\mathcal{L}^{\star}-\mathcal{C})R_t^sQ_t^s\phi dt+\Gamma_t^sR_t^s\mathcal{B}^{\star}Q_t^s\phi dY_t+\mathcal{C}\varepsilon(t-)dZ_t.
		\\
		={}&-\mu\varepsilon(t)dt+(\mathcal{L}^{\star}-\mathcal{C})\varepsilon(t)dt+\mathcal{B}^{\star}\varepsilon(t)dY_t+\mathcal{C}\varepsilon(t-)dZ_t
		\\
		&+\psi(t) dt+\zeta(t) dY_t.
	\end{align*}
	Let $\gamma(t)=\varepsilon(t)-p(t)$. By It\^o's formula, we have
	\begin{align*}
		\tilde{E}\|\gamma(t)\|^2 &= \tilde{E}\|\gamma(s)\|^2-2\mu\tilde{E}\int_s^t\|\gamma(\tau)\|^2d\tau+2\tilde{E}\int_s^t\langle (\mathcal{L}^{\star}-\mathcal{C})\gamma(\tau), \gamma(\tau)\rangle d\tau
		\\
		&+2\tilde{E}\int_s^t(\gamma(\tau),\psi(\tau))d\tau + \int_s^t\|\mathcal{B}^{\star}\gamma(\tau)+\zeta(\tau)\|^2d\tau+\int_s^t(\lambda^2-1)\|\gamma(\tau)\|^2d\tau.
	\end{align*}
	From Lemma \ref{lem:2.2.5} and inequality \eqref{eq:2.1.22}, we have 
	\begin{equation*}
		\tilde{E}\|\gamma(t)\|^2\leq \tilde{E}\|\gamma(s)\|^2+C\int_s^t\tilde{E}\|\gamma(\tau)\|^2d\tau +\int_s^t\tilde{E}\|\psi(\tau)\|^2d\tau + 2\int_s^t\tilde{E}\|\zeta(\tau)\|^2d\tau.
	\end{equation*}
	Applying  the Gronwall lemma and Lemma \ref{lem:1.3.4} gives
	\begin{align*}
		\tilde{E}\|\gamma(t)\|^2
		&\leq \left(\tilde{E}\|\gamma(s)\|^2+\int_s^t\tilde{E}\|\psi(\tau)\|^2d\tau +2\int_s^t\tilde{E}\|\zeta(\tau)\|^2d\tau \right) e^{C(t-s)}
		\\
		& \leq (\tilde{E}\|\gamma(s)\|^2+C(t-s)^2\tilde{E}\|\phi\|_3^2)e^{C(t-s)}.
	\end{align*}
	Taking $s=t_i$, $t=t_{i+1}$ and $\phi=p_\kappa^i$, we obtain
	\begin{equation}\label{eq:2.3.26}
		\tilde{E}\|p_{\kappa}^{i+1}-p(t_{i+1})\|^2\leq (\tilde{E}\|p_{\kappa}^i-p(t_i)\|^2+C\kappa^2\tilde{E}\|p_{\kappa}^i\|_3^2)e^{C\kappa}.
	\end{equation}
Iterating the above equation in $i$ from $i=0$ to $r$ and applying Theorem \ref{thm:2.2.1}, we have
	\begin{align*}
          &\tilde{E}\|p_{\kappa}^{r+1}-p(t_{r+1})\|^2
          \\
          \leq &\left[(\tilde{E}\|p_{\kappa}^{r-1}-p(t_{r-1})\|^2+C\kappa^2\tilde{E}\|p_{\kappa}^{r-1}\|_3^2)e^{C\kappa}+C\kappa^2\tilde{E}\|p_{\kappa}^r\|_3^2\right]e^{C\kappa}
          \\
          \leq &\cdots \leq C\kappa^2\sum\limits_{i=0}^{r}\tilde{E}\|p_{\kappa}^i\|_3^2e^{CT}
		\leq C\kappa.
	\end{align*}
        
Noticing
\begin{equation*}
\tilde{E}(\gamma(t))=\tilde{E}(\gamma(s))-\mu\int_s^t\tilde{E}(\gamma(\tau))d\tau+\int_s^t\tilde{E}(\mathcal{L}^{\star}\gamma(\tau))d\tau+\int_s^t\tilde{E}(\psi(\tau))d\tau,
\end{equation*}
we have
\begin{equation*}
\|\tilde{E}(\gamma(t))\|\leq \|\tilde{E}(\gamma(s))\|+C\int_s^t\|\tilde{E}(\gamma(\tau))\|d\tau+\int_s^t\|\tilde{E}(\psi(\tau))\|d\tau.
\end{equation*}
Then by the Gronwall lemma, from Lemma \ref{lem:1.3.4}, we have
\begin{align*}
  \|\tilde{E}(\gamma(t))\|&\leq (\|\tilde{E}(\gamma(s))\|+\int_s^t\|\tilde{E}(\psi(\tau))\|d\tau)e^{C(t-s)}
  \\
  &\leq (\|\tilde{E}(\gamma(s))\| + C(t-s)^2\tilde{E}\|\phi\|_3)e^{C(t-s)}.
\end{align*}
Taking $s=t_i, t=t_{i+1}$ and $\phi=p_{\kappa}^i$, we obtain
\begin{equation}\label{eq:4.7}
\|\tilde{E}(p_{\kappa}^{i+1}-p(t_{i+1}))\|\leq (\|\tilde{E}(p_{\kappa}^i-p(t_i))\|+C\kappa^2\tilde{E}\|p_{\kappa}^i\|_3)e^{C\kappa}.
\end{equation}
Integrating the above equation in $i$ from $i=0$ to $r$ and applying Theorem \ref{thm:2.2.1}, we have
\begin{align*}
  &\|\tilde{E}(p_{\kappa}^{r+1}-p(t_{r+1}))\|
  \\
  \leq &\left[ (\|\tilde{E}(p_{\kappa}^{r-1}-p(t_{r-1}))\|+C\kappa^2\tilde{E}\|p_{\kappa}^{r-1}\|_3)e^{C\kappa}+C\kappa^2\tilde{E}\|p_{\kappa}^r\|_3\right]e^{C\kappa}
  \\
  \leq & \cdots \leq C\kappa^2\sum\limits_{i=0}^{r}\tilde{E}\|p_{\kappa}^i\|_3e^{CT}\leq C\kappa.
\end{align*}

The proof is complete.
\end{proof}

{\bf Remark} We note that $p_{1\kappa}$, $p_{2\kappa}$ and $p_{3\kappa}$ are defined by the continuous solution operators $Q_t^s$, $R_t^s$ and $\Gamma_t^s$ for
\eqref{eq:2.2.1}, \eqref{eq:2.2.1-1} and \eqref{eq:2.2.1-2}, respectively. They are splitting up solutions for continuous problems, not numerical ones.
In the next section, we will consider temporal discretizations of \eqref{eq:2.2.1}, \eqref{eq:2.2.1-1}, \eqref{eq:2.2.1-2} and construct semi-discretized splitting-up approximations for the exact solution $p$. 

\section{Semi-discretization and error analysis}\label{sec4}
In this section, we construct  a  semi-discretized splitting-up  scheme by discretizing   \eqref{eq:2.2.1}-\eqref{eq:2.2.1-2} with the finite difference method and  investigate its error estimate.

On each interval $[t_r,t_{r+1}]$  $(r=0, 1, 2, \cdots, N-1)$, we apply the Euler-Maruyama scheme to \eqref{eq:2.2.1},  backward implicit Euler method to  (\ref{eq:2.2.1-1}) and forward explicit Euler method to (\ref{eq:2.2.1-2}) to obtain a semi-discrete scheme:
\begin{align}\label{alg:2.6.1}
	p_{1\kappa,r+1}-p_{1\kappa,r}
	&=-\frac{\mu}{3}p_{1\kappa,r}\kappa+\mathcal{B}^{\star}p_{1\kappa,r}(Y_{t_{r+1}}-Y_{t_r}),
	\\
	\label{alg:2.6.2}
	p_{2\kappa,r+1}-p_{2\kappa,r}
	&=((\mathcal{L}^{\star}-\mathcal{C})p_{2\kappa,r+1}-\frac{\mu}{3}p_{2\kappa,r+1})\kappa,
	\\
	\label{alg:2.6.3}
	p_{3\kappa,r+1}-p_{3\kappa,r}
	&=-\frac{\mu}{3}p_{3\kappa,r}\kappa+\mathcal{C}p_{3\kappa,r}(Z_{t_{r+1}}-Z_{t_r}), 
\end{align}
where $Z_{t_{r+1}}-Z_{t_r}$ is the number of  jumps of Poisson process $Z_t$ within  time interval $[t_r, t_{r+1}]$. The iterative solutions  $p_{i\kappa,r}$ of equations \eqref{alg:2.6.1}-\eqref{alg:2.6.3} are numerical  approximations to  $p_{i\kappa}(t_r)$ for $i=1,2,3$. And each jump quantity is approximated by $\mathcal{C}p_{3\kappa,r}=(\lambda-1)p_{3\kappa,r}$.

Let $\bar{Q}_{t_{r+1}}^{t_r}$, $\bar{R}_{t_{r+1}}^{t_r}$ and  $\bar{\Gamma}_{t_{r+1}}^{t_r}$ successively denote the solution operators of above equations. In terms of these settings, we define  a
discrete splitting-up approximate solution $p_{\kappa,r}$ $(r=0, 1, 2, \cdots, N)$ of Zakai equation \eqref{eq:2.1.20} as 
\begin{equation}\label{alg:2.6.4}
	p_{\kappa,r+1}=\bar{\Gamma}_{t_{r+1}}^{t_r} \bar{R}_{t_{r+1}}^{t_r}\bar{Q}_{t_{r+1}}^{t_r}p_{\kappa, r},
	\quad p_{\kappa,0}=p_0.
\end{equation}

Now, we are ready to state and prove the main result of this work.

\begin{theorem}
	\label{thm:2.6.1-1}
	Assume {\bf H1}-{\bf H3}. Then the discrete splitting-up solution  $p_{\kappa,r+1}$ converges to the exact solution $p(t_{r+1})$  as $\kappa \to 0$ and satisfies
	\begin{equation}\label{eq:2.6-1}
		\{\tilde{E}\|p(t_{r+1})-p_{\kappa,r+1}\|^2\}^{1/2}\leq C\sqrt{\kappa}.
	\end{equation}
\end{theorem}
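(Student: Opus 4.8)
Write-up plan. The strategy is to insert the continuous splitting solution $p_\kappa^{r+1}$ of Section~3 between $p(t_{r+1})$ and the fully discrete solution $p_{\kappa,r+1}$, so that by the triangle inequality
\[
\tilde E\|p(t_{r+1})-p_{\kappa,r+1}\|^2\le 2\,\tilde E\|p(t_{r+1})-p_\kappa^{r+1}\|^2+2\,\tilde E\|p_\kappa^{r+1}-p_{\kappa,r+1}\|^2 .
\]
By Theorem~\ref{p1.4.4} the first term is already $O(\kappa)$, so the whole problem is reduced to controlling the discrepancy between iterating the continuous one-step operator $\mathcal S_{t_{r+1}}^{t_r}:=\Gamma_{t_{r+1}}^{t_r}R_{t_{r+1}}^{t_r}Q_{t_{r+1}}^{t_r}$ and the discrete one $\bar{\mathcal S}_{t_{r+1}}^{t_r}:=\bar\Gamma_{t_{r+1}}^{t_r}\bar R_{t_{r+1}}^{t_r}\bar Q_{t_{r+1}}^{t_r}$.

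First I would record the $L^2(\Omega;H)$-stability of the discrete one-step map, $\tilde E\|\bar{\mathcal S}_{t_{r+1}}^{t_r}\phi\|^2\le(1+C\kappa)\tilde E\|\phi\|^2$, by the same energy argument that produced \eqref{eq:2.2.8}--\eqref{eq:2.2.9}: for $\bar R$ one uses the discrete coercivity inherited from Lemma~\ref{lem:2.2.5} (backward Euler is unconditionally stable, contractive for $\mu$ large), for $\bar Q$ that $\tfrac23\mu$ dominates $|\mathcal B^\star\cdot|_D^2$, and for $\bar\Gamma$ the boundedness of $\lambda$ together with the fact that $Z$ has only finitely many jumps on $[0,T]$; multiplying over steps bounds any product of discrete maps by $e^{CT}$. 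Next I would establish the \emph{local consistency} estimates by splitting $\mathcal S-\bar{\mathcal S}=(\Gamma-\bar\Gamma)RQ+\bar\Gamma(R-\bar R)Q+\bar\Gamma\bar R(Q-\bar Q)$ and treating each factor: (i) $R-\bar R$ is the deterministic backward-Euler defect, of size $O(\kappa^2)$ in $H$ given sufficient spatial regularity of the argument (this is where the higher-order a priori bounds of Section~3 and parabolic regularity of the resolvent enter); (ii) $\Gamma-\bar\Gamma$ is the forward-Euler defect for \eqref{eq:2.2.1-2}, which is $O(\kappa^2)$ on a jump-free step and $O(\kappa)$ on a step containing a jump, so since $\tilde P(\Delta Z_r\ge1)=O(\kappa)$ and $\tilde P(\Delta Z_r\ge2)=O(\kappa^2)$ its $L^2$-size is $O(\kappa^{3/2})$; (iii) $Q-\bar Q$ is the Euler--Maruyama defect for \eqref{eq:2.2.1}, whose leading part is the omitted Milstein double integral $\sim\tfrac12(\mathcal B^\star)^2\phi\,((\Delta Y)^2-\kappa\,\mathrm{Id})$, which has conditional mean zero given $\mathcal F_{t_r}$ and $L^2$-size $O(\kappa)$, the remaining terms being $O(\kappa^{3/2})$ in $L^2$ with conditional mean $O(\kappa^2)$.

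I would then close the argument with a discrete energy recursion rather than a naive summation. From $p_\kappa^{r+1}-p_{\kappa,r+1}=\bar{\mathcal S}_{t_{r+1}}^{t_r}(p_\kappa^{r}-p_{\kappa,r})+\eta_r$ with $\eta_r:=(\mathcal S_{t_{r+1}}^{t_r}-\bar{\mathcal S}_{t_{r+1}}^{t_r})p_\kappa^r$, expand $\tilde E\|p_\kappa^{r+1}-p_{\kappa,r+1}\|^2$, use stability on the first term, bound $\tilde E\|\eta_r\|^2\le C\kappa^2(1+\tilde E\|p_\kappa^r\|_3^2)$, and---crucially---show the cross term $\tilde E\langle\bar{\mathcal S}_{t_{r+1}}^{t_r}(p_\kappa^{r}-p_{\kappa,r}),\eta_r\rangle\le C\kappa\,\tilde E\|p_\kappa^{r}-p_{\kappa,r}\|^2+C\kappa^2$, where the conditional-mean-zero structure of the Milstein remainder (and vanishing of the odd Gaussian moments of $\Delta Y$) eliminates the dangerous $O(\kappa)$ contribution and leaves only absorbable $O(\kappa^2)$ terms. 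Summing the resulting recursion $\tilde E\|p_\kappa^{r+1}-p_{\kappa,r+1}\|^2\le(1+C\kappa)\tilde E\|p_\kappa^{r}-p_{\kappa,r}\|^2+C\kappa^2(1+\tilde E\|p_\kappa^r\|_3^2)$ over $r$, together with the $\kappa$-uniform a priori bounds on $p_\kappa^r$, yields $\tilde E\|p_\kappa^{r+1}-p_{\kappa,r+1}\|^2\le C\kappa$; combined with Theorem~\ref{p1.4.4} this gives \eqref{eq:2.6-1}.

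The main obstacle is precisely the Euler--Maruyama term in (iii): its one-step $L^2$-error is only $O(\kappa)$, not $O(\kappa^{3/2})$, so summing it crudely over the $N=T/\kappa$ steps would produce an $O(1)$ bound and kill convergence. The entire argument hinges on arranging the recursion so that this term enters only through its second moment (contributing $O(\kappa^2)$ per step, hence $O(\kappa)$ in total) and through cross terms that vanish by conditional-mean-zero and symmetry considerations---which is delicate because the discrete evolution maps $\bar{\mathcal S}_{t_{r+1}}^{t_r}$ are themselves random and non-commuting. A secondary technical point is to secure enough spatial regularity of the continuous splitting solution $p_\kappa^r$ ($H^3$- or $H^4$-type a priori bounds uniform in $\kappa$, obtained by pushing $p_0$ through $Q$, $R$, $\Gamma$) to legitimize the $O(\kappa^2)$ local defects of the backward- and forward-Euler substeps.
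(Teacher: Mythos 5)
Your proposal is sound, but it reaches \eqref{eq:2.6-1} by a different route than the paper. The paper works entirely at the level of the \emph{local} (one-step) error of the fully discrete map against the exact solution: assuming $p_{\kappa,r}=p(t_r)$, it decomposes $p(t_{r+1})-\bar{\Gamma}_{t_{r+1}}^{t_r}\bar{R}_{t_{r+1}}^{t_r}\bar{Q}_{t_{r+1}}^{t_r}p(t_r)$ into the continuous-splitting local error (estimates \eqref{eq:2.3.26} and \eqref{eq:4.7}) plus the three substep replacements $Q\to\bar Q$, $R\to\bar R$, $\Gamma\to\bar\Gamma$ (the terms $S_0,\dots,S_3$ and $S_0',\dots,S_3'$), obtaining local mean-square order $\kappa$ and local mean order $\kappa^2$, and then invokes Milstein's fundamental theorem to convert these into global order $\kappa^{1/2}$. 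You instead split globally, reusing Theorem \ref{p1.4.4} for $p(t_{r+1})-p_{\kappa}^{r+1}$, and then handle $p_{\kappa}^{r+1}-p_{\kappa,r+1}$ by an explicit stability-plus-consistency energy recursion in which the cross term is tamed by the conditional-mean-zero structure of the Euler--Maruyama defect; this is in effect a hand-rolled proof of Milstein's theorem specialized to the present operators, so the mathematical content (stability of $\bar\Gamma\bar R\bar Q$, mean-square consistency of order $\kappa$, weak/mean consistency of order $\kappa^2$) coincides with the paper's $S_i$, $S_i'$ estimates, only the local-to-global packaging differs. What your route buys is self-containedness and a transparent view of why the $O(\kappa)$ Euler--Maruyama defect does not destroy convergence; what it costs is that you must carry out the delicate cross-term argument with the random, non-commuting maps $\bar{\mathcal S}_{t_{r+1}}^{t_r}$ yourself (writing $\bar{\mathcal S}=I+(\bar{\mathcal S}-I)$ and using the conditional mean bound on $\eta_r$ given $\mathcal F_{t_r}$, which is legitimate since the increments of $Y$ and $Z$ over $[t_r,t_{r+1}]$ are $\tilde P$-independent of $\mathcal F_{t_r}$), whereas the paper outsources exactly this step to the cited theorem. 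Both routes share the same unstated caveat: the $O(\kappa^2)$ local defects of the backward-Euler and Euler--Maruyama substeps and the bound of Theorem \ref{p1.4.4} require higher spatial regularity ($H^3$/$H^4$-type bounds on $p_\kappa^r$ uniform in $\kappa$) beyond the bare {\bf H1}--{\bf H3} stated in the theorem, so your ``secondary technical point'' is not a defect of your approach relative to the paper's.
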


\begin{proof}
	By  the Milstein Theorem in \cite[Theorem 1.1, page 12]{MiGN}, we only need to show that 
	\begin{align*}
		&\{\tilde{E}\|p(t_{r+1})-p_{\kappa,r+1}\|^2\}^{1/2}\leq C\|p(t_r)\|\kappa,
		\\
		&\|\tilde{E}(p(t_{r+1})-\bar{\Gamma}_{t_{r+1}}^{t_r}\bar{R}_{t_{r+1}}^{t_r}\bar{Q}_{t_{r+1}}^{t_r}p(t_r))\|\leq C\|p(t_r)\|\kappa^2.	
	\end{align*}
First, we estimate the truncation error in the mean square sense.  Assume   $p(t_r)=p_{\kappa,r}$, then 
	\begin{align*}
		&\tilde E \|p(t_{r+1})-P_{\kappa,r+1}\|^2=
		\tilde{E}\|p(t_{r+1})-\bar{\Gamma}_{t_{r+1}}^{t_r} \bar{R}_{t_{r+1}}^{t_r}\bar{Q}_{t_{r+1}}^{t_r}p(t_r)\|^2
		\\
		&\leq 3\tilde{E}\|p(t_{r+1})-\Gamma_{t_{r+1}}^{t_r} R_{t_{r+1}}^{t_r}Q_{t_{r+1}}^{t_r}p(t_r)\|^2
		\\
		&~~+3\tilde{E}\|\Gamma_{t_{r+1}}^{t_r}
		R_{t_{r+1}}^{t_r}Q_{t_{r+1}}^{t_r}p(t_r)-\Gamma_{t_{r+1}}^{t_r}R_{t_{r+1}}^{t_r}\bar{Q}_{t_{r+1}}^{t_r}p(t_r)\|^2
		\\
		&~~+3\tilde{E}\|\Gamma_{t_{r+1}}^{t_r}R_{t_{r+1}}^{t_r}\bar{Q}_{t_{r+1}}^{t_r}p(t_r)-\Gamma_{t_{r+1}}^{t_r}\bar{R}_{t_{r+1}}^{t_r}\bar{Q}_{t_{r+1}}^{t_r}p(t_r)\|^2
		\\
		&~~+3\tilde{E}\|\Gamma_{t_{r+1}}^{t_r}\bar{R}_{t_{r+1}}^{t_r}\bar{Q}_{t_{r+1}}^{t_r}p(t_r)-\bar{\Gamma}_{t_{r+1}}^{t_r}\bar{R}_{t_{r+1}}^{t_r}\bar{Q}_{t_{r+1}}^{t_r}p(t_r)\|^2
		\\
		&:=S_0+S_1+S_2+S_3.
	\end{align*}
	Inequality  \eqref{eq:2.3.26} directly implies that 
	$S_0\leq C\kappa^2\|p(t_r)\|^2$.
	From \cite[Theorem 4.6]{GLMV2011} and \cite[Theorem 67]{Protter2005} it follows that
	\begin{equation}\label{eq:2.6.1-1}
		\tilde{E}\|\Gamma_{t_{r+1}}^{t_r}\phi\|^2\leq C\tilde{E}\|\phi\|^2,\quad \tilde{E}\|R_{t_{r+1}}^{t_r}\phi\|^2\leq C\tilde{E}\|\phi\|^2,\quad \forall\phi\in L^2(\Omega;H),
	\end{equation}
	which leads to
	\begin{equation*}
		S_1\leq C\tilde{E} \|Q_{t_{r+1}}^{t_r}p(t_r)-\bar{Q}_{t_{r+1}}^{t_r}p(t_r)\|^2\leq C\|p(t_r)\|^2\kappa^2.
	\end{equation*}
	Here we have used the convergence theorem of the Euler-Maruyama method  \cite{KaSh1995}.
	Similarly, we obtain
	$S_3\leq C\|p(t_r)\|^2\kappa^2$.
	
	According to the convergence theorem  of the backward Euler scheme  \cite[page 343]{KlPE2011}, we have
	\begin{equation*}
		S_2\leq C\tilde{E}\|[R_{t_{r+1}}^{t_r}-\bar{R}_{t_{r+1}}^{t_r}]\bar{Q}_{t_{r+1}}^{t_r}p(t_r)\|^2\leq C\|\bar Q^{t_r}_{t_{r+1}}p(t_r)\|^2\kappa^4\leq C\|p(t_r)\|^2\kappa^4.
	\end{equation*}
Therefore, we have proved that 
	\begin{equation}\label{eq:2.6.2-1}
		\{\tilde{E}\|p(t_{r+1})-p_{\kappa,r+1}\|^2\}^{1/2}\leq C\|p(t_r)\|\kappa.
	\end{equation}

	Next, we estimate the truncation error in expectation. Notice that
	\begin{align*}
		&\|\tilde E(p(t_{r+1})-p_{\kappa,r+1})\|=
		\|\tilde{E}(p(t_{r+1})-\bar{\Gamma}_{t_{r+1}}^{t_r}\bar{R}_{t_{r+1}}^{t_r}
		\bar{Q}_{t_{r+1}}^{t_r}p(t_r))\|
		\\
		&\leq \|\tilde{E}(p(t_{r+1})-\Gamma_{t_{r+1}}^{t_r} R_{t_{r+1}}^{t_r}Q_{t_{r+1}}^{t_r}p(t_r))\| \\
		&~~+\|\tilde{E}(\Gamma_{t_{r+1}}^{t_r}
		R_{t_{r+1}}^{t_r}Q_{t_{r+1}}^{t_r}p(t_r)-\Gamma_{t_{r+1}}^{t_r}R_{t_{r+1}}^{t_r}\bar{Q}_{t_{r+1}}^{t_r}p(t_r))\|
		\\
		&~~+\|\tilde{E}(\Gamma_{t_{r+1}}^{t_r}R_{t_{r+1}}^{t_r}\bar{Q}_{t_{r+1}}^{t_r}p(t_r)-\Gamma_{t_{r+1}}^{t_r}\bar{R}_{t_{r+1}}^{t_r}\bar{Q}_{t_{r+1}}^{t_r}p(t_r))\|
		\\
		&~~+\|\tilde{E}(\Gamma_{t_{r+1}}^{t_r}\bar{R}_{t_{r+1}}^{t_r}\bar{Q}_{t_{r+1}}^{t_r}p(t_r)-\bar{\Gamma}_{t_{r+1}}^{t_r}\bar{R}_{t_{r+1}}^{t_r}\bar{Q}_{t_{r+1}}^{t_r}p(t_r))\|
		\\
		:=& S_0^{\prime}+S_1^{\prime} +S_2^{\prime}+S_3^{\prime}.
	\end{align*}
	From \eqref{eq:4.7} it follows that $S_0^{\prime}\leq C\|p(t_r)\|\kappa^2$.
	By \eqref{eq:2.6.1-1} and the convergence theory of Euler-Maruyama method in \cite{KaSh1995}, we have
	\begin{equation*}
		S_1^{\prime}\leq C\|\tilde{E}[Q_{t_{r+1}}^{t_r}-\bar{Q}_{t_{r+1}}^{t_r}]p(t_r)\|\leq C \|p(t_r)\|\kappa^2.
	\end{equation*}
	Similarly, we obtain
	$S_3^{\prime}\leq C\|p(t_r)\|\kappa^2$.

	From \cite[page 343]{KlPE2011}  and the convergence theory of implicit Euler scheme, we get
	\begin{align*}
		S_2^{\prime}&\leq C\|\tilde{E} [R_{t_{r+1}}^{t_{r}}-\bar{R}_{t_{r+1}}^{t_r}]
		\bar{Q}_{t_{r+1}}^{t_r}p(t_r)\|\leq C\tilde{E}\|\bar{Q}_{t_{r+1}}^{t_r}p(t_r)\|\kappa^2
		\leq C\|p(t_r)\|\kappa^2.
	\end{align*}
Summing the above estimates, we obtain 
	\begin{equation}\label{eq:2.6.5}
		\|\tilde{E}(p(t_{r+1})-\bar{\Gamma}_{t_{r+1}}^{t_r}\bar{R}_{t_{r+1}}^{t_r}\bar{Q}_{t_{r+1}}^{t_r}p(t_r))\|\leq C\|p(t_r)\|\kappa^2.
	\end{equation}
	By the estimates in \eqref{eq:2.6.2-1} and \eqref{eq:2.6.5},  and  Milstein's  theorem  \cite{MiGN}, we obtain \eqref{eq:2.6-1}. Therefore, the  proof is completed.
\end{proof}

\section{Numerical experiments}\label{sec5}
In this section, we apply our algorithm to a linear filtering model and a nonlinear filtering model to illustrate our theoretical results on error estimates.

We use  a spectral Galerkin method to discretize the spatial variable   with an  $n$-dimensional    subspace whose  basis functions
are given by 
\begin{equation*}
	e_{i}(x)=\sqrt{\frac{\phi(x)}{(i-1)!}}f_{i-1}(x),\quad x\in\mathbb{R},\quad i=1,2,\cdots n,
\end{equation*}
where $f_{i-1}(x)$ is a  Hermite polynomial of order $i-1$ and $\phi(x)=(2\pi)^{-1/2}e^{-x^{2}/2}$. Due to \cite{HeJa2013}, the spatial Galerkin discretization errors are expected to decrease   exponentially with respect to the dimension $n$. 
To proceed with numerical experiments, we need to simulate the sample trajectories of the Poisson process with density function $\lambda(X_t)$. For convenience, we assume the jump time  $\tau_i$
may occur only at $t_r$. To determine  the  jump time, we first calculate  a discrete sample trajectory $X_{t_r}$ $(r=0,1,\cdots, N)$ in terms of  equation \eqref{eq:2.1.1} and compute the density
function  $\lambda(X_{t_r})$. For any $i\leq r$, define  $\mathcal{T}_i(t_r):=\kappa \sum\limits_{j=i}^{r-1}\lambda(X_{t_j})$. Assuming $\tau_0=0$ and $\tau_{i}$ is the present jump time; we will describe a
criterion  for finding the next jump time $\tau_{i+1}$. Let  $\mathcal{E}$ be a random number  generated by  a  random variable with unit exponential distribution and  independent of $X_{t_r}$. Then the next jump time  $\tau_{i+1}$ is defined as
the first time that $\mathcal{T}_{i}(t_r)$ exceeds  $\mathcal{E}$.

{\bf Example 1.}
Consider a linear filtering model 
\begin{align*}
	&dX_t=0.5X_tdt+2 dw_t,\quad X_0\sim \mathcal{N}(5,0.01),
	\\
	&dY_t=X_tdt+0.5dw_t+dv_t,\quad Y_0=0,
	\\
	&Z_t~ \text{is a doubly  stochastic Poisson process with the intensity of} 
	~\lambda X_t^2.
\end{align*}
The corresponding Zakai equation reads
\begin{equation*}
	\begin{aligned}
		dp(t)=&\big(2 \frac{\partial^2p(t)}{\partial x^2}- \frac{1}{2} \frac{\partial p(t)}{\partial x}-\lambda x^2p(t)+p(t)\big)dt
		\\
		&+\big(-\frac{2\sqrt{5}}{5}\frac{\partial p(t)}{\partial x}
		+\frac{11}{5}\sqrt{5}xp(t)\big)dY_t
		+(\lambda x^2-1)p(t-)dZ_t.
	\end{aligned}
\end{equation*}

Taking a time stepsize  $\kappa=0.5\times 10^{-5}$ and  choosing $\lambda=3$, we trace a sample trajectory using the Zakai filter and depict it  in Fig.\ \ref{fig:1.6.1-1}.

\begin{figure}[H]
	\centering
	\includegraphics[width=8cm]{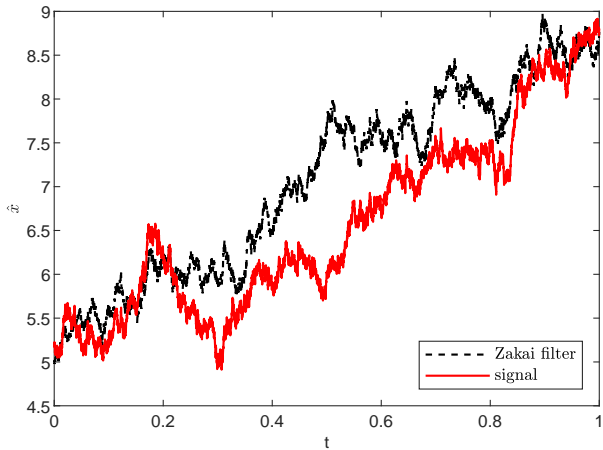}
	\caption{The Zakai filter and signal for $\lambda=3$.}
	\label{fig:1.6.1-1}
\end{figure}


In addition, we   simulate a sample path  for only continuous
observation $Y_t$ with $\lambda =0$  and  mixed observations of $Y_t$ and $Z_t$ with $\lambda=3$.  We trace their corresponding conditional standard deviation versus time $t$, see Fig.\ \ref{fig:1.6.2}.
 It shows that including  information on point process observation   reduces the conditional standard deviation.
\begin{figure}[H]
	\begin{center}
	\includegraphics[width=8cm]{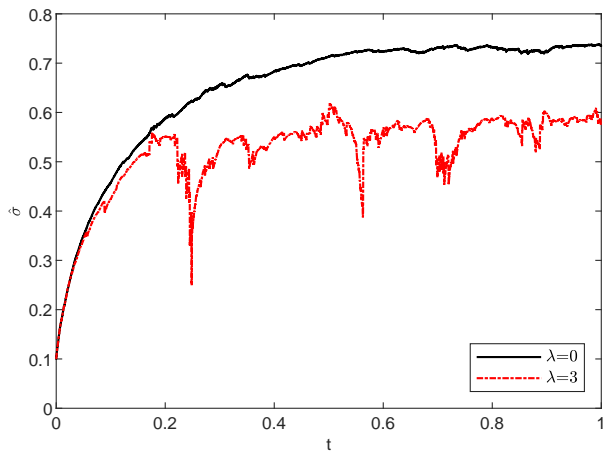}
	\caption{The conditional standard deviation}
	\label{fig:1.6.2}
    \end{center}
\end{figure}

We fix $T=0.25$, $\lambda=2$ and  a stepsize $\kappa=N^{-1}=2^{-20}$  and  then compute $m=500$ reference 'exact' sample
paths. Simultaneously we compute $m$ Zakai filter approximate solutions  for each stepsize $\kappa_i=N_i^{-1}=2^{-i}$ with $i=16, 17, 18, 19$.  
Define  an  error function
\begin{align*}
	d(\kappa_i)=\{\frac{1}{N_im}\sum_{r=1}^{N_i}\sum\limits_{j=1}^{m}
	\vert\hat{X}^{j}(t_r)-\hat{X}_r^j\vert^2\}^{1/2}.
\end{align*}
The dynamic behavior of the  errors as varying   stepsize $\kappa_i$ is exhibited in  Fig.\ \ref{fig:1.6.3}, demonstrating the half order convergence rate.
\begin{figure}[H]
	\centering
	\includegraphics[width=8cm]{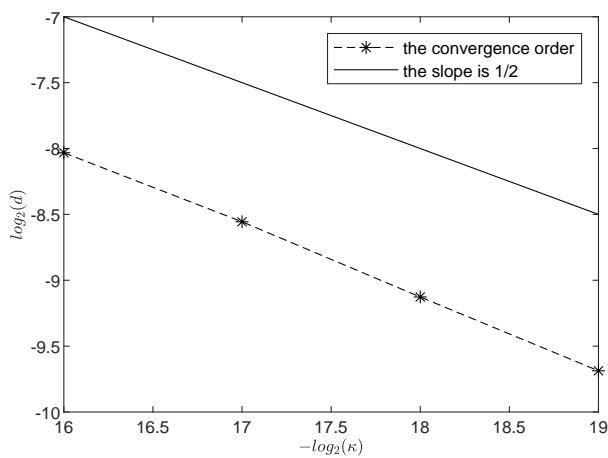}
	\caption{The convergence order of the splitting-up method.}
	\label{fig:1.6.3}
\end{figure}

{\bf Example 2.}  Consider a nonlinear filtering model 
\begin{align*}
	&dX_t=\sin(X_t)dt+2 dw_t,\quad X_0\sim \mathcal{N}(5,0.01),
	\\
	&dY_t=5.5X_tdt+0.5dw_t+dv_t,\quad Y_0=0,
	\\
	&Z_t~ \text{is a doubly  stochastic Poisson process with the intensity of} 
	~\lambda X_t^2.
\end{align*}
The corresponding Zakai equation is
\begin{align*}
	dp(t)=&(2 \frac{\partial^2 p(t)}{\partial x^2}-\sin(x) \frac{\partial p(t)}{\partial
		x}-\cos(x)p(t)-(\lambda x^2-1)p(t))dt\\
	&+(-\frac{2}{5}\sqrt{5}\frac{\partial p(t)}{\partial x}+\frac{11}{5}\sqrt{5}xp(t))dY_t+(\lambda x^2-1)p(t-)dZ_t.
\end{align*}

Set  $\lambda = 3$, $T =0.5$, $\kappa = 0.5\times 10^{-5}$. We  trace a sample signal trajectory using our Zakai filter and depict the approximations in  Fig.\
\ref{fig:1.6.4}.

\begin{figure}[H]
  \centering
\includegraphics[width=8cm]{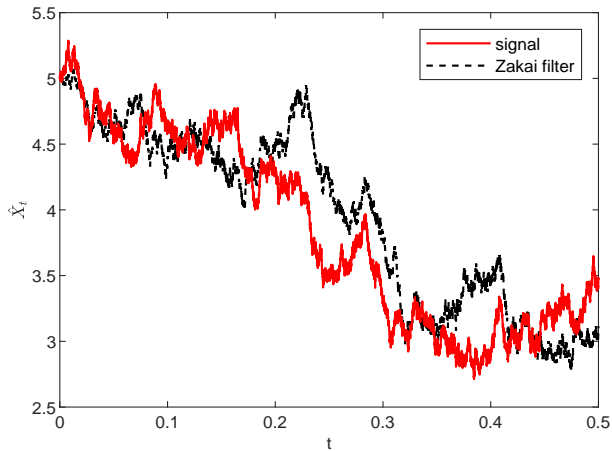}
\caption{The Zakai filter and signal process.}
\label{fig:1.6.4}
\end{figure}

Next, we verify the convergence order in a temporal variable. We compute $m=500$ reference ``exact" Zakai filter solutions by fixing  the stepsize $\kappa = 2^{-20}$. Then we calculate $m$ numerical 
Zakai filter solutions for each stepsize $\kappa_i=2^{-i}$, $i=14, 15, 16, 17$. We  plot the errors in $\log$-$\log$ scale, cf.\ Fig.\ \ref{fig:1.6.6}, which shows that  the convergence order  is of $\frac{1}{2}$. 
\begin{figure}[H]
\centering
\includegraphics[width=8cm]{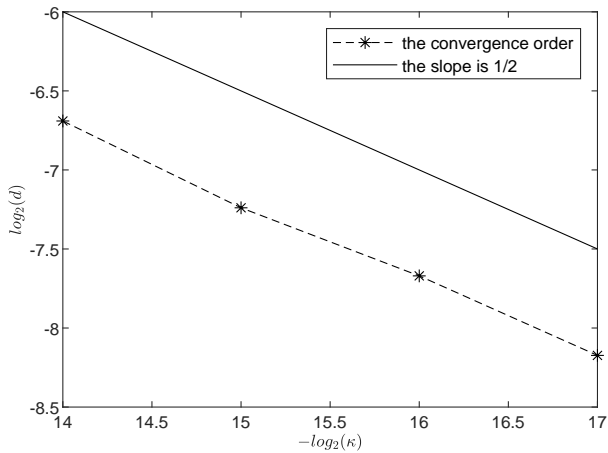}
\caption{The convergence order of time discretized method.}
\label{fig:1.6.6}
\end{figure}

\section{Conclusion and discussion}

In this paper, we considered a nonlinear filtering model  with observations involving a mixture of a Wiener process and a point process. After deriving the corresponding Zakai equation, we constructed the splitting-up scheme where the Zakai equation is decomposed into three equations: A deterministic PDE, an SDE driven by a Wiener process, and an SDE driven by a point process. Then we discretized these equations in the temporal direction by finite difference methods. 
By estimating the errors of these splitting up equations and the errors of the temporal discretization, we derived the half-order convergence result for the proposed numerical scheme using Milstein's fundamental theorem on numerical methods for SDEs. Our current work focuses on  semi-discretizations in time. Future research on this topic includes the construction and error estimates for fully discretized numerical schemes.

\bmhead{Acknowledgments}
The research was partly supported by the National Key R\&D Program (2020YFA0714101, 2020YFA0713601), NSFC (12171199, 11971198), Jilin Provincial Department of science and technology (20210201015GX).

\bibliography{ref}


\end{document}